\DeclareMathOperator{\Hom}{Hom}
\DeclareMathOperator{\tx}{\tilde{x}}
\DeclareMathOperator{\tit}{\tilde{t}}
\DeclareMathOperator{\disc}{disc}
\DeclareMathOperator{\Gal}{Gal}
\DeclareMathOperator{\End}{End}
\DeclareMathOperator{\Frob}{Frob}
\DeclareMathOperator{\Spec}{Spec}
\DeclareMathOperator{\Jac}{Jac}
\DeclareMathOperator{\HH}{H}
\DeclareMathOperator{\im}{im}
\DeclareMathOperator{\pr}{pr}
\DeclareMathOperator{\lspan}{span}
\theoremstyle{definition}
\newtheorem{definition}{Definition}[section]
\newtheorem{exmpl}[definition]{Example}
\newtheorem{rem}[definition]{Remark}
\theoremstyle{theorem}
\newtheorem{thm}[definition]{Theorem}
\newtheorem{cor}[definition]{Corollary}
\newtheorem{lem}[definition]{Lemma}
\newtheorem{prop}[definition]{Proposition}
\newenvironment{customthm}[1]
  {\innercustomthm}
  {\endinnercustomthm}
\begin{document}

\title[On the non-Transversality \ldots]{On the non-Transversality of the Hyperelliptic Locus and the Supersingular Locus for $g=3$}
\begin{abstract}
This paper gives a criterion for a moduli point to be a point of non-transversal intersection of the hyperelliptic locus and the supersingular locus in the Siegel moduli stack $\mathfrak{A}_3 \times \mathbb{F}_p$. It is shown that for infinitely many primes $p$ there exists such a point.
\end{abstract}
\author[Pieper]{Andreas Pieper}
\address{
  Andreas Pieper,
  Universität Duisburg-Essen
}
\email{andreas.pieper@uni-due.de}

\maketitle
\tableofcontents

\section{Introduction}
Let $k$ be an algebraically closed field of characteristic $p>2$. Denote by $\mathfrak{A}_g$ the Siegel moduli space parametrizing $g$-dimensional principally polarized abelian varieties over $k$ (this notation is going to be used throughout the article). The study of the intersection $\mathfrak{H}_3\cap \mathcal{S}_3$ of the locus $\mathfrak{H}_3\subset \mathfrak{A}_3$ of Jacobians of smooth hyperelliptic curves and the supersingular locus $\mathcal{S}_3\subset \mathfrak{A}_3$ was initiated by Oort's seminal article \cite{OortHyp}. He showed that this intersection is equidimensional of dimension $1$. The interest in this particular situation arises because it is one of the simplest instances of the series of difficult questions around the intersection of Newton polygon strata and loci in $\mathfrak{A}_g$ defined by Jacobians of certain curves, e.g. the Torelli locus. We recommend the recent survey article by Pries \cite{Pries} and the references therein for the readers interested in this circle of ideas.
\par As Pries observes in her survey, for each prime $p$ the Torelli locus and the supersingular locus intersect for infinitely many values of $g$. But this means that the intersection is non-transversal for infinitely many $g$ because the expected dimension of the intersection is
$$\dim(\mathfrak{A}_g)- \dim(\mathcal{M}_g)- \dim(\mathcal{S}_g)=\frac{g(g+1)}{2} -(3g-3)- \left\lfloor \frac{g^2}{4} \right\rfloor$$
which is negative if $g\geqslant 9$.
\par A natural question is: What is the smallest $g$ for which a Newton polygon stratum intersects a locus defined via curves non-transversally? The answer came quite as a surprise to the author: Already the simplest unexplored case, the $g=3$ hyperelliptic locus and the supersingular Newton stratum, exhibits this phenomenon. At the other extreme, for $g=2$ the only reasonable locus would be $\mathfrak{A}_2^\text{dec}=\im (\mathfrak{A}_1\times \mathfrak{A_1} \rightarrow \mathfrak{A}_2)$ (decomposable principally polarized abelian varieties) and Moret-Bailly \cite[2.5.1 Lemme]{Moret-Bailly} proved that it intersects the supersingular locus transversally.
\par Just to clear things up, we will view $\mathfrak{A}_3$ as a stack, in order to avoid complications with the quotient singularities appearing on the coarse moduli space. The reader who does not want to stick to stacks may instead assume that we add a level-$N$ structure, $N>2,\, p\nmid N,$ making the moduli functor representable by a scheme.
\par The main result of this article looks at smooth hyperelliptic supersingular genus $3$ curve $C$ and a smooth\footnote{Corollary \ref{CorIrredComplSm} describes which irreducible components of the formal neighborhood are smooth} irreducible component $\mathcal{W}$ of the formal neighborhood of the moduli point $[\Jac(C)]$ in $\mathcal{S}_3$. Our theorem will give a necessary and sufficient condition for when $\mathcal{W}$ meets $\mathfrak{H}_3$ non-transversally.
\par To $C$, and $\mathcal{W}$ we will associate a certain geometric configuration in $\mathbb{P}^2$, more precisely: We define a conic $Q$, a line $l$ and a point $P$ on $l$. The geometric origin is the following: $Q$ is the image of the canonical map ${C\rightarrow \mathbb{P}^2}$ of our hyperelliptic curve. $P$ and $l$ will be defined (see the discussion following Definition \ref{DefFil}) via the PFTQ theory of Li and Oort \cite{Li-Oort}. The theorem (Theorem \ref{ThmTransGen} in the main body) is as follows:
\begin{customthm}{A}\label{ThmIntroMain}
The following are equivalent:
\begin{enumerate}
\item[i)] The component $\mathcal{W}$ meets $\mathfrak{H}_3$ non-transversally at $[\Jac(C)]$.
\item[ii)] The line $l$ touches $Q$ at the point $P$.
\end{enumerate}
\end{customthm}
\par In the special case where the $a$-number is one, the theorem simplifies as then the irreducible component $\mathcal{W}$ of $\mathcal{S}_3$ passing through $[\Jac(C)]$ will be unique. Furthermore, the point $P$ and the line $l$ have a convenient description in terms of the Cartier-Manin matrix of $C$. Then Theorem A becomes (see Theorem \ref{ThmTransa1}):
\begin{customthm}{B}\label{ThmIntroMaina1}
Let $C$ be a hyperelliptic supersingular curve of genus $3$ with $a(\Jac(C))=1$. Then the following are equivalent:
\begin{enumerate}
\item[i)] The supersingular locus $\mathcal{S}_3$ meets $\mathfrak{H}_3$ non-transversally at $[\Jac(C)]$.
\item[ii)] There exists a point $P\in C$ such that for one (equivalently: for all) hyperelliptic equation
$$y^2=f(x)$$
of $C$ that satisfies $x(P)=0$ the corresponding Cartier-Manin matrix has the shape
$$\begin{pmatrix}
0 & 0 & 0\\
\ast & 0 & 0\\
\ast & \ast & 0
\end{pmatrix}$$
\end{enumerate}
\end{customthm}
Since the Cartier-Manin matrix is defined to be
$$
M=\begin{pmatrix}
c_{ip-j}
\end{pmatrix}_{i,j=1,\ldots, g}
$$
where the $c_i$ are the coefficients obtained by expanding $f(x)^{\frac{p-1}{2}}= \sum c_i x^i$, the criterion of Theorem \ref{ThmIntroMaina1} is very explicit. As a consequence, we get the following na\"\i ve algorithm for constructing examples: Start by writing down the general genus $3$ hyperelliptic curve where you consider the coefficients $a_i$ as indeterminates:
$$y^2=x^7+ \sum_{i=0}^6 a_i x^i\,.$$
Notice that we can move one Weierstrass point to infinity without losing generality. Then, for fixed $p$, the entries of the Cartier-Manin matrix are polynomials in the $a_i$. Writing down the condition that the Cartier-Manin matrix has the shape as in Theorem \ref{ThmIntroMaina1} ii) leads to an algebraic system of equations in the $a_i$ whose solutions give us candidate curves. After testing the supersingularity and $a(\Jac(C))=1$ (note that $a(\Jac(C))=\dim(\ker(M))$) we are left with curves satisfying the condition of Theorem \ref{ThmIntroMaina1}. Using this technique one quickly finds examples such as the curve $y^2=x^7 + x^5 + 7x^3 + x$ in characteristic $p=11$ or the curve $y^2=x^7 + x^6 + 9x^5 + 5x^4 + 6x^3 + x^2 + 16x + 1$ in characteristic $p=19$. This computation was carried out using \texttt{SAGE} \cite{Sage}. The source code is available on the author's webpage \cite{Source}.
\par To get more general examples we will look at CM-reductions. For this purpose we take the curve over $\mathbb{Q}$
$$C: y^2 = x^7 + 7x^5 + 14x^3 + 7x$$
which has CM by $\mathbb{Q}(\zeta_7+\zeta_7^{-1}, i)$ according to the work of van Geemen, Koike, and Weng \cite[Example 2.1]{GeeKoiWe}. The reduction of $C$ at every prime satisfying the congruences $p\equiv \pm 2 \mod 7,\, p \equiv 3 \mod 4$ satisfies the conditions of Theorem \ref{ThmIntroMaina1} (see Proposition \ref{PropTouch} and Theorem \ref{ThmCMRed} in the main body of the text) . As a consequence we get a moduli point of non-transversal intersection for infinitely many primes. One might wonder whether such a moduli point actually exists for all primes $p\gg 0$; this question is left open.
\par The structure of the article is as follows: Section 2 recalls facts from the literature used in the present paper, mostly about Dieudonn\'e theory and the results of Li \& Oort \cite{Li-Oort} regarding the supersingular locus. In Section 3 we use this theory to describe the variation of cristalline cohomology of the universal $3$-dimensional principally polarized supersingular abelian scheme. This leads to a description of the tangent space to the supersingular locus in $\mathfrak{A}_3$. In Section 4 we deduce Theorems \ref{ThmIntroMain} and \ref{ThmIntroMaina1} from this description. The last section is devoted to the discussion of the CM-example.
\par \textbf{Acknowledgments:} The author thanks Irene Bouw, David Lubicz, Laurent Moret-Bailly, Mathieu Romagny, Jeroen Sijsling, and Stefan Wewers for enlightening and encouraging discussions on the subject of this article, and Rachel Pries for a helpful E-mail correspondence. He is also indebted to Ulrich G\"ortz and the anonymous referee(s) for their constructive feedback which lead to improvements of the manuscript.
\section{Preliminaries}
\subsection{Hyperelliptic curves}
Let us denote by $\mathfrak{H}_g$ the moduli stack of hyperelliptic genus $g$ curves over a field of characteristic $\neq 2$. Furthermore we look at the Torelli morphism
$$t: \mathfrak{H}_g \longrightarrow \mathfrak{A}_g,\, [C] \mapsto [(\Jac(C), \Theta)]\,.$$
The map induced by $t$ on tangent spaces has the following properties:
\begin{lem}\label{LemHypDefo}
Let $C$ be a hyperelliptic curve. The map $t$ induces a map on tangent spaces
$$T_{[C]} \mathfrak{H}_g \longrightarrow T_{\Jac(C)} \mathfrak{A}_g\cong S^2 \HH^0(C, \Omega_C)^\vee$$
which
\begin{enumerate}
\item[i)] is injective
\item[ii)] and its image equals the orthogonal complement of the kernel of the multiplication map
$$S^2 \HH^0(C, \Omega_C)\longrightarrow \HH^0(C, \Omega_C^{\otimes 2})\,.$$
\end{enumerate}
\end{lem}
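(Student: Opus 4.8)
The plan is to deduce both assertions from the well-known description of the differential of the Torelli morphism on the full moduli stack of curves, together with bookkeeping for the hyperelliptic involution. First I will factor $t$ through the moduli stack $\mathfrak M_g$ of smooth proper genus-$g$ curves as $\mathfrak H_g\hookrightarrow\mathfrak M_g\xrightarrow{\,t_{\mathfrak M_g}\,}\mathfrak A_g$, the first arrow being a closed immersion for $g\geqslant 2$. Under the Kodaira--Spencer identification $T_{[C]}\mathfrak M_g\cong\HH^1(C,T_C)$, Serre duality $\HH^1(C,T_C)\cong\HH^0(C,\Omega_C^{\otimes2})^\vee$, and the identification $T_{\Jac(C)}\mathfrak A_g\cong S^2\HH^0(C,\Omega_C)^\vee$ from the statement, the differential of $t_{\mathfrak M_g}$ is the transpose $\mu^\vee$ of the multiplication (``cup-product'') map $\mu\colon S^2\HH^0(C,\Omega_C)\to\HH^0(C,\Omega_C^{\otimes2})$; this is classical (see Oort--Steenbrink or Arbarello--Cornalba--Griffiths--Harris) and remains valid for $p>2$ and on the level of stacks. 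In particular $\im(dt_{\mathfrak M_g})=(\ker\mu)^\perp$ and $\ker(dt_{\mathfrak M_g})=(\im\mu)^\perp$.

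Next I will bring in the hyperelliptic involution $\iota$ of $C$, which acts compatibly on all the spaces above; since Serre duality is $\iota$-equivariant, the $(\pm1)$-eigenspaces of $\HH^1(C,T_C)$ pair nondegenerately with those of $\HH^0(C,\Omega_C^{\otimes2})$. Because $p>2$, equivariant deformation theory identifies the (injective) map $T_{[C]}\mathfrak H_g\to\HH^1(C,T_C)$ with the inclusion of the invariant part $\HH^1(C,T_C)^{\iota=+1}$: a first-order deformation of $C$ carries a compatible involution exactly when it is $\iota$-invariant, and any such deformation is again hyperelliptic, since its quotient by the lifted involution is a deformation of $\mathbb P^1$, hence trivial. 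On the other hand $\HH^0(C,\Omega_C)$ lies entirely in the $(-1)$-eigenspace of $\iota$ (for a hyperelliptic equation $y^2=f(x)$ a basis is $x^{i-1}\,dx/y$, $i=1,\dots,g$), so $S^2\HH^0(C,\Omega_C)$ and its dual are $\iota$-invariant and $\im\mu\subseteq\HH^0(C,\Omega_C^{\otimes2})^{\iota=+1}$.

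Finally I will conclude. For ii): the induced tangent map $dt$ is the restriction of $\mu^\vee$ to $\HH^1(C,T_C)^{\iota=+1}$, so $\im(dt)=\mu^\vee(\HH^1(C,T_C)^{\iota=+1})$; and since $\im\mu$ sits in the $(+1)$-eigenspace of $\HH^0(C,\Omega_C^{\otimes2})$, which pairs trivially with $\HH^1(C,T_C)^{\iota=-1}$, the map $\mu^\vee$ kills $\HH^1(C,T_C)^{\iota=-1}$, whence $\im(dt)=\mu^\vee(\HH^1(C,T_C))=(\ker\mu)^\perp$, which is exactly ii). For i): by ii) we get $\dim\im(dt)=\dim(\ker\mu)^\perp=\dim\im\mu$, and $\im\mu$ is spanned by the $2g-1$ linearly independent quadratic differentials $x^m(dx/y)^{\otimes2}$ with $0\leqslant m\leqslant 2g-2$; since the hyperelliptic locus has dimension $2g-1$ as well, $dt$ must be injective. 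The main obstacle --- really the only non-formal input --- will be pinning down the identification of $dt_{\mathfrak M_g}$ with the multiplication map under the stated duality isomorphisms, with attention to the characteristic and the stacky setting; these cause no trouble, since $2$ is invertible (so $S^2$ coincides with the symmetric-tensor functor) and the assertion ultimately reduces to cup-product computations. The eigenspace bookkeeping and the count $\dim\im\mu=2g-1$ are then routine.
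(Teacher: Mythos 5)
Your argument is correct. Note, though, that the paper does not prove the lemma at all: it simply cites \cite[p.~223]{ACGII} and remarks that the argument there goes through in characteristic $\neq 2$. What you have written out is, in substance, that classical argument: the differential of the Torelli map on $\mathfrak{M}_g$ is the transpose of the multiplication map $\mu\colon S^2\HH^0(C,\Omega_C)\to\HH^0(C,\Omega_C^{\otimes 2})$ (Oort--Steenbrink in positive characteristic), the tangent space to $\mathfrak{H}_g$ is the $\iota$-invariant part $\HH^1(C,T_C)^{\iota}$ by equivariant deformation theory (using $p>2$), and the $\iota$-eigenspace bookkeeping under Serre duality gives $\im(dt)=\im(\mu^\vee)=(\ker\mu)^\perp$, with injectivity by a rank count. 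So your route is not really different from the one the paper points to; its value is that it makes the citation self-contained and verifies explicitly where $\cha(k)\neq 2$ enters (lifting the involution to first-order deformations, and $S^2$ versus divided powers). One small point you should make explicit rather than infer from ``the hyperelliptic locus has dimension $2g-1$'': the injectivity step needs $\dim T_{[C]}\mathfrak{H}_g=\dim \HH^1(C,T_C)^{\iota}=2g-1$, which follows from the standard identification $\HH^1(C,T_C)^{\iota}\cong \HH^1\bigl(\mathbb{P}^1,T_{\mathbb{P}^1}(-B)\bigr)$ with $B$ the branch divisor of degree $2g+2$; together with your computation $\dim\im\mu=2g-1$ this closes the dimension count. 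With that spelled out, the proof is complete.
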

\begin{proof}
See \cite[p. 223]{ACGII}. The authors of that book assume that the base field is $\mathbb{C}$, however, it is not difficult to see that their proof works over any field of characteristic $\neq 2$.
\end{proof}
The preceding lemma and the Torelli theorem together imply that the map $t:\mathfrak{H}_g \rightarrow \mathfrak{A}_g$ is a locally closed immersion. By abuse of notation we will identify $\mathfrak{H}_g$ with its image in $\mathfrak{A}_g$.
\subsection{Dieudonn\'e modules}\label{SecDieu}
For the rest of the paper we fix the following notation: $k$ is an algebraically closed field of characteristic $p>0$. In \cite{BM}\cite{deJong} (relative) Dieudonn\'e modules are defined. The purpose of this subsection is to review these defininitions and the important theorems. Indeed, let $A$ be a regular local ring essentially of finite type over $k$. Berthelot \& Messing and de Jong consider more general rings, but these assumptions suffice for our purposes.
\par Now \cite[Proposition 1.1.7]{BM} implies the existence of a lift of $A$, i.e., a flat $\mathbb{Z}_p$-algebra $\tilde{A}$ which is $p$-adically complete and satisfies $\tilde{A}/p\cong A$. Furthermore, there exists a lift of Frobenius $\sigma: \tilde{A} \longrightarrow \tilde{A}$ by \cite[Corollaire 1.2.7]{BM}. Notice that $\tilde{A}$ is unique up to isomorphism; however, $\sigma$ is not unique. We choose $\sigma$ once and for all.
\par We consider the continuous differential forms
$$\hat{\Omega}_{\tilde{A}}=\varprojlim_n \Omega_{A_n/\mathbb{Z}_p}$$
where $A_n=\tilde{A}/p^n$. By \cite[Proposition 1.3.1]{BM}, $\hat{\Omega}_{\tilde{A}}$ is a free $\tilde{A}$-module of rank $d=\dim(A)$.
\par Now we give the following definition, taken from \cite[Definition 2.3.4]{deJong}:
\begin{definition}
A \emph{Dieudonn\'e module over $\tilde{A}$} is a finite locally free $\tilde{A}$-module $M$ equipped with an integrable, topologically quasi-nilpotent connection
$$\nabla: M\rightarrow M \hat{\otimes}_{\tilde{A}} \hat{\Omega}_{\tilde{A}} $$
and $\nabla$-horizontal, $\tilde{A}$-linear maps
$$F: M\hat{\otimes}_{\tilde{A}, \sigma} \tilde{A} \longrightarrow M,\, V:M\longrightarrow M\hat{\otimes}_{\tilde{A}, \sigma} \tilde{A}$$
satisfying $F\circ V=V\circ F=p$.
\end{definition}
Here topologically quasi-nilpotent means: For any derivation $\delta\in \Hom_{\tilde{A}}(\hat{\Omega}_{\tilde{A}}, \tilde{A})$ and any $m\in M$, there exists an $n\in \mathbb{N}$ such that $\nabla^{\circ n}_\delta(m)\in pM$.
\par We now recall the following theorem:
\begin{thm}
There is an exact contravariant functor
$$\mathbb{D}: \left\lbrace  p\text{-divisible groups over}\phantom{\tilde{A}} \Spec(A)\right\rbrace \longrightarrow \left\lbrace \text{Dieudonn\'e modules over } \tilde{A}\right\rbrace$$ 
\end{thm}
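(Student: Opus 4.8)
The functor $\mathbb{D}$ is the crystalline Dieudonné functor, and the plan is to construct it by evaluating the Dieudonné crystal of a $p$-divisible group on the canonical divided power thickenings $\Spec(A)\hookrightarrow\Spec(A_n)$, passing to the inverse limit, and then reading off the extra structure $(\nabla, F, V)$ and the exactness from the general theory of Berthelot–Messing \cite{BM} and de Jong \cite{deJong}. So the "proof" is really an unwinding of what has to be checked, with the substantive inputs cited from those sources.

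Concretely, I would proceed as follows. To a $p$-divisible group $G$ over $\Spec(A)$ one first attaches its Dieudonné crystal, a crystal in finite locally free modules on the crystalline site of $\Spec(A)$ over $\mathbb{Z}_p$ whose rank equals the height of $G$; this is the content of \cite{BM} (and is the starting point of \cite{deJong}). Since $A_n=\tilde{A}/p^n$ and the ideal $pA_n$ carries the canonical (locally nilpotent) divided powers inherited from $\mathbb{Z}/p^n\mathbb{Z}$, each closed immersion $\Spec(A)\hookrightarrow\Spec(A_n)$ is an object of the crystalline site, and evaluating the crystal there yields a finite locally free $A_n$-module $\mathbb{D}(G)_{A_n}$, compatibly in $n$; one sets $M:=\varprojlim_n\mathbb{D}(G)_{A_n}$, which is finite locally free over the $p$-adically complete Noetherian ring $\tilde{A}$ since the transition maps are reductions of a projective module of constant rank. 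The connection $\nabla$ comes from the crystal structure (equivalently, from the HPD-stratification on $M$ obtained by comparing the evaluations along the two projections of the divided-power envelope of the diagonal), it is automatically integrable, and its topological quasi-nilpotence is exactly the translation of the fact that the stratification is quasi-nilpotent, i.e. that the crystal lives on the nilpotent crystalline site. For $F$ and $V$ one uses the relative Frobenius $G\to G^{(p)}$ and the Verschiebung $G^{(p)}\to G$: contravariant functoriality of the Dieudonné crystal produces maps between $M$ and the evaluation of $\mathbb{D}(G^{(p)})$, and the chosen Frobenius lift $\sigma\colon\tilde{A}\to\tilde{A}$ together with the base-change compatibility of the Dieudonné crystal identifies $\mathbb{D}(G^{(p)})_{\tilde{A}}$ with $M\hat{\otimes}_{\tilde{A},\sigma}\tilde{A}$, giving the desired $F$ and $V$. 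The relations $F\circ V=V\circ F=p$ are inherited from $F_{G}V_{G}=p=V_{G}F_{G}$ on $G$, and the $\nabla$-horizontality of $F$ and $V$ reflects that both arise from morphisms of crystals.

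Exactness of $\mathbb{D}$ is then the statement that a short exact sequence $0\to G'\to G\to G''\to 0$ of $p$-divisible groups induces a short exact sequence of Dieudonné crystals (this is part of \cite{BM}); since all three crystals are locally free, evaluating on the thickenings $\Spec(A)\hookrightarrow\Spec(A_n)$ and passing to the $p$-adic limit preserves exactness, so $0\to\mathbb{D}(G'')\to\mathbb{D}(G)\to\mathbb{D}(G')\to 0$ is exact. The part I expect to be the real obstacle — and the part I would invoke rather than reprove — is precisely the Frobenius base-change identification $\mathbb{D}(G^{(p)})_{\tilde{A}}\cong M\hat{\otimes}_{\tilde{A},\sigma}\tilde{A}$ together with the quasi-nilpotence of the connection produced in the limit; these are exactly where the crystalline Dieudonné theory of \cite{BM} and \cite{deJong} does the hard work, and for the purposes of this paper it suffices to cite the main results there.
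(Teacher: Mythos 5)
Your proposal is correct and follows essentially the same route as the paper, which simply cites the crystalline Dieudonné theory (Berthelot--Breen--Messing, Proposition 1.3.3) for this statement. Your write-up just unwinds that citation — evaluating the Dieudonné crystal on the thickenings $\Spec(A)\hookrightarrow\Spec(A_n)$, extracting $\nabla, F, V$ via the stratification and the Frobenius lift $\sigma$, and quoting exactness and quasi-nilpotence from the same sources — so the substantive inputs are identical.
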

\begin{proof}
Follows from \cite[Proposition 1.3.3]{BBM}.
\end{proof}
As a special case, one recovers Dieudonn\'e theory over $k$. We will denote the functor
$$\left\lbrace  p\text{-divisible groups over}\phantom{\tilde{A}} \Spec(k)\right\rbrace \longrightarrow \left\lbrace \text{Dieudonn\'e modules over } W(k)\right\rbrace$$ 
by $D$. Throughout this paper we will only use contravariant Dieudonn\'e theory.
\subsection{Polarized flag type quotients}
In this section we recall the results from \cite{Li-Oort} we shall need. We specialize to the case $g=3$, although Li \& Oort treat the case of general $g$ and our techniques would generalize. However, for $g\geqslant 4$ explicit computations with the Dieudonn\'e modules in Li \& Oort's theory get more and more cumbersome as $g$ increases. For that reason we restrict ourselves to $g=3$ in the present paper. The author will revisit the case of $g\geqslant 4$ in the future.
\par Choose once and for all $E/k$, a supersingular elliptic curve defined over $\mathbb{F}_p$ such that the relative Frobenius satisfies the equation $F^2+p=0$. The theory of Li \& Oort gives a bijection between the set of irreducible components of $\mathcal{S}_3$ and polarizations $\eta$ on $E^3$ satisfying $\ker(\eta)=E^3[p]$. The bijection is obtained by writing down families of certain principally polarized quotients of $(E^3, \eta)$, so-called polarized flag-type quotients. The definition is as follows:
\begin{definition}\label{DefPFTQ}
Let $\eta$ be as in the discussion preceding the definition. Let $S$ be a $k$-scheme. A \emph{polarized flag-type quotient} (PFTQ for short) starting in $(E^3,\, \eta)$ is a sequence of isogenies of abelian $S$-schemes
$$E^3\times_k S= \mathcal{Y}_2 \stackrel{\rho_2}{\longrightarrow} \mathcal{Y}_1 \stackrel{\rho_1}{\longrightarrow} \mathcal{Y}_0$$ such that:
\begin{enumerate}
\item[i)] $\ker(\rho_i)$ is an $\alpha$-group scheme of rank $i$, i.e., \'etale-locally isomorphic to $\alpha_p^i$.
\item[ii)] For $i=0,1$ the polarization $\eta$ descends along the $\rho_i$ to polarizations on $\mathcal{Y}_i$ denoted $\eta_i$.
\item[iii)] $\ker(\eta_1) \subset \mathcal{Y}_1[F]$.
\end{enumerate}
\par Two PFTQs
$$E^3\times_k S= \mathcal{Y}_2 \stackrel{\rho_2}{\longrightarrow} \mathcal{Y}_1 \stackrel{\rho_1}{\longrightarrow} \mathcal{Y}_0$$
$$E^3\times_k S= \mathcal{Y}'_2 \stackrel{\rho'_2}{\longrightarrow} \mathcal{Y}'_1 \stackrel{\rho'_1}{\longrightarrow} \mathcal{Y}'_0$$
are said to be isomorphic if there exist isomorphisms $\varphi_i: \mathcal{Y}_i \longrightarrow \mathcal{Y}'_i$ with $i=0,1$ such that
$$\begin{tikzcd}
\mathcal{Y}_2 \arrow[d, equal]\arrow[r, "\rho_2"]&\mathcal{Y}_1 \arrow[d, "\varphi_1"]\arrow[r, "\rho_1"]& \mathcal{Y}_0\arrow[d, "\varphi_0"]\\
\mathcal{Y}'_2 \arrow[r, "\rho'_2"]& \mathcal{Y}'_1 \arrow[r, "\rho'_1"]& \mathcal{Y}'_0
\end{tikzcd}$$
commutes.
\end{definition}
Li \& Oort proved the following result:
\begin{thm}
Let $\eta$ be as above. The functor:
$$(k-\text{Sch})^\text{op}\rightarrow \text{Sets}$$
$$S\mapsto \left\lbrace\text{ PFTQs starting in  } (E^3,\, \eta) \right\rbrace/ \text{iso.}$$
is representable by a smooth projective integral $k$-scheme, denoted $\mathcal{P}_{3, \eta}$.
\par More precisely $\mathcal{P}_{3, \eta}$ admits the following description:
\begin{enumerate}
\item[i)] There is a map $\pi: \mathcal{P}_{3, \eta}\longrightarrow \mathbb{P}^2$ (induced from forgetting $\mathcal{Y}_0$).
\item[ii)] The image of $\pi$ is $\im(\pi)=\mathcal{V}(X_0^{p+1}+X_1^{p+1}+X_2^{p+1})$, i.e., the Hermitian curve. We will denote it by $\mathfrak{C}_H$.
\item[iii)] The map $\pi$ is a $\mathbb{P}^1$-bundle. More precisely, there is an isomorphism
$$\mathcal{P}_{3, \eta} \cong \mathbb{P}_{\mathfrak{C}_H}(\mathcal{O}(1)\oplus \mathcal{O}(-1))$$
\end{enumerate}
\end{thm}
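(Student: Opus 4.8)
The plan is to reconstruct Li \& Oort's argument, whose engine is that the $p$-divisible group of $E^3$ is superspecial, so that after applying contravariant Dieudonné theory every $\alpha$-group scheme occurring in a PFTQ is governed by linear algebra. Write $M := D(E^3) = D(E)^{\oplus 3}$, a free $W(k)$-module of rank $6$; since $F^2=-p$ on $D(E)$ one has $V=-F$ there, hence $FM=VM$ and $M/FM\cong k^{\oplus 3}$, so $E^3[F]=E^3[V]\cong\alpha_p^3$ with $D(E^3[F])=M/FM$. I would also fix a splitting $M=\langle e_1,e_2,e_3\rangle\oplus\langle f_1,f_2,f_3\rangle$ with $f_i=Fe_i$, $Ff_i=-pe_i$ (possible since $F^2=-p$). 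Because $\mathcal{Y}_2=E^3\times_k S$ is literally part of the datum, a PFTQ over $S$ is the same as a pair of finite locally free subgroup schemes $\ker(\rho_2)\subseteq(E^3\times S)[F]\cong\alpha_p^3\times S$ of order $p^2$ and $\ker(\rho_1)\subseteq\mathcal{Y}_1[F]$ of order $p$, subject to ii) and iii). Subgroup schemes of a fixed finite locally free group scheme are parametrized by a projective scheme, here a relative Grassmannian: $\ker(\rho_2)$ is a $2$-dimensional locally free quotient of $D((E^3\times S)[F])=\mathcal{O}_S^{\oplus 3}$, i.e.\ an $S$-point of $\mathbb{P}^2:=\mathbb{P}(D(E^3[F]))$, and this is $\pi$. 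Over this $\mathbb{P}^2$ one has the universal $\mathcal{Y}_1$ with its $\mathcal{Y}_1[F]$, the choices of $\ker(\rho_1)$ form a projective bundle, and conditions ii), iii) together with ``$\eta_0$ is principal'' cut out a closed subscheme — all computed by relative Dieudonné theory, which here is just linear algebra over $\mathcal{O}_S$ because every group scheme in sight is killed by $F$. This gives representability by a projective $k$-scheme inside an explicit projective bundle; it remains to identify it.

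\emph{The Hermitian equation (the heart).} Fix $S=\Spec(k)$. A point of $\mathbb{P}^2$ is a line $\ell\subseteq M/FM$; choose a lift $v=a_1e_1+a_2e_2+a_3e_3\in M$ of a generator ($\bar a_i\in k$ not all zero) and put $N:=FM+W(k)v$, so that $N=D(\mathcal{Y}_1)$. Because $\ker(\eta)=E^3[p]$, all elementary divisors of the Dieudonné-module map attached to $\eta$ equal $p$, so it is $p$ times an invertible, $F$- and $V$-equivariant endomorphism of $M$; for the computation one may take $\eta=p\lambda_0$ with $\lambda_0$ the product principal polarization, so that this endomorphism is the identity for the standard symplectic basis and $D(\ker\eta_1)=N/pN^{\ast}$, $N^{\ast}$ the dual lattice of $N$. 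Condition iii) then reads $FN\subseteq pN^{\ast}$; writing this out with the basis $\{e_i,f_i\}$, using $Fe_i=f_i,\ Ff_i=-pe_i$ and the Witt Frobenius on $(a_1,a_2,a_3)$, it collapses to the single congruence $\bar a_1^{\,p+1}+\bar a_2^{\,p+1}+\bar a_3^{\,p+1}=0$, so $\pi$ takes values in and surjects onto $\mathfrak{C}_H=\mathcal{V}(X_0^{p+1}+X_1^{p+1}+X_2^{p+1})$. The identical computation with $V$ in place of $F$ shows $VN\subseteq pN^{\ast}$ over $\mathfrak{C}_H$ as well, hence there $\ker\eta_1$ is an $\alpha$-group; consequently $\ker(\rho_1)$ may be \emph{any} order-$p$ subgroup of $\ker\eta_1$, the descended $\eta_0$ is automatically a principal polarization by a degree count, and the fibre of $\pi$ over a point of $\mathfrak{C}_H$ is exactly $\mathbb{P}(D(\ker\eta_1))\cong\mathbb{P}^1$. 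For an arbitrary admissible $\eta$ one reduces to the case $\eta=p\lambda_0$: the endomorphism above lies in $M_3(\End(E)\otimes\mathbb{Z}_p)$ and reduces modulo the maximal ideal of that maximal quaternion order (residue field $\mathbb{F}_{p^2}$) to a non-degenerate Hermitian form on a $3$-dimensional $\mathbb{F}_{p^2}$-space, which is unique up to equivalence, so the curve is unchanged.

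\emph{Identification of the bundle and conclusion.} We have $\mathcal{P}_{3,\eta}\cong\mathbb{P}_{\mathfrak{C}_H}(\mathcal{F})$ with $\mathcal{F}$ the bundle of fibres $N/pN^{\ast}$, locally free of rank $2$ since its fibre dimension is constant on the smooth curve $\mathfrak{C}_H$. The image of $FM$ in $\mathcal{F}$ is a sub-line-bundle $\mathcal{L}$, and the images of the lifts $v=\sum a_ie_i$ (well-defined once the $e/f$-splitting of $M=D(E)^{\oplus 3}$ is fixed) span a second sub-line-bundle $\mathcal{L}'$ which is a complement to $\mathcal{L}$; checking transition functions on the charts $\{X_i\neq 0\}$ identifies $\mathcal{L}'$ with the restriction of the tautological line $\mathcal{O}_{\mathbb{P}^2}(-1)$ and $\mathcal{L}$ with that of $\mathcal{O}_{\mathbb{P}^2}(1)$, so $\mathcal{F}\cong\mathcal{O}(1)\oplus\mathcal{O}(-1)$ and $\mathcal{P}_{3,\eta}\cong\mathbb{P}_{\mathfrak{C}_H}(\mathcal{O}(1)\oplus\mathcal{O}(-1))$. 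Finally $\mathfrak{C}_H$ is smooth (its partial derivatives are the $X_i^{p}$) and is an integral plane curve, so $\mathcal{P}_{3,\eta}$, a $\mathbb{P}^1$-bundle over it, is smooth, projective and integral of dimension $1+1=2$.

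\emph{Main obstacle.} The linear algebra at a $k$-point is routine; the real work is relativizing it. One must verify that ii), iii) and ``$\eta_0$ principal'' define \emph{closed} subfunctors, and that relative Dieudonné theory genuinely computes $\mathcal{Y}_1$, $\eta_1$, $\ker\eta_1$ and their variation over an arbitrary base $S$ — which rests on the good behaviour of the crystals of these $F$-killed group schemes. The second subtle point is the uniformity of the statement in $\eta$: one has to track the Hermitian form attached to $(E^3,\eta,F)$ and use the structure of $\End(E)$ as a maximal quaternion order to see that it is always non-degenerate, hence $\mathbb{F}_{p^2}$-equivalent to the standard one, so that the same curve $\mathfrak{C}_H$ appears for every admissible $\eta$.
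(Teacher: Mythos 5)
Your write-up is correct in substance, but be aware that the paper itself contains no proof of this statement: its ``proof'' is a citation of Li--Oort (Lemma 3.7 for representability, Section 9.4 for the explicit description), and what you have reconstructed is essentially Li--Oort's own Dieudonn\'e-theoretic argument rather than an alternative to anything in the paper. The heart of your computation does check out: with $M=D(E^3)$, $N=FM+W(k)v$, $v=\sum a_ie_i$, and $N^*$ the dual lattice for the principal form underlying $\eta=p\lambda_0$, one has $pN^*\subseteq FM+pM\subseteq N$ (so the descent of $\eta$ to $\eta_1$, i.e.\ condition ii) for $i=1$, is automatic --- a point you should state explicitly rather than leave implicit), $D(\ker\eta_1)=N/pN^*$ has length $2$, and condition iii), $FN\subseteq pN^*$, reduces to the single congruence $\sum a_i\sigma(a_i)\equiv 0 \bmod p$, i.e.\ $\bar a_1^{\,p+1}+\bar a_2^{\,p+1}+\bar a_3^{\,p+1}=0$; the companion computation with $V$, the $\mathbb{P}^1$-fibres $\mathbb{P}(N/pN^*)$, and the identification $\mathcal{L}\cong\mathcal{O}(1)$, $\mathcal{L}'\cong\mathcal{O}(-1)$ (the latter only after fixing the $e/f$-splitting, as you note) are all right. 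The two places you leave as sketches are precisely the two inputs from Li--Oort: relativization/representability (their Lemma 3.7 --- the kernels are killed by $F$ and $V$, hence are classified by sub-vector-bundles of a trivial bundle, and ii), iii) are closed conditions on the resulting Grassmannian/projective bundle), and uniformity in $\eta$ (their Lemma 6.1 normal form for the quasi-polarized Dieudonn\'e module, which is cleaner than your mod-$\mathfrak{m}$ Hermitian-form argument, where one still has to explain why an equivalence of the reductions identifies the two moduli problems). One minor imprecision: $V=-F$ cannot hold literally, since $F$ is $\sigma$-linear and $V$ is $\sigma^{-1}$-linear; what you actually use, namely $VM=FM$ and the action on the chosen basis, is correct.
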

\begin{proof}
See \cite[Lemma 3.7]{Li-Oort} for the representability. The explicit description is \cite[Section 9.4]{Li-Oort}.
\end{proof}
By the Yoneda lemma there is a universal family $\mathcal{Y}_2\rightarrow \mathcal{Y}_1\rightarrow \mathcal{Y}_0$ of PFTQs over $\mathcal{P}_{3, \eta}$. The Abelian scheme $\mathcal{Y}_0$ carries a principal polarization (obtained from descending $\eta$). We denote by
$$\varphi_{\text{LO}}: \mathcal{P}_{3, \eta} \rightarrow \mathfrak{A}_3$$
the corresponding map to the Siegel moduli stack. Clearly the image is contained in the supersingular locus. Now Li \& Oort have proven the following theorem:
\begin{thm}\label{ThmLiOortMap}
\begin{enumerate} \item[i)] The map
$$\varphi_{\text{LO}}: \mathcal{P}_{3, \eta} \rightarrow \mathcal{S}_3$$
contracts a curve $T$ and is quasi-finite on the complement of $T$.
\item[ii)] If we take the disjoint union over the (finitely many) equivalence classes of $\eta$ modulo automorphisms of $E^3$, the obtained map
$$\coprod_{\eta} \mathcal{P}_{3, \eta} \rightarrow \mathcal{S}_3$$
is surjective.
\end{enumerate}
\end{thm}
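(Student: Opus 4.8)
The plan is to reduce both assertions to the structure theory of supersingular Dieudonn\'e modules together with the explicit description of $\mathcal{P}_{3,\eta}$ recalled above; this is the content of \cite[Sections 3, 6 and 9]{Li-Oort}, whose argument we outline. For surjectivity in ii): given a principally polarized supersingular $(X,\lambda)$ over $k$, one uses that $X$ is isogenous to $E^3$ and in fact admits a \emph{minimal} isogeny $\mu\colon E^3\to X$ in the sense of Li--Oort (its source is superspecial, i.e.\ a product of supersingular elliptic curves, and $\mu$ has minimal degree among isogenies to $X$ from such an abelian variety). Put $\eta:=\mu^{\ast}\lambda$; minimality of $\mu$ forces $\ker(\eta)=E^3[p]$. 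One then factors $\mu$ as a chain $E^3=\mathcal{Y}_2\to\mathcal{Y}_1\to\mathcal{Y}_0=X$ by peeling off one $\alpha_p$ at a time, using the filtration of $D(X)$ by the images of the powers of $V$, so that $\ker(\rho_i)$ is an $\alpha$-group of rank $i$ and $\ker(\eta_1)\subset\mathcal{Y}_1[F]$; the polarization descends at each step because $\ker(\rho_i)$ is isotropic for the relevant pairing, giving conditions i)--iii) of Definition \ref{DefPFTQ}. Finally, the polarizations $\eta$ on $E^3$ with $\ker(\eta)=E^3[p]$ form finitely many orbits under $\Aut(E^3)=\Gl_3(\mathcal{O})$, where $\mathcal{O}=\End(E)$ is a maximal order in the quaternion algebra $B_{p,\infty}$: this is a class-number finiteness statement for the relevant lattices over $\Mat_3(\mathcal{O})$ (finitely many genera, each with finitely many classes), cf.\ \cite{Li-Oort}.

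For i): one has $\dim\mathcal{P}_{3,\eta}=2$ from the $\mathbb{P}^1$-bundle description over the Hermitian curve $\mathfrak{C}_H$, while $\dim\mathcal{S}_3=\left\lfloor 9/4\right\rfloor=2$. Since $\mathcal{P}_{3,\eta}$ is projective over $k$ and irreducible, $\varphi_{\text{LO}}(\mathcal{P}_{3,\eta})$ is closed (image of a projective scheme), $2$-dimensional and irreducible, hence an irreducible component of $\mathcal{S}_3$, and $\varphi_{\text{LO}}$ is generically finite onto it. To locate the contracted locus, analyse a fibre of $\varphi_{\text{LO}}$ over $[(X,\lambda)]$: it parametrizes factorizations of an isogeny $E^3\to X$ into a PFTQ, i.e.\ a choice of the rank-$1$ $\alpha$-group scheme in $\mathcal{Y}_1$ killed by $\rho_1$ and of the rank-$2$ $\alpha$-group scheme in $E^3$ killed by $\rho_2$, each constrained by the polarization and flag conditions of Definition \ref{DefPFTQ}. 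When $a(X)=1$ these choices are rigid and the fibre is finite; the fibre jumps in dimension exactly where the $a$-number of $\mathcal{Y}_0$ is at least $2$, and in the superspecial case the admissible flags form a positive-dimensional family. One then shows---this is the Dieudonn\'e-module bookkeeping of \cite[Section 9]{Li-Oort}---that the locus of non-rigid PFTQs is a single irreducible curve $T\subset\mathcal{P}_{3,\eta}$ (an explicit curve, a section of the $\mathbb{P}^1$-bundle in the coordinates of \cite[Section 9.4]{Li-Oort}) which is contracted to a superspecial point, whereas on $\mathcal{P}_{3,\eta}\setminus T$ the $a$-number of $\mathcal{Y}_0$ stays at most $2$ and the fibres are finite, so $\varphi_{\text{LO}}$ is quasi-finite there.

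The genuinely technical step is this last one: proving that $\varphi_{\text{LO}}$ is quasi-finite on \emph{all} of $\mathcal{P}_{3,\eta}\setminus T$ and identifying $T$ precisely requires the classification of rigid versus non-rigid PFTQ chains and the explicit identification of the non-rigid locus inside the $\mathbb{P}^1$-bundle, carried out in \cite[Section 9]{Li-Oort}. By contrast, the isogeny to $E^3$, the existence and basic properties of the minimal isogeny, the descent of the polarization along the chain, and the class-number finiteness are all comparatively formal once the supersingular Dieudonn\'e-module picture is in hand.
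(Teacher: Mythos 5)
Your overall route coincides with the paper's: the actual content of both parts is delegated to Li--Oort, and the paper's proof does exactly this, defining $T$ concretely as the image of the section of $\pi$ corresponding to $\pr_2:\mathcal{O}(1)\oplus\mathcal{O}(-1)\rightarrow\mathcal{O}(-1)$ and citing \cite[pp. 58--59]{Li-Oort} for i) and \cite[Corollary 4.2]{Li-Oort} for ii). So as a proof-by-citation your proposal is acceptable, and your identification of $T$ as a section of the $\mathbb{P}^1$-bundle matches the paper.

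However, two intermediate assertions in your sketch are incorrect and would not survive if the sketch were made self-contained. First, for surjectivity: it is not true that the minimal isogeny $\mu\colon E^3\to X$ satisfies $\ker(\mu^{\ast}\lambda)=E^3[p]$, nor that $\mu$ itself is the composite of a PFTQ. When $a(X)=3$ the minimal isogeny is an isomorphism, and when $a(X)=2$ it has degree strictly less than $p^3$, so in these cases $\ker(\mu^{\ast}\lambda)\neq E^3[p]$; Li--Oort's argument produces \emph{some} PFTQ ending at $(X,\lambda)$ whose composite merely factors through the minimal isogeny, and these non-generic cases (especially $a=3$) are exactly the delicate ones for this paper. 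Second, your justification of quasi-finiteness off $T$ via $a$-numbers is wrong in both directions: fibres over points with $a(\mathcal{Y}_0)=2$ are finite, so the fibre dimension does not jump ``exactly where $a\geqslant 2$''; and it is false that $a(\mathcal{Y}_{0,\xi})\leqslant 2$ for all $\xi\notin T$, since the image of $\varphi_{\text{LO}}$ is a two-dimensional component of $\mathcal{S}_3$ which in general contains several superspecial points, only one of which is the image of $T$, so points of $\mathcal{P}_{3,\eta}\setminus T$ do map to $a=3$ points. (This is precisely why the proof of Lemma \ref{LemIrredComp} must shrink to a neighbourhood avoiding the \emph{other} $a=3$ points and remove only the one component carrying the positive-dimensional fibre.) The correct dichotomy is rigid versus non-rigid PFTQs, equivalently whether the composite isogeny has kernel $E^3[F]$, which is what Li--Oort and the paper's later arguments actually use. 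These slips do not invalidate the theorem, since you ultimately defer to \cite{Li-Oort} for the technical step, but as written your sketch misidentifies the mechanism behind both the contraction and the surjectivity.
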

\begin{proof}
The curve $T$ in i) is defined to be the image of the section
$$s: \mathfrak{C}_H \rightarrow \mathcal{P}_{3, \eta} $$
of $\pi$ corresponding to the map
$$\pr_2:\mathcal{O}(1)\oplus \mathcal{O}(-1)\rightarrow \mathcal{O}(-1)\,.$$
See \cite[pp. 58-59]{Li-Oort} for the details. For ii) we refer to \cite[Corollary 4.2]{Li-Oort}.
\end{proof}
\section{Deformation theory}
\subsection{Dieudonn\'e modules}
In this section we describe the variation of the crystalline cohomology in a Li-Oort family. This is a technical tool we need for computing the Kodaira-Spencer map in the next section.
\par Indeed, let $\mathcal{Y}_0\longrightarrow \mathcal{P}_{3, \eta}$ be the principally polarized abelian scheme constructed by Li \& Oort and $\xi \in \mathcal{P}_{3, \eta}$ be a closed point. Assume that $\xi \notin T$. Our goal is to describe the relative crystalline cohomology of $\mathcal{Y}_0$ in an open neighborhood of $\xi$. To this end, consider the stalk $\mathcal{O}_{\mathcal{P}_{3, \eta}, \xi}$ and denote it by $A$. Choose $\tilde{A},\, \sigma$, a lift of $A$ together with a lift of Frobenius as in Section \ref{SecDieu}. 
Our goal is to describe the Dieudonn\'e module of the Abelian scheme $\mathcal{Y}_0 \times_{\mathcal{P}_{3, \eta}} \Spec(A)$. Intuitively this means that we want to understand the variation of the crystalline cohomology in an open neighborhood of the point $\xi$.
\par 
To start with, recall that we are given a supersingular elliptic curve $E$ over $k$ and a polarization $\eta$ on $E^3$ satisfying $\ker(\eta)=E^3[p]$. Denote by $M_2=D(E^3)$ the Dieudonn\'e module of $E^3$. Then $\eta$ induces an alternating pairing
$$\langle \cdot, \cdot \rangle: M_2^t \times M_2^t \longrightarrow W(k)$$
where $M_2^t$ is the dual Dieudonn\'e module.
\par By \cite[Lemma 6.1]{Li-Oort} there exists a $W(k)$-basis $m_0, Fm_0, m_1, Fm_1,m_2, Fm_2$ of $M_2$ such that for all $i=0,1,2$
$$F\cdot m_i=Fm_i,\, F\cdot Fm_i=-p m_i\, ,$$
and such that the pairing $\langle \cdot, \cdot \rangle$ is given by the matrix
$\begin{pmatrix}
0 & p\\
-p & 0
\end{pmatrix}^{\oplus 3}$ with respect to the basis dual to $m_0, \ldots, Fm_2$.
\par Now consider the constant Abelian scheme $E^3_A=E^3\times \Spec(A)$. We have
$$\mathbb{D}(E^3_A)=M_2\hat{\otimes}_{W(k)} \tilde{A} $$
where the map $W(k)\longrightarrow \tilde{A}$ is the unique map lifting the inclusion $k\hookrightarrow A$ (which exists because $k$ is perfect and $\tilde{A}$ is $p$-adically complete). Denote $\mathbb{M}_2=\mathbb{D}(E^3_A)$. It carries a connection
$$\nabla: \mathbb{M}_2 \longrightarrow \mathbb{M}_2\hat{\otimes} \hat{\Omega}_{\tilde{A}}\,,$$
which equals the trivial connection since $E^3_A$ is a constant family.
\par Before we proceed to determine the Dieudonn\'e module of $\mathcal{Y}_0$, we digress and describe a certain affine open subset of $\mathcal{P}_{3, \eta}$. Indeed, recall that we have a $\mathbb{P}^1$-bundle
$$\pi: \mathcal{P}_{3, \eta} \rightarrow \mathfrak{C}_H$$
where $\mathfrak{C}_H=\mathcal{V}(X_0^{p+1}+X_1^{p+1}+X_2^{p+1}) \subset \mathbb{P}^2$. Let us denote by $U_0\subset \mathfrak{C}_H$ the standard affine open where $X_0$ does not vanish. After relabeling the coordinates we can assume without loss that $\xi\in U_0$.
\par We put $x_i=\frac{X_i}{X_0}$. Now the $\mathbb{P}^1$-bundle $\pi$ has a trivialization over $U_0$ given by the standard trivialization of the vector bundle $\mathcal{O}(1)\oplus\mathcal{O}(-1)$ on $U_0$. This determines an isomorphism
$$\pi^{-1}(U_0)\setminus T\cong U_0\times \mathbb{A}^1\,.$$
We denote the latter affine open by $U\subset \mathcal{P}_{3, \eta}$. By assumption, we have $\xi \in U$. Let us denote by $t$ the function in the coordinate ring of $U$ corresponding to the coordinate on the second factor of $U_0\times \mathbb{A}^1$. By localization we get three elements in $A=\mathcal{O}_{\mathcal{P}_{3, \eta}, \xi}$ which we also denote $x_1,x_2,t$ by slight abuse of notation.
\par We get to the main lemma of this section. 
\begin{lem}\label{LemDieu}
The Dieudonn\'e module $\mathbb{D}(\mathcal{Y}_0\times \Spec(A))$ is given by the\\ \mbox{$\tilde{A}$-submodule} of $\mathbb{M}_2$ generated by
$$m_0+\tx_1^p m_1+\tx_2^p m_2+\tit^p Fm_0\,,-\tx_1^p Fm_0+Fm_1\,, -\tx_2^p Fm_0+Fm_2\,, p \mathbb{M}_2$$
where $\tx_1, \tx_2, \tit\in \tilde{A}$ are lifts of $x_1, x_2, t$ respectively.
\end{lem}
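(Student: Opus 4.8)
The strategy is to compute the Dieudonné module of $\mathcal{Y}_0$ by descent along the composite isogeny $\rho = \rho_1 \circ \rho_2 \colon E^3_A \to \mathcal{Y}_0$. Since $\mathbb{D}$ is contravariant and exact, $\mathbb{D}(\mathcal{Y}_0 \times \Spec(A))$ is identified with a sub-$\tilde{A}$-module $\mathbb{M}_0 \subset \mathbb{M}_2$ of finite col<br> index, and the containment $p\mathbb{M}_2 \subset \mathbb{M}_0$ is automatic because $\rho$ is killed by a power of $p$ (in fact by $p$, since each $\ker(\rho_i)$ is an $\alpha$-group annihilated by $F$ and $V$). So the real content is to pin down the three generators modulo $p\mathbb{M}_2$, i.e.\ to compute the Dieudonné submodule of $\mathbb{M}_2/p\mathbb{M}_2$ cut out by the kernel of $\rho$.

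First I would recall from \cite[Section 9.4]{Li-Oort} the explicit description of the universal PFTQ over the affine chart $U = U_0 \times \mathbb{A}^1$: over this chart $\ker(\rho_2)$ and $\ker(\rho_1)$ are given by explicit $\alpha$-subgroup schemes of $E^3_A[F]$ and of $\mathcal{Y}_1[F]$ respectively, with defining equations involving the coordinates $x_1, x_2$ (which parametrize the point $[1:x_1:x_2] \in U_0 \subset \mathfrak{C}_H$, the "direction" of the first $\alpha$-quotient) and $t$ (the fiber coordinate of the $\mathbb{P}^1$-bundle, governing the second step). On Dieudonné modules, $F$-kernels correspond to submodules of $\mathbb{M}_2/F\mathbb{M}_2$; under the basis $m_0, Fm_0, \ldots, m_2, Fm_2$ normalized as in \cite[Lemma 6.1]{Li-Oort}, the relation $F \cdot m_i = Fm_i$, $F \cdot Fm_i = -p m_i$ shows that $F\mathbb{M}_2 \pmod{p}$ is spanned by the $Fm_i$, so $\mathbb{M}_2/(F\mathbb{M}_2 + p\mathbb{M}_2)$ is spanned by the images of $m_0, m_1, m_2$. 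The first $\alpha$-quotient in the direction $[1:x_1:x_2]$ then forces into $\mathbb{M}_0$ the element $m_0 + \tilde{x}_1^p m_1 + \tilde{x}_2^p m_2$ up to an $F$-term — the $p$-th powers appearing because the Dieudonné functor over $\tilde{A}$ is semilinear and the $\alpha$-subgroup is defined by $p$-power (Frobenius-twisted) equations. The $\tilde{t}^p Fm_0$ correction encodes the second step $\rho_1$, and the remaining two generators $-\tilde{x}_i^p Fm_0 + Fm_i$ come from the compatibility forced by $\rho_2$ having a rank-$2$ kernel in a specified relative position.

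Concretely I would: (1) over $U$, write down $\ker(\rho)$ as a rank-$3$ $\alpha$-group scheme inside $E^3_A[p]$ from Li--Oort's formulas, translating their flag data into explicit generators; (2) apply $\mathbb{D}$ to the short exact sequence $0 \to \ker(\rho) \to E^3_A \to \mathcal{Y}_0 \to 0$ to get $0 \to \mathbb{M}_0 \to \mathbb{M}_2 \to \mathbb{D}(\ker\rho) \to 0$ of $\tilde{A}$-modules (using exactness of $\mathbb{D}$, and that $\mathbb{D}(\ker\rho)$ is finite locally free over $A = \tilde{A}/p$), hence $\mathbb{M}_0 = \ker(\mathbb{M}_2 \twoheadrightarrow \mathbb{D}(\ker\rho))$; (3) lift the resulting mod-$p$ generators to the claimed elements of $\mathbb{M}_2$, choosing lifts $\tilde{x}_1, \tilde{x}_2, \tilde{t} \in \tilde{A}$ of $x_1, x_2, t$, and check the module they generate together with $p\mathbb{M}_2$ has the right colength (equal to $p^3$, matching $\deg \rho$) so that it is exactly $\mathbb{M}_0$ and not a proper sub- or supermodule; (4) verify it is stable under $F$, $V$, and $\nabla$ — for $\nabla$ this is where the exponents must be $p$-th powers, since $\nabla$ is the trivial connection on $\mathbb{M}_2$ and $\nabla(\tilde{x}_i^p) \in p\hat{\Omega}_{\tilde{A}}$ modulo... actually $d(\tilde{x}_i^p) = p\tilde{x}_i^{p-1} d\tilde{x}_i \in p\hat{\Omega}_{\tilde{A}}$, so the submodule is visibly $\nabla$-horizontal mod $p$, which is what makes the descended connection well-defined.

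\textbf{Main obstacle.} The hard part is step (1) together with the bookkeeping in step (3)–(4): extracting from \cite[Section 9.4]{Li-Oort} the precise defining equations of the two-step flag over the chart $U_0 \times \mathbb{A}^1$ (including getting the right normalization so that the $\mathbb{P}^1$-fiber coordinate is literally the $t$ defined above), and then correctly matching Frobenius-semilinearity conventions so that the exponents come out as $p$ and not $1$ or $p^2$, and so that the cross-terms $\tilde{t}^p Fm_0$ and $-\tilde{x}_i^p Fm_0$ have the correct signs relative to the pairing normalization $F m_i = Fm_i$, $F(Fm_i) = -p m_i$. Once the $\alpha$-group data is transcribed faithfully, verifying $F$-, $V$-, and $\nabla$-stability and the colength count is a routine (if slightly lengthy) linear-algebra check over $\tilde{A}$.
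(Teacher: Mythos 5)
Your proposal takes essentially the same route as the paper: the identification of the submodule is ultimately delegated to Li--Oort's explicit description of the $g=3$ family in \cite[Section 9.4]{Li-Oort} (the paper simply cites this, together with Karemaker--Yobuko--Yu, rather than rederiving it from the exact sequence for $\ker\rho$), and the substantive verification is the same check that the candidate module is finite locally free and stable under $F$, $V$, $\nabla$, with the $p$-th-power exponents and the Hermitian relation $1+x_1^{p+1}+x_2^{p+1}=0$ carrying the argument. The only cosmetic difference is that you phrase the identification via exactness of $\mathbb{D}$ applied to $0\to\ker\rho\to E^3_A\to\mathcal{Y}_0\to 0$ plus a colength count, which is the same mechanism the citation encapsulates.
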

\begin{proof}
Let us denote by $\mathbb{M}_0$ the $\tilde{A}$-submodule of $\mathbb{M}_2$ generated by
$$\gamma_1\stackrel{\textrm{def.}}{=}m_0+\tx_1^p m_1+\tx_2^p m_2+\tit^p Fm_0\,,\gamma_2\stackrel{\textrm{def.}}{=}-\tx_1^p Fm_0+Fm_1\,,$$
$$\gamma_3\stackrel{\textrm{def.}}{=}-\tx_2^p Fm_0+Fm_2\,, p \mathbb{M}_2\,.$$
First we notice that $\mathbb{M}_0$ is independent of the choice of the lifts $\tx_1, \tx_2, \tit$ because $p \mathbb{M}_2$ is contained in $\mathbb{M}_0$.
\par At first glance it might seem unusual that the coordinates are all raised to the $p$th power in the definition of the generators. We will see below that this is necessary in order to satisfy the axioms of a Dieudonn\'e module.
\par To prove that $\mathbb{M}_0$ is indeed a Dieudonn\'e submodule, we must show that it is finite locally free and closed under $\nabla, F, V$. The former assertion follows from the short exact sequence
$$0 \longrightarrow \mathbb{M}_0 \longrightarrow \mathbb{M}_2\longrightarrow \mathbb{M}_2/\mathbb{M}_0 \longrightarrow 0$$
and the observation that $\mathbb{M}_2/\mathbb{M}_0$ is a free $\tilde{A}/p$-module with basis $m_1, m_2, Fm_0$.
\par For the closedness under $\nabla$ we compute
$$\nabla(\gamma_1)=p \tx_1^{p-1} m_1\otimes d\tx_1+p \tx_2^{p-1} m_2\otimes d\tx_2+p \tit^{p-1} Fm_0 \otimes d\tit$$
which is in $p \mathbb{M}_2 \hat{\otimes} \hat{\Omega}_{\tilde{A}}$ and similarly for the other generators. Thus we see that the $p$-th powers in the definition of the generators are essential for the closedness under $\nabla$.
\par We verify now that $\mathbb{M}_2$ is closed under $F$ and $V$. Indeed, it suffices to compute modulo $p \mathbb{M}_2$:
$$F(\gamma_1)\equiv Fm_0+\tx_1^{p^2} Fm_1+\tx_2^{p^2} Fm_2 = (1+\tx_1^{p^2+p}+\tx_2^{p^2+p}) Fm_0 + \tx_1^{p^2} \gamma_2+\tx_1^{p^2} \gamma_3$$
modulo  $p \mathbb{M}_2$. Using the relation $1+x_1^{p+1}+x_2^{p+1}=0$, we obtain $F\gamma_1\in \mathbb{M}_0$. Similarly one has
$$V(\gamma_1)\equiv   (1+\tx_1^{1+p}+\tx_2^{1+p}) Fm_0 + \tx_1 \gamma_2+\tx_1 \gamma_3 \mod p \mathbb{M}_2 $$
and thus $V(\gamma_1)\in \mathbb{M}_0$. Also for the well-definedness of $V$ it was essential that everything is raised to the power $p$. Furthermore, $F\gamma_2, V\gamma_2, F\gamma_3,{ V\gamma_3 \in p\mathbb{M}_2}$. We conclude that $\mathbb{M}_0$ is closed under $F, V$.
\par We have shown that $\mathbb{M}_0$ is a Dieudonn\'e submodule of $\mathbb{M}_2$. The fact that it is equal to the image of
$$\mathbb{D}(\mathcal{Y}_0\times \Spec(A)) \rightarrow \mathbb{D}(E^3_A)=\mathbb{M}_2$$
follows from the explicit description of the $g=3$ Li-Oort family in \cite[Section 9.4]{Li-Oort} (see also \cite[Section 3.2]{KaremakerYobukoYu})
\end{proof}
\subsection{Kodaira-Spencer map}
Let $\mathcal{Y}_0\rightarrow \mathcal{P}_{3, \eta}$ be a Li-Oort family and ${\xi\in \mathcal{P}_{3, \eta}}$ be a closed point as in the previous section. The Kodaira-Spencer map
$$\kappa: T_\xi \mathcal{P}_{3, \eta} \longrightarrow  \Hom^\text{sym}\left(\HH^0(\mathcal{Y}_{0, \xi}, \Omega_{\mathcal{Y}_{0, \xi}}) \longrightarrow \HH^1(\mathcal{Y}_{0, \xi}, \mathcal{O}_{\mathcal{Y}_{0, \xi}})  \right)\,, $$
first introduced in the algebraic setting by Illusie \cite[2.1.5.7]{Illusie}, may be viewed as the differential of the morphism $\phi_{\text{LO}}:\mathcal{P}_{3, \eta} \rightarrow \mathfrak{A}_3$ to the Siegel moduli stack. In this section we will compute the Kodaira-Spencer map.
\par We begin by choosing a basis of $T_\xi \mathcal{P}_{3, \eta}$. Indeed, the first basis element is $\delta_1=\frac{\partial}{\partial t}$. The one-dimensional vector space generated by $\frac{\partial}{\partial t}$ is canonically defined because it is the direction parallel to the fibers of the $\mathbb{P}^1$-bundle. The second basis element we will choose is not canonically defined as it will depend on our choice of a trivialization of this $\mathbb{P}^1$-bundle. Let us take $\delta_2=-x_2^p \frac{\partial}{\partial x_1}+x_1^p \frac{\partial}{\partial x_2}$. Then $\delta_1, \delta_2$ are derivations on the affine open $U\subset \mathcal{P}_{3, \eta}$ that do not vanish at $\xi$.
\par We are now ready to compute the Kodaira-Spencer map. Indeed, consider the $A$-module with connection $\mathcal{H}^1_{\text{dR}}=(\mathbb{M}_0/p, \nabla )$. Then $\mathcal{H}^1_{\text{dR}}$ is isomorphic to the relative de Rham cohomology of $\mathcal{Y}_0 \times_{\mathcal{P}_{3, \eta}} \Spec(A)$ over $\Spec(A)$ and $\nabla$ is identified with the Gauss-Manin connection (see \cite[Proposition 3.6.4]{Ber}).
\par Therefore, it only remains to identify the submodule of differentials. For that purpose we look at the exact sequence
$$\mathcal{H}^1_{\text{dR}} \stackrel{v}{\longrightarrow} (\mathcal{H}^1_{\text{dR}})^{(p)} \stackrel{f}{\longrightarrow}  \mathcal{H}^1_{\text{dR}}\,,$$
where $(\mathcal{H}^1_{\text{dR}})^{(p)}= (\mathcal{H}^1_{\text{dR}})\otimes_{A, x \mapsto x^p} A$ and $f$ (resp. $v$) is the map induced by $F$ (resp. $V$). We will use the following Lemma:
\begin{lem}
There is a locally free, locally direct summand $\omega \subset \mathcal{H}^1_{\text{dR}}$ such that $\omega^{(p)}\subset (\mathcal{H}^1_{\text{dR}})^{(p)}$ is equal to $\im(v)=\ker(f)$. It is uniquely determined by these conditions.
\par Furthermore $\omega= \HH^0(\mathcal{Y}_0 \times_{\mathcal{P}_{3, \eta}} \Spec(A), \Omega_{\mathcal{Y}_0 \times_{\mathcal{P}_{3, \eta}} \Spec(A)})$.
\end{lem}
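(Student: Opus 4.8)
The plan is to deduce everything from two standard facts about the de Rham cohomology of an abelian scheme over an $\mathbb{F}_p$-scheme, together with a descent argument for uniqueness. First, note that multiplication by $p$ is $0$ on $\mathcal{H}^1_{\text{dR}}$ and on $(\mathcal{H}^1_{\text{dR}})^{(p)}$, so the Dieudonn\'e relations $FV=VF=p$ give $f\circ v=0=v\circ f$, whence $\im(v)\subseteq\ker(f)$. The equality $\im(v)=\ker(f)$, together with the fact that this submodule is locally free and a local direct summand of rank $3$, is precisely the statement that $\mathcal{H}^1_{\text{dR}}$, equipped with $f$ and $v$, is the value at $\Spec(A)$ of the contravariant Dieudonn\'e crystal of the Barsotti--Tate group $\mathcal{Y}_0[p^\infty]\times_{\mathcal{P}_{3,\eta}}\Spec(A)$; I would quote this from \cite{BBM} (compatibly with the de Rham description invoked above via \cite[Proposition 3.6.4]{Ber}). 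If a self-contained check is preferred, one can instead read off $f$ and $v$ on the $A$-basis $\gamma_1,\gamma_2,\gamma_3,pm_1,pm_2,pFm_0$ of $\mathbb{M}_0/p\mathbb{M}_0$ coming from Lemma \ref{LemDieu}, and verify directly, using $1+x_1^{p+1}+x_2^{p+1}=0$ on $\mathfrak{C}_H$, that $\ker(f)=\im(v)$ is free of rank $3$ and a local direct summand.

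Next I would identify this submodule with the Hodge bundle. Write $\mathcal{A}=\mathcal{Y}_0\times_{\mathcal{P}_{3,\eta}}\Spec(A)$ and $\omega=\HH^0(\mathcal{A},\Omega_{\mathcal{A}})$; it is a locally split subbundle of $\mathcal{H}^1_{\text{dR}}$ of rank $3$ (the Hodge filtration of an abelian scheme), hence $\omega^{(p)}$ is a local direct summand of $(\mathcal{H}^1_{\text{dR}})^{(p)}$ of rank $3$. Under the comparison isomorphism, $f$ is the map on $\mathcal{H}^1_{\text{dR}}$ induced by the relative Frobenius $F_{\mathcal{A}/A}\colon\mathcal{A}\to\mathcal{A}^{(p)}$, and $\omega^{(p)}$ is the Hodge bundle of the target $\mathcal{A}^{(p)}$; since $F_{\mathcal{A}/A}$ is, in local coordinates, $x\mapsto x^p$, it pulls invariant differentials back to $0$, i.e. $\omega^{(p)}\subseteq\ker(f)$. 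A rank-$3$ local direct summand contained in the rank-$3$ local direct summand $\ker(f)$ equals it, so $\omega^{(p)}=\ker(f)=\im(v)$; this simultaneously proves the ``furthermore''.

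Finally, for uniqueness: if $\omega'\subseteq\mathcal{H}^1_{\text{dR}}$ is any locally free local direct summand with $(\omega')^{(p)}=\ker(f)$, then since $A$ is a regular local $\mathbb{F}_p$-algebra its $p$-power Frobenius is faithfully flat (Kunz), and base change along it carries $0\to\omega'\to\mathcal{H}^1_{\text{dR}}\to\mathcal{H}^1_{\text{dR}}/\omega'\to0$ to an exact sequence, so $(\omega')^{(p)}$ is the kernel of $(\mathcal{H}^1_{\text{dR}})^{(p)}\to(\mathcal{H}^1_{\text{dR}}/\omega')^{(p)}$; comparing this for $\omega$ and $\omega'$ and using faithful flatness forces $\omega\subseteq\omega'$ and $\omega'\subseteq\omega$. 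The step I expect to require the most care is the first one: aligning the contravariant conventions so that $f$ genuinely is the linearized relative Frobenius and $\im(v)=\ker(f)$ is a \emph{local direct summand} rather than merely a coherent subsheaf --- this is exactly where one uses that $\mathcal{Y}_0$ is an abelian scheme, so that its $p$-torsion is a Barsotti--Tate group of the requisite type; once this is settled, the remaining steps are bookkeeping.
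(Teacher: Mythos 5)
Your argument is correct, but it is more self-contained than the paper's, which disposes of the lemma by two citations: the existence and uniqueness of $\omega$ is quoted from \cite[Proposition 2.5.2]{deJong}, and the identification $\omega= \HH^0(\mathcal{Y}_0 \times_{\mathcal{P}_{3, \eta}} \Spec(A), \Omega)$ from \cite[Proposition 4.3.10]{BBM}. You share the paper's essential input — the crystalline Dieudonn\'e theory of the Barsotti--Tate group $\mathcal{Y}_0[p^\infty]$, via \cite{BBM}, to get that $\im(v)=\ker(f)$ is a locally free local direct summand (or, alternatively, the explicit basis $\gamma_1,\gamma_2,\gamma_3,pm_1,pm_2,pFm_0$ of $\mathbb{M}_0/p$ from Lemma \ref{LemDieu}, which does give a valid hands-on check) — but you then replace the remaining citations by direct arguments: $\omega^{(p)}\subseteq\ker(f)$ because the linearized relative Frobenius annihilates invariant differentials, with equality by comparing two rank-$3$ local direct summands (a projection splitting $\omega^{(p)}\subset \mathcal{H}^{1,(p)}_{\mathrm{dR}}$ restricts to a splitting inside $\ker(f)$, and the rank-$0$ complement vanishes); and uniqueness via Kunz, since Frobenius on the regular local ring $A$ is faithfully flat, hence the base-change functor reflects zero maps and $(\omega')^{(p)}=\omega^{(p)}$ forces $\omega'=\omega$ (in fact among arbitrary submodules, which is stronger than stated). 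Two small caveats: the middle step is slightly redundant as written, since the same circle of results in \cite{BBM} that you would quote for $\ker(f)=\im(v)$ being a rank-$3$ direct summand already identifies it with the Hodge bundle of $\mathcal{A}^{(p)}$; and you do need, as you acknowledge, the compatibility (under the comparison of \cite[Proposition 3.6.4]{Ber} and the de Rham description in \cite{BBM}) that $f$ really is the linearized $F_{\mathcal{A}/A}$ — neither point is a gap, only something to state carefully. What your route buys is an elementary, reference-light proof of the uniqueness and of the ``furthermore''; what the paper's route buys is brevity, outsourcing exactly these points to \cite{deJong} and \cite{BBM}.
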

\begin{proof}
The first assertion is \cite[Proposition 2.5.2]{deJong}. The second assertion follows from \cite[Proposition 4.3.10]{BBM}.
\end{proof}
In our case, we claim that $\omega$ is generated by the four classes
$$g_1 \stackrel{\textrm{def.}}{=} Fm_0+\tx_1\, Fm_1+\tx_2\, Fm_2-p \tit m_0,$$
$$g_2 \stackrel{\textrm{def.}}{=} p\tx_1\,  m_0-p\, m_1,$$
$$p \tx_2\, m_0-p m_2,$$
$$g_3 \stackrel{\textrm{def.}}{=}p\, Fm_0$$
coming from the four elements $V\gamma_1, V \gamma_2, V \gamma_3, V(-p m_0)$ (in the notation of the proof of Lemma \ref{LemDieu}); indeed, it suffices to show that the classes of $p\, Fm_1,\, p\, Fm_2$ in $\omega$ are contained in the span of the four given elements. This follows from the identitites ${p\, Fm_1= p\, \gamma_2 + \tx_1^p p\, F m_0,}$\\
${p Fm_2= p\, \gamma_3 + \tx_2^p p\, F m_0\,.}$
\par We will now assume that $x_2$ does not vanish at $\xi$ which we can, without loss of generality, after switching $x_1, x_2$ if necessary. Under this assumption and using the equation $1+x_1^{p+1}+x_2^{p+1}=0$ we get that the class of
$$p \tx_2\, m_0-p m_2 \equiv - \frac{\tx_1^p}{\tx_2^p} g_2- \frac{1}{\tx_2^p} p \gamma_1 \equiv - \frac{\tx_1^p}{\tx_2^p} g_2\mod p \mathbb{M}_0$$
gives a multiple of $g_2$ in $\omega$. Therefore, $g_1, g_2, g_3$ are a basis for $\omega$ as an $A$-module.
\par Similarly we get the basis
$$c_1\stackrel{\textrm{def.}}{=} p m_0\,, c_2\stackrel{\textrm{def.}}{=}\gamma_2=\tx_1^p\, Fm_0-Fm_1,\, c_3\stackrel{\textrm{def.}}{=}\gamma_1=m_0+\tx_1^p m_1+\tx_2^p m_2+\tit^p Fm_0$$
for $(\mathcal{H}^1_{\text{dR}})/\omega$. Indeed, the class of $\gamma_3= \tx_2^p\, Fm_0-Fm_2$ is redundant thanks to the relation
$$\tx_2^p\, Fm_0-Fm_2 \equiv - \frac{\tx_1}{\tx_2} c_2- \frac{1}{\tx_2^p} g_1 \equiv - \frac{\tx_1^p}{\tx_2^p} c_2\mod \omega $$
using the observation $g_3 = p Fm_0 \in \omega$ and again $1+x_1^{p+1}+x_2^{p+1}=0$.
\par The Kodaira-Spencer map is now calculated as follows:
\begin{lem}\label{LemKS}
$$\kappa(\delta_1)(g_1)=-p m_0=-c_1,$$
$$ \kappa(\delta_1)(g_2)=\kappa(\delta_1)(g_3)=0,$$
$$\kappa(\delta_2)(g_1)=-\tx_2^p\, Fm_1+\tx_1^p\, Fm_2\in \lspan(c_1, c_2),$$
$$\kappa(\delta_2)(g_2)=\tx_2^p pm_0,$$
$$\kappa(\delta_2)(g_3)=0$$
\end{lem}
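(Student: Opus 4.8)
The plan is to use the description of the Kodaira--Spencer map as the homomorphism induced by the Gauss--Manin connection \cite[2.1.5.7]{Illusie}. Under the identification $\mathcal{H}^1_{\text{dR}}=(\mathbb{M}_0/p\mathbb{M}_0,\nabla)$ established above, with $\omega\subset\mathcal{H}^1_{\text{dR}}$ its submodule of differentials, this means that for $s\in\omega$
$$\kappa(\delta)(s)=\overline{\nabla_{\tilde\delta}(s)}\in\mathcal{H}^1_{\text{dR}}/\omega,$$
where $\tilde\delta$ is any lift of $\delta$ to a derivation of $\tilde A$ (the right-hand side is readily seen to be independent of the lifts chosen). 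So everything reduces to evaluating $\nabla_{\tilde\delta_1}$ and $\nabla_{\tilde\delta_2}$ on $g_1,g_2,g_3$ and reading off the class modulo $\omega$. Since $\mathbb{M}_0\subset\mathbb{M}_2=M_2\hat\otimes_{W(k)}\tilde A$ and $\nabla$ is the restriction of the trivial connection, for which $m_0,Fm_0,\dots,m_2,Fm_2$ are horizontal, $\nabla_{\tilde\delta}(g_j)$ is obtained simply by differentiating the $\tilde A$-coefficients in the formula defining $g_j$; moreover only the reduction of $\tilde\delta$ modulo $p$ matters, and it is pinned down by the formulas for $\delta_1=\frac{\partial}{\partial t}$ and $\delta_2=-x_2^p\frac{\partial}{\partial x_1}+x_1^p\frac{\partial}{\partial x_2}$: thus $\tilde\delta_1(\tx_1),\tilde\delta_1(\tx_2)\in p\tilde A$ and $\tilde\delta_1(\tit)\equiv 1$, while $\tilde\delta_2(\tx_1)\equiv-\tx_2^p$, $\tilde\delta_2(\tx_2)\equiv\tx_1^p$ and $\tilde\delta_2(\tit)\in p\tilde A$, all modulo $p$.

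Before the computation I would settle the $p$-adic bookkeeping inside $\mathcal{H}^1_{\text{dR}}=\mathbb{M}_0/p\mathbb{M}_0$, where all the subtlety sits: $p\mathbb{M}_0$ is \emph{strictly smaller} than $p\mathbb{M}_2$, so reductions must be taken modulo $p\mathbb{M}_0$, not modulo $p\mathbb{M}_2$, and the classes of $pm_0,pFm_0,\dots$ are generally nonzero. From Lemma \ref{LemDieu} and the bases exhibited in the preceding discussion one reads off exactly what is needed: $p^2\mathbb{M}_2\subset p\mathbb{M}_0$; the elements $pFm_0=g_3$, $pFm_1=p\gamma_2+\tx_1^p g_3$ and $pFm_2=p\gamma_3+\tx_2^p g_3$ all lie in $\omega$; and $pm_0=c_1\notin\omega$, while $pm_1\equiv\tx_1 c_1$ and $pm_2\equiv\tx_2 c_1$ modulo $\omega$ (by the definition of the generators of $\omega$). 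Consequently, any term of $\nabla_{\tilde\delta}(g_j)$ that is a coefficient in $p\tilde A$ times some $Fm_i$ lies in $\omega$ and dies in $\mathcal{H}^1_{\text{dR}}/\omega$, whereas a term $c\cdot pm_0$ contributes $\overline{c}\,c_1$.

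Granting this, the five identities fall out. For $g_1=Fm_0+\tx_1\,Fm_1+\tx_2\,Fm_2-p\tit\,m_0$: under $\nabla_{\tilde\delta_1}$ the $Fm_1,Fm_2$ terms acquire coefficients in $p\tilde A$ and vanish modulo $\omega$, while $-p\tit\,m_0$ contributes $-pm_0=-c_1$ (its residual $p^2$-term being killed), giving $\kappa(\delta_1)(g_1)=-c_1$; under $\nabla_{\tilde\delta_2}$ the last term again dies and the first two give $-\tx_2^p\,Fm_1+\tx_1^p\,Fm_2$, whereupon substituting $Fm_1=\gamma_2+\tx_1^p Fm_0$ and $Fm_2=\gamma_3+\tx_2^p Fm_0$ makes the $Fm_0$-parts cancel exactly, so that reducing $\gamma_2,\gamma_3$ modulo $\omega$ exactly as in the identification of $c_1,c_2,c_3$ (which uses $1+x_1^{p+1}+x_2^{p+1}=0$ and, through $g_1$, produces a multiple of $c_1$) puts the class in $\lspan(c_1,c_2)$. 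For $g_2=p\tx_1\,m_0-p\,m_1$ one gets $\nabla_{\tilde\delta_1}(g_2)=pm_0\,\tilde\delta_1(\tx_1)\in p^2\mathbb{M}_2\subset p\mathbb{M}_0$, so $\kappa(\delta_1)(g_2)=0$, whereas differentiating the $\tx_1$-coefficient against $\tilde\delta_2$ gives $\kappa(\delta_2)(g_2)=\tx_2^p\,pm_0$. Finally $g_3=p\,Fm_0$ is horizontal, so $\kappa(\delta_1)(g_3)=\kappa(\delta_2)(g_3)=0$.

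The step I expect to be the main obstacle is precisely the bookkeeping of the second paragraph: keeping straight that one reduces modulo $p\mathbb{M}_0$ and not $p\mathbb{M}_2$, and correctly sorting the surviving $p$-torsion classes into those lying in $\omega$ (the $pFm_i$) and those that do not ($pm_0$). It is this asymmetry that makes the $\delta_1$-computations collapse onto the single class $c_1$ while the $\delta_2$-computation of $g_1$ retains a genuine $Fm_1,Fm_2$ contribution. A secondary technical nuisance is that the ``differentiate the coefficients'' recipe produces a priori only an element of $\mathbb{M}_2$; that it in fact lies in the submodule $\mathbb{M}_0$, as it must since $\nabla$ preserves $\mathbb{M}_0$ by Lemma \ref{LemDieu}, becomes manifest only after regrouping terms via the Hermitian relation, exactly as in the verification that $g_1$ itself lies in $\mathbb{M}_0$.
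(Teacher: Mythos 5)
Your proposal is correct and follows essentially the same route as the paper: identify the Kodaira--Spencer map with the map $\omega\to\mathcal{H}^1_{\text{dR}}/\omega$ induced by the Gauss--Manin connection, differentiate the $\tilde A$-coefficients using that $\nabla$ restricts from the trivial connection on $\mathbb{M}_2$, and place $\kappa(\delta_2)(g_1)$ in $\lspan(c_1,c_2)$ via the Hermitian relation, $g_1,g_3\in\omega$, and $x_2(\xi)\neq 0$. Your explicit sorting of the $p$-torsion classes ($pFm_i\in\omega$ versus $pm_0=c_1\notin\omega$, reductions taken modulo $p\mathbb{M}_0$ rather than $p\mathbb{M}_2$) spells out bookkeeping the paper leaves implicit, and any residual sign discrepancies are already present in the paper's own conventions and do not affect the span statements.
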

\begin{proof}
The Kodaira-Spencer map
$$\kappa: T_\xi \mathcal{P}_{3, \eta} \longrightarrow  \Hom^\text{sym}\left(\HH^0(\mathcal{Y}_{0, \xi}, \Omega_{\mathcal{Y}_{0, \xi}}) \longrightarrow \HH^1(\mathcal{Y}_{0, \xi}, \mathcal{O}_{\mathcal{Y}_{0, \xi}})  \right) $$
is computed as follows: For a derivation $\delta$ we look at the map
$$\omega\rightarrow \mathcal{H}^1_\textrm{dR}/\omega$$
induced from $\nabla_\delta$ and restrict it to the fiber at $\xi$. This gives the desired map
$$\kappa(\delta)\in \Hom^\text{sym}\left(\HH^0(\mathcal{Y}_{0, \xi}, \Omega_{\mathcal{Y}_{0, \xi}}) \longrightarrow \HH^1(\mathcal{Y}_{0, \xi}, \mathcal{O}_{\mathcal{Y}_{0, \xi}})  \right)\,.$$
As an example we shall compute $\kappa(\delta_2)(g_1)$ leaving out the other calculations as they are similar. Indeed,
$$\nabla_{-\tx_2^p \frac{\partial}{\partial \tx_1}+\tx_1^p \frac{\partial}{\partial \tx_2}} \left(Fm_0+\tx_1\, Fm_1+\tx_2\, Fm_2-p \tit m_0\right) = -\tx_2^p F m_1+\tx_1^p F m_2$$
and therefore
$$\kappa(\delta_2)(g_1)=-\tx_2^p F m_1+\tx_1^p F m_2\,.$$
To show that this class lies in $\lspan(c_1, c_2)$ we point out that the relation $1+x_1^{p+1}+x_2^{p+1}=0$, the fact $pF m_1\in \omega$, and the non-vanishing of $x_2$ at $\xi$ together imply
$$-\tx_2^p F m_1+\tx_1^p F m_2\equiv \frac{-1}{\tx_2} c_2+ \frac{\tx_1^p}{\tx_2} \left(g_1+\tit c_1\right) \mod \omega$$
And therefore as an element of $(\mathcal{H}^1_{\text{dR}})/\omega$ we have
$$-\tx_2^p F m_1+\tx_1^p F m_2 \in \lspan(c_1, c_2)\,.$$
\end{proof}
\begin{cor}\label{CorUnram}
The map
$$\varphi_{\text{LO}}: \mathcal{P}_{3, \eta} \rightarrow \mathcal{S}_3$$
is unramified on the open set $\mathcal{P}_{3, \eta}\setminus T$.
\end{cor}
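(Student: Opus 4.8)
The plan is to deduce unramifiedness from injectivity of the differential of $\varphi_{\text{LO}}$ at each closed point $\xi\in\mathcal{P}_{3, \eta}\setminus T$. For a scheme of finite type over the algebraically closed field $k$ one has, at a ($k$-rational) closed point $\xi$, a canonical isomorphism $\Omega_{\mathcal{P}_{3, \eta}/k}\otimes k(\xi)\cong\mathfrak{m}_\xi/\mathfrak{m}_\xi^2$, and likewise for $\mathcal{S}_3$; feeding these into the right-exact cotangent sequence, injectivity of the tangent map $T_\xi\mathcal{P}_{3, \eta}\to T_{[\mathcal{Y}_{0, \xi}]}\mathcal{S}_3$ is equivalent to surjectivity of the cotangent map, which by Nakayama forces $\Omega_{\mathcal{P}_{3, \eta}/\mathcal{S}_3}$ to vanish near $\xi$. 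Since this differential is precisely the Kodaira--Spencer map $\kappa$ of the previous subsection, and a coherent sheaf on a finite-type $k$-scheme vanishes once it vanishes at every closed point, the whole assertion reduces to showing that $\kappa$ is injective for every $\xi\notin T$.

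For that I would argue directly from Lemma \ref{LemKS}. Fix $\xi\notin T$; after relabeling the homogeneous coordinates and possibly interchanging $x_1,x_2$ as in the setup, we may assume $\xi\in U$ and $x_2(\xi)\neq 0$, so that $\delta_1,\delta_2$ form a basis of $T_\xi\mathcal{P}_{3, \eta}$ (the fibre and base directions of the $\mathbb{P}^1$-bundle $\pi$) and the reductions of $g_1,g_2,g_3$, respectively of $c_1,c_2,c_3$, form a basis of the fibre of $\omega$, respectively of $\mathcal{H}^1_{\text{dR}}/\omega$, at $\xi$. Suppose $a\delta_1+b\delta_2\in\ker\kappa$ with $a,b\in k$. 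Evaluating the symmetric homomorphism $\kappa(a\delta_1+b\delta_2)=a\,\kappa(\delta_1)+b\,\kappa(\delta_2)$ on the generator $g_2$ and invoking Lemma \ref{LemKS} (where $\kappa(\delta_1)(g_2)=0$ and $\kappa(\delta_2)(g_2)=\tx_2^p\,pm_0$, with $pm_0=c_1$), we obtain in the fibre at $\xi$ the relation $b\,x_2(\xi)^p\,\overline{c_1}=0$; since $\overline{c_1}$ is a basis element and $x_2(\xi)\neq 0$, this gives $b=0$. Evaluating $a\,\kappa(\delta_1)$ on $g_1$ then yields $-a\,\overline{c_1}=0$, hence $a=0$. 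Therefore $\ker\kappa=0$.

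Two bookkeeping remarks complete the plan. First, it is harmless to work with target $\mathfrak{A}_3$, where $\kappa$ naturally lives, instead of $\mathcal{S}_3$: the immersion $\mathcal{S}_3\hookrightarrow\mathfrak{A}_3$ is unramified, so $\Omega_{(\mathcal{P}_{3, \eta}\setminus T)/\mathcal{S}_3}\cong\Omega_{(\mathcal{P}_{3, \eta}\setminus T)/\mathfrak{A}_3}$. Second, in the stacky formulation one checks unramifiedness on a smooth atlas, or equivalently after adding a level structure as explained in the introduction. I do not expect a genuine obstacle here: the corollary is essentially a repackaging of the Kodaira--Spencer computation, and the only point needing slight care is the formal translation between pointwise injectivity of $\kappa$ and the sheaf-theoretic vanishing of relative differentials.
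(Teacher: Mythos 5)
Your proposal is correct and follows essentially the same route as the paper: the differential of $\varphi_{\text{LO}}$ off $T$ is the Kodaira--Spencer map, and the explicit values in Lemma \ref{LemKS} (evaluating on $g_2$ and then $g_1$, using $x_2(\xi)\neq 0$) show $\kappa(\delta_1),\kappa(\delta_2)$ are linearly independent, giving unramifiedness after covering $\mathcal{P}_{3,\eta}\setminus T$ by the coordinate-permuted opens. You merely spell out the standard translation between pointwise injectivity of the tangent map and vanishing of relative differentials, which the paper leaves implicit.
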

\begin{proof}
The differential of $\varphi_{\text{LO}}$ at the point $\xi$ is given by the Kodaira-Spencer map which we computed in the previous lemma. The point $\xi$ was assumed to be in $U$ and satisfied $x_2\neq 0$. The two Kodaira-Spencer classes are linearly independent.
\par This implies that $\varphi_{\text{LO}}$ is unramified on the open set $U\cap \{x_2\neq 0\}$ and similarly for the open sets we obtain by permuting coordinates. Since these open sets cover $\mathcal{P}_{3, \eta}\setminus T$, the lemma follows.
\end{proof}
\subsection{Irreducible components of formal neighborhoods}
Let $Y$ be a supersingular principally polarized abelian variety over $k$. In this section we study the formal completion of $\mathcal{S}_3$ at the moduli point $[Y]$. Let us denote this formal completion by $\hat{\mathcal{S}}_{3,[Y]}$. We show that every irreducible component of $\hat{\mathcal{S}}_{3,[Y]}$ is smooth with one exception when $a(Y)=3$. All the results in this section are well-known to the experts, but we include them here as the author could not find proofs in the literature.
\begin{lem}\label{LemIrredComp}
Let $Y$ be as above. There is a bijection
$$
\left\lbrace
\begin{tabular}{@{}c@{}}
Irreducible components\\
of $\hat{\mathcal{S}}_{3,[Y]}$
\end{tabular} \right\rbrace \longleftrightarrow
\left\lbrace 
\begin{tabular}{@{}c@{}}
Isogenies $E^3 \longrightarrow Y$ which\\
are the composition of a PFTQ
\end{tabular}\right\rbrace_{/\sim}$$
where the equivalence relation $\sim$ is defined as follows: $\pi_1, \pi_2: E^3 \longrightarrow Y$ are called equivalent if there exists an automorphism $\varphi$ of $E^3$ such that
$$\begin{tikzcd}
E^3 \arrow{d}{\varphi}\arrow{dr}{\pi_1}\\
E^3 \arrow{r}{\pi_2} & Y
\end{tikzcd}$$
commutes.
\end{lem}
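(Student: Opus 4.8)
The plan is to read off the irreducible components of $\hat{\mathcal{S}}_{3,[Y]}$ from the Li--Oort uniformization. Write $\mathcal{P}=\coprod_\eta\mathcal{P}_{3,\eta}$, the disjoint union over the finitely many classes of $\eta$ modulo $\Aut(E^3)$; it is smooth of pure dimension $2$ and carries the proper surjection $\varphi_{\mathrm{LO}}\colon\mathcal{P}\to\mathcal{S}_3$ of Theorem~\ref{ThmLiOortMap}, which by Corollary~\ref{CorUnram} is unramified away from the union $T\subseteq\mathcal{P}$ of the contracted curves $T_\eta$. Recall also that $\mathcal{S}_3$ is equidimensional of dimension $2$, so that $\widehat{\mathcal{O}}_{\mathcal{S}_3,[Y]}$ is equidimensional of dimension $2$ and its minimal primes correspond exactly to the irreducible components of $\hat{\mathcal{S}}_{3,[Y]}$. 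The first step is to match the $k$-points of the fibre $F:=\varphi_{\mathrm{LO}}^{-1}([Y])$ with the right-hand set. By Yoneda a $k$-point of $\mathcal{P}_{3,\eta}$ over $[Y]$ is a PFTQ $E^3=\mathcal{Y}_2\to\mathcal{Y}_1\to\mathcal{Y}_0$ for $\eta$ together with an isomorphism $(\mathcal{Y}_0,\eta_0)\cong(Y,\lambda_Y)$, up to isomorphism of PFTQs; its composite is an isogeny $E^3\to Y$ that is the composition of a PFTQ, and conversely such an isogeny $\pi$ recovers $\eta=\pi^*\lambda_Y$, the intermediate quotient $\mathcal{Y}_1$ (forced by conditions i)--iii) of Definition~\ref{DefPFTQ}, as one reads off from \cite[\S 9.4]{Li-Oort}), and, using that isomorphisms of PFTQs fix $\mathcal{Y}_2=E^3$, the class modulo the equivalence $\sim$. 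Carrying this out precisely is bookkeeping with the automorphism groups involved, and produces a bijection between $F$ and the right-hand set.

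Next I would treat the case $a(Y)<3$, equivalently $[Y]\notin\varphi_{\mathrm{LO}}(T)$ (the curves $T_\eta$ are contracted onto superspecial points, and $a=3$ cuts out exactly the superspecial locus). Then every $\xi\in F$ lies in the unramified locus of $\varphi_{\mathrm{LO}}$, so the induced map $\widehat{\mathcal{O}}_{\mathcal{S}_3,[Y]}\to\widehat{\mathcal{O}}_{\mathcal{P},\xi}\cong k[[u,v]]$ is surjective; its kernel is a prime of dimension $2$, hence --- $\mathcal{S}_3$ being equidimensional of dimension $2$ --- a minimal prime $\mathfrak{b}(\xi)$, i.e.\ an irreducible component of $\hat{\mathcal{S}}_{3,[Y]}$, which is moreover smooth since $\widehat{\mathcal{O}}_{\mathcal{S}_3,[Y]}/\mathfrak{b}(\xi)\cong k[[u,v]]$. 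Surjectivity of $\xi\mapsto\mathfrak{b}(\xi)$ comes from properness: a minimal prime $\mathfrak{p}$ gives a map $\Spec(\widehat{\mathcal{O}}_{\mathcal{S}_3,[Y]}/\mathfrak{p})\to\mathcal{S}_3$ from an integral scheme, and pulling $\varphi_{\mathrm{LO}}$ back along it, choosing a point over the generic point and specialising it to a point over the closed point, yields some $\xi\in F$ with $\mathfrak{b}(\xi)\subseteq\mathfrak{p}$; minimality forces equality.

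The \emph{main obstacle} is the injectivity of $\xi\mapsto\mathfrak{b}(\xi)$. Here I would use that an unramified morphism is, \'etale-locally on the target, a closed immersion: there is an \'etale neighbourhood $V\to\mathcal{S}_3$ of $[Y]$ over which $\mathcal{P}\setminus T$ becomes a disjoint union of closed subschemes of $V$, exhibiting near the point of $V$ above $[Y]$ one smooth $2$-dimensional ``sheet'' $W_\xi$ through each $\xi\in F$ and nothing else (using $[Y]\notin\varphi_{\mathrm{LO}}(T)$); since $\varphi_{\mathrm{LO}}$ is surjective the $W_\xi$ cover a neighbourhood of that point, so the irreducible components of $\hat{\mathcal{S}}_{3,[Y]}$ are exactly the germs of the distinct $W_\xi$. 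An equality $\mathfrak{b}(\xi_1)=\mathfrak{b}(\xi_2)$ then forces $W_{\xi_1}=W_{\xi_2}$ near that point, giving two distinct lifts $\xi_1\neq\xi_2$ of $\Spec W_{\xi_1}\to\mathcal{S}_3$ along $\varphi_{\mathrm{LO}}$; this contradicts that $\varphi_{\mathrm{LO}}$ restricted to $\mathcal{P}\setminus T$ is a monomorphism --- equivalently, that a PFTQ with non-superspecial bottom quotient is determined, up to $\sim$, by its image principally polarized abelian variety. Extracting this monomorphism property from Li \& Oort's explicit description \cite[\S 9.4]{Li-Oort} is, I expect, where the real content of the proof sits.

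It remains to handle $a(Y)=3$. Then $[Y]$ is superspecial, $F$ meets the contracted curves, and the dimension argument of the second paragraph breaks at the points $\xi\in F\cap T$. Using the blow-down picture $\mathcal{P}_{3,\eta}\cong\mathbb{P}_{\mathfrak{C}_H}(\mathcal{O}(1)\oplus\mathcal{O}(-1))\to\mathcal{S}_3$ of \cite[\S 9.4]{Li-Oort}, one checks separately that each $T_\eta$ meeting $F$ is contracted onto a single (in general non-smooth) branch of $\mathcal{S}_3$ at $[Y]$, and that all PFTQ-points lying on such a $T_\eta$ over $[Y]$ carry, up to $\sim$, one and the same composite isogeny $E^3\to Y$; together with the previous paragraphs applied to $F\setminus T$, this yields the bijection in general, with a unique non-smooth component --- occurring precisely when $a(Y)=3$.
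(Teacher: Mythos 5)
Your overall architecture (match the fibre $F=\varphi_{\mathrm{LO}}^{-1}([Y])$ with isogeny classes, use unramifiedness away from $T$ to produce smooth two-dimensional ``sheets'', use properness/surjectivity to see that every branch of $\hat{\mathcal{S}}_{3,[Y]}$ comes from some $\xi\in F$) is close in spirit to what is needed, but the step you yourself single out as the crux --- injectivity of $\xi\mapsto\mathfrak{b}(\xi)$ --- rests on a claim that is false. The map $\varphi_{\mathrm{LO}}$ restricted to $\mathcal{P}\setminus T$ is \emph{not} a monomorphism, and a PFTQ with non-superspecial bottom quotient is \emph{not} determined up to $\sim$ by its image: at a point $[Y]$ with $a(Y)=2$ there are in general several equivalence classes of isogenies $E^3\to Y$ ending a PFTQ, hence several points of $\mathcal{P}\setminus T$ in the fibre --- this multiplicity is exactly what the lemma is counting, and by the remark quoted from \cite{Li-Oort} (uniqueness of the component through $[Y]$ is \emph{equivalent} to $a=1$) it really occurs. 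So your proposed contradiction cannot be extracted from \cite[Section 9.4]{Li-Oort}. What is true, and what actually suffices, is the much weaker statement that the Li--Oort map is \emph{birational} onto the supersingular locus (\cite[Remark 6.4]{Li-Oort}, after adding a level-$N$ structure so that one works with schemes): two distinct sheets with the same image would force two preimages over the generic point of a two-dimensional component, contradicting degree one, not injectivity everywhere. The paper's proof packages precisely this: after adding level structure and (when $a(Y)=3$) discarding the one copy of $\mathcal{P}_{3,\eta}$ whose contracted curve $T$ hits $[Y]$, the restricted map over a small neighbourhood $U$ of $(Y,l_N)$ is proper and quasi-finite, hence finite, and birational with smooth (so normal) source; therefore it \emph{is} the normalization of $U$ (\cite[Lemma 035Q]{Stacks}), and since $U$ is excellent, normalization commutes with completion (\cite[Lemma 0C23]{Stacks}), so the points of the fibre biject with the minimal primes of the completion. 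This replaces both your surjectivity and injectivity arguments in one stroke and avoids any appeal to a monomorphism property.

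Two smaller points. First, your opening identification of $F$ with the right-hand set is only correct away from the superspecial case: when $a(Y)=3$ the fibre contains the whole contracted curve and is not finite, and the uniqueness of the intermediate quotient $\mathcal{Y}_1$ fails exactly when $\ker(\pi)=E^3[F]$ (this is the point of \cite[Equation 9.4.11]{Li-Oort}); you patch this at the end, but the statement ``the intermediate quotient is forced'' needs that exception from the start. Second, your treatment of $a(Y)=3$ (``each $T_\eta$ meeting $F$ is contracted onto a single branch'' and ``all PFTQ-points on $T_\eta$ over $[Y]$ carry the same composite isogeny'') is asserted rather than argued; in the paper this is handled by \emph{defining} the exceptional component $\Xi$ as the image of the formal completion along $T$ and removing the corresponding pair $(l_N',\eta)$ from the disjoint union before running the normalization argument, which is where the claim that exactly one component is unaccounted for gets justified.
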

\begin{proof}
We define a map 
$$f:\left\lbrace 
\begin{tabular}{@{}c@{}}
Isogenies $E^3 \longrightarrow Y$ which\\
are the composition of a PFTQ
\end{tabular}\right\rbrace_{/\sim} \longrightarrow \left\lbrace
\begin{tabular}{@{}c@{}}
Irreducible components\\
of $\hat{\mathcal{S}}_{3,[Y]}$
\end{tabular} \right\rbrace 
$$
as follows: Let $\pi: E^3\longrightarrow Y$ be an isogeny which is the composition of a PFTQ, say
$$E^3\longrightarrow Y_1 \longrightarrow Y_0=Y\,.$$
Denote by $\eta$ the pullback of the principal polarization on $Y$ along $\pi$. As a consequence of \cite[Equation 9.4.11]{Li-Oort} and the properties of the notion of rigid PFTQs in loc. cit. this PFTQ is uniquely determined by $\pi$ unless $\ker(\pi)=E^3[F]$. In the former case one gets a unique point $\xi\in \mathcal{P}_{3, \eta}$ such that $\varphi_{\text{LO}}(\xi)=[Y]$ under the map
$$\varphi_{\text{LO}} : \mathcal{P}_{3, \eta}\longrightarrow \mathcal{S}_3\,.$$
We define $f(E^3 \longrightarrow Y)$ to be the irreducible component $\hat{\mathcal{S}}_{3,[Y]}$ given by the image of the map induced by $\varphi_{\text{LO}}$ on the formal completion at $\xi$.
\par The latter case $\pi=F$ can only happen if $a(Y)=3$. In this case the map
$$\varphi_{\text{LO}} : \mathcal{P}_{3, \eta}\longrightarrow \mathcal{S}_3$$
contracts the curve $T$ to the point $[Y]$. We define $\Xi$ to be the irreducible component of $\hat{\mathcal{S}}_{3,[Y]}$ given by the image of the map induced by $\varphi_{\text{LO}}$ on the formal completion along $T$ and we put $f(E^3 \rightarrow Y)=\Xi$. The map $f$ is well-defined.
\par We have to show that $f$ is bijective. For that purpose we equip $\mathcal{S}_{3}$ with a level $N$ structure for some $N\geqslant 3,\, p\nmid N$ and denote the corresponding moduli space by $\mathcal{S}_{3, N}$. Then, $\mathcal{S}_{3, N}$ is a closed subscheme of the Siegel moduli scheme $\mathcal{A}_{g, N}$ (equipped with a level $N$ structure). Furthermore, as Li \& Oort show \cite[Section 13.14]{Li-Oort} their construction can be equipped with a level structure yielding a map
$$\varphi: \coprod \mathcal{P}_{3, \eta} \rightarrow \mathcal{S}_{3, N}$$
with the same properties as in Theorem \ref{ThmLiOortMap}. The disjoint union is taken over the finite set $\Lambda_N$ of all pairs $(l_N, \eta)$ where $l_N$ is a level $N$ structure on $E^3$ and $\eta$ is a polarization on $E^3$ satisfying $\ker(\eta)=E^3[p]$ (taken modulo automorphisms of $E^3$).
\par Furthermore $\varphi$ is birational by \cite[Remark 6.4]{Li-Oort}. Then, since $\mathcal{S}_{3, N} \longrightarrow \mathcal{S}_3$ is \'etale and $k$ is algebraically closed, it induces an isomorphism on completions. We choose $l_N$, a level $N$ structure on $Y$. It is now enough to describe the irreducible components of the completion of $\mathcal{S}_{3, N}$ at $(Y, l_N)$. In order to treat the cases $a(Y)=3$ and $a(Y) \leqslant 2$ simultaneously we will first define a subset $\Lambda'_N\subseteq \Lambda_N$ and a closed subscheme $X\subseteq \mathcal{S}_{3,N}$ depending on the $a$-number of $Y$. Assume first that $a(Y)=3$; then there is an isomorphism $Y\cong E^3$ as abelian varieties and there is a PFTQ whose composition is  $\pi:E^3\stackrel{F}{\longrightarrow} E^3 \cong Y $. This leads to the irreducible component $\Xi$ defined above. To show that $f$ is a bijection it suffices to show that it restricts to a bijection after removing $\Xi$ from the target set and $f^{-1}(\Xi)$ from the source.
\par To this end we consider $l'_N$ the level $N$ structure on $E^3$ obtained from $l_N$ by pulling back along the isomorphism
$$E^3[N] \longrightarrow Y[N]$$
induced by $\pi$. Furthermore denote by $\eta$ the polarization on $E^3$ defined as the pullback of the principal polarization on $Y$ via $\pi$. Consider the restricted map
$$\coprod_{\Lambda_N\setminus \{(l_N', \eta) \}} \mathcal{P}_{3, \eta} \rightarrow \mathcal{S}_{3, N}$$
and denote its image by $X$. The closed subscheme $X\subset \mathcal{S}_{3,N}$ has one irreducible component less than $\mathcal{S}_{3,N}$. As a last bit of notation we put $\Lambda_N'=\Lambda_N\setminus \{(l_N', \eta) \}$.
\par If instead $a(Y)\leqslant 2$, then we do not have to remove an irreducible component and we put $\Lambda_N'=\Lambda_N$ and $X=\mathcal{S}_{3,N}$.
\par Let us come back to the general situation ($a(Y)$ arbitrary). We have defined in both cases a map, also denoted $\varphi$ by slight abuse of notation
$$\varphi:\coprod_{\Lambda_N'} \mathcal{P}_{3, \eta} \rightarrow X\,.$$
The map $\varphi$ is birational, proper (because $\mathcal{P}_{3, \eta}$ is projective). Now we choose $U\subset X$, a Zariski open neighborhood of $(Y, l_N)$ small enough so that $U$ does not contain any points with $a=3$ except maybe $(Y, l_N)$ itself. Then we look at the cartesian diagram
$$
\begin{tikzcd}
U' \arrow{r}\arrow{d}{\varphi'} & \coprod_{\Lambda_N'} \mathcal{P}_{3, \eta}\arrow{d}{\varphi}\\
U \arrow{r} &X
\end{tikzcd}
$$
The map $\varphi'$ is proper. It is also quasi-finite by Theorem \ref{ThmLiOortMap} because $U$ does not contain any points with $a=3$ except maybe $(Y, l_N)$. Indeed, by construction we removed the only component of $\coprod_{\Lambda_N} \mathcal{P}_{3, \eta}$ containing the unique positive dimensional component of the fiber $\varphi^{-1}(Y, l_N)$. As a consequence we conclude that $\varphi'$ is proper, quasi-finite and thus finite.
\par On the other hand, $U'$ is an open subscheme of the smooth $k$-scheme $\coprod_{\Lambda_N'} \mathcal{P}_{3, \eta}$ and in particular $U'$ is normal. In summary, we have that $\varphi'$ is a finite birational map with normal source. By \cite[Lemma 035Q, (3)]{Stacks} this implies that $\varphi'$ is the normalization of $U$.
\par Now since $U$ is excellent, \cite[Lemma 0C23]{Stacks} implies that normalization commutes with completion. Finally \cite[Lemma 035Q, (2)]{Stacks} gives the desired bijection for the irreducible components of the completion.
\end{proof}
\begin{cor}\label{CorIrredComplSm}
In the notation as above.
\begin{itemize}
\item[i)] If $a(Y)\leqslant 2$, then all the irreducible components of $\hat{\mathcal{S}}_{3,[Y]}$ are smooth.
\item[ii)] If $a(Y)=3$, then all the irreducible components of $\hat{\mathcal{S}}_{3,[Y]}$ are smooth, except one.
\end{itemize}
\end{cor}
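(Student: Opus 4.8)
The plan is to combine the structural description of $\hat{\mathcal{S}}_{3,[Y]}$ obtained in Lemma~\ref{LemIrredComp} with the unramifiedness of $\varphi_{\text{LO}}$ established in Corollary~\ref{CorUnram}. First I would recall, from the proof of Lemma~\ref{LemIrredComp}, the precise shape of the irreducible components. The isogeny $\pi\colon E^3\stackrel{F}{\longrightarrow}E^3\cong Y$ with $\ker(\pi)=E^3[F]$ occurs only when $a(Y)=3$ and yields the component $\Xi$, obtained as the image of the formal neighborhood of the contracted curve $T$; this is the one component we do not treat. Every other component $V$ comes from a PFTQ whose composition $\pi\colon E^3\to Y$ satisfies $\ker(\pi)\neq E^3[F]$, and for such $\pi$ the proof of Lemma~\ref{LemIrredComp} produces a polarization $\eta$ with $\ker(\eta)=E^3[p]$ and a closed point $\xi\in\mathcal{P}_{3,\eta}\setminus T$ with $\varphi_{\text{LO}}(\xi)=[Y]$; moreover, using the finite birational morphism $\varphi'$ of that proof together with the fact that normalization commutes with completion, it identifies $\hat{\mathcal{O}}_{\mathcal{P}_{3,\eta},\xi}$ with the normalization of the complete local domain $\hat{\mathcal{O}}_{V}$. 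In particular the induced local homomorphism
$$\hat{\mathcal{O}}_{V}\longrightarrow\hat{\mathcal{O}}_{\mathcal{P}_{3,\eta},\xi}$$
is finite and injective, $\hat{\mathcal{O}}_{\mathcal{P}_{3,\eta},\xi}$ is regular of Krull dimension $2$, and both rings have residue field $k$.

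Next I would invoke Corollary~\ref{CorUnram}: $\varphi_{\text{LO}}$ is unramified at $\xi$, i.e. $\Omega^{1}_{\hat{\mathcal{O}}_{\mathcal{P}_{3,\eta},\xi}/\hat{\mathcal{O}}_{\mathcal{S}_3,[Y]}}=0$. Since $\hat{\mathcal{O}}_{V}$ is a quotient of $\hat{\mathcal{O}}_{\mathcal{S}_3,[Y]}$, it follows that $\Omega^{1}_{\hat{\mathcal{O}}_{\mathcal{P}_{3,\eta},\xi}/\hat{\mathcal{O}}_{V}}=0$ as well, so the finite homomorphism $\hat{\mathcal{O}}_{V}\to\hat{\mathcal{O}}_{\mathcal{P}_{3,\eta},\xi}$ is unramified. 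For a finite unramified local homomorphism whose source and target both have residue field $k$ this forces $\mathfrak{m}_{V}\,\hat{\mathcal{O}}_{\mathcal{P}_{3,\eta},\xi}=\mathfrak{m}_{\xi}$, hence $\hat{\mathcal{O}}_{\mathcal{P}_{3,\eta},\xi}\otimes_{\hat{\mathcal{O}}_{V}}k\cong k$; Nakayama's lemma then shows that $\hat{\mathcal{O}}_{\mathcal{P}_{3,\eta},\xi}$ is generated over $\hat{\mathcal{O}}_{V}$ by the image of $1$, so that $\hat{\mathcal{O}}_{V}\to\hat{\mathcal{O}}_{\mathcal{P}_{3,\eta},\xi}$ is an isomorphism. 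Thus $V\cong\Spec\hat{\mathcal{O}}_{\mathcal{P}_{3,\eta},\xi}$ is regular, hence smooth since $k$ is algebraically closed. This proves (i), and in the case $a(Y)=3$ it shows that every component other than $\Xi$ is smooth, which is (ii).

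Most of the work is the bookkeeping already carried out in Lemma~\ref{LemIrredComp}; the step requiring care is the last one, where one must check that unramifiedness in the sense of Corollary~\ref{CorUnram} is preserved both under completion and under the quotient $\hat{\mathcal{O}}_{\mathcal{S}_3,[Y]}\twoheadrightarrow\hat{\mathcal{O}}_{V}$, and then combine it with the finiteness coming from the normalization statement to run Nakayama's lemma. The possible non-reducedness of $\hat{\mathcal{O}}_{\mathcal{S}_3,[Y]}$ causes no trouble, since one only uses that $\hat{\mathcal{O}}_{V}=\hat{\mathcal{O}}_{\mathcal{S}_3,[Y]}/\mathfrak{p}$ for a minimal prime $\mathfrak{p}$ is a domain. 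I would make no attempt to decide whether the exceptional component $\Xi$ is genuinely singular; statement (ii) only records that it is the sole possible exception.
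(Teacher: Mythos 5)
Your argument is correct in substance and rests on the same two inputs as the paper (the component-by-component description coming out of the proof of Lemma \ref{LemIrredComp}, and the unramifiedness of $\varphi_{\text{LO}}$ away from $T$ from Corollary \ref{CorUnram}), but you close the argument by a genuinely different route. The paper does not use the finiteness/normalization structure at this point: it observes that the component $\mathcal{W}$, being the spectrum of a complete local domain over the algebraically closed field $k$, is integral and geometrically unibranch, and then invokes \cite[Th\'eor\`eme 18.10.1]{EGAIV} to promote the unramified map $\hat{\mathcal{P}}_{3,\eta,\xi}\to\mathcal{W}$ to an \'etale map, after which smoothness of $\mathcal{W}$ follows from smoothness of $\hat{\mathcal{P}}_{3,\eta,\xi}$ by descent of regularity along the flat local homomorphism. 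You instead extract from the proof of Lemma \ref{LemIrredComp} that $\hat{\mathcal{O}}_{\mathcal{W}}\to\hat{\mathcal{O}}_{\mathcal{P}_{3,\eta},\xi}$ is finite and injective (the normalization of a complete local domain), note that unramifiedness survives completion and the passage to the quotient $\hat{\mathcal{O}}_{\mathcal{S}_3,[Y]}\twoheadrightarrow\hat{\mathcal{O}}_{\mathcal{W}}$, and then run Nakayama to conclude that this map is an isomorphism. This is more elementary (no appeal to EGA IV 18.10.1 or to unibranch hypotheses) and yields the slightly stronger conclusion that each such component is actually isomorphic, not just \'etale-locally isomorphic, to the formal neighborhood in $\mathcal{P}_{3,\eta}$; the price is a heavier reliance on the normalization bookkeeping inside the lemma's proof, which the paper's route does not strictly need.

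One piece is missing for part ii). The statement says all components are smooth \emph{except one}, and the paper's proof accordingly verifies that the exceptional component $\Xi$ is indeed singular at $[Y]$, citing \cite[p. 59]{Li-Oort}: the map $\varphi_{\text{LO}}$ contracts a curve of positive genus to the point $[Y]$, so the corresponding branch cannot be smooth. You explicitly decline to decide this, so as written your argument only proves that $\Xi$ is the sole possible exception; to obtain the statement as formulated you should add this observation (or the citation) for $\Xi$.
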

\begin{proof}
Let $\mathcal{W}$ be an irreducible component of $\hat{\mathcal{S}}_{3,[Y]}$. Assume that either $a(Y)\leqslant 2$ or $a(Y)=3$ and $\mathcal{W}$ is not the component $\Xi$ discussed in the proof of the previous lemma.
\par Then we know that there exist a polarization $\eta$ on $E^3$ satisfying $\ker(\eta)=E^3[p]$ and a point $\xi\in \mathcal{P}_{3, \eta}$ such that the formal completion $\hat{\mathcal{P}}_{3, \eta, \xi}$ dominates $\mathcal{W}$ via the map
$$\varphi_{LO}: \mathcal{P}_{3, \eta} \longrightarrow \mathcal{S}_3\,.$$
Let us denote $\hat{\mathcal{P}}_{3, \eta, \xi}$ by $\mathcal{W}'$ and the induced map $\mathcal{W}'\rightarrow \mathcal{W}$ by $\varphi$.
\par Then, by our assumptions on $\mathcal{W}$, we have $\xi\notin T$ and thus $\varphi$ is unramified (Corollary \ref{CorUnram}). Furthermore $\mathcal{W}$ is integral and geometrically unibranched being an irreducible component of the spectrum of a reduced complete local algebra over an algebraically closed field. Therefore, \cite[Th\'eor\`eme 18.10.1]{EGAIV} implies that $\varphi$ is \'etale. Since $\mathcal{W}'$ is smooth, it follows that $\mathcal{W}$ is smooth as well.
\par On the other hand, the component $\Xi$ is singular at $[Y]$ by \cite[p. 59]{Li-Oort}. (The reason is that the map $\phi_\text{LO}$ contracts a curve of positive genus to the point $[Y]$, see also Theorem \ref{ThmLiOortMap}.)
\end{proof}
\section{Non-transversality criteria}
\subsection{General case}
Let $C$ be a hyperelliptic supersingular curve of genus $3$. Let $\mathcal{W}$ be an irreducible component of the completion of $\mathcal{S}_3$ at the moduli point $[C]$. Assume that $\mathcal{W}$ is smooth at $[C]$. In this section we derive a necessary and sufficient criterion for the non-transversality of the intersection of $\mathcal{W}$ and $\mathfrak{H}_3$ at $[C]$. We first describe the criterion geometrically. Later we specialize to the case $a=1$, where the criterion becomes equivalent to a condition on the Cartier-Manin matrix.
\par We begin with a lemma on PFTQs:
\begin{lem}\label{LemPFTQPull}
Let $E^3\longrightarrow Y_0$ be a PFTQ (we omit $Y_1$ from the notation). Then
\begin{enumerate}
\item[i)] $E^3\rightarrow Y_0$ factors through multiplication by $p$. Denote by
$$\psi: Y_0\longrightarrow E^3$$
the factored map.
\item[ii)] The images of the two maps
$$\psi^*: \HH^0(E^3, \Omega_{E^3})\longrightarrow \HH^0(Y_0, \Omega_{Y_0})$$
$$\psi^*: \HH^1(E^3, \mathcal{O}_{E^3})\longrightarrow \HH^1(Y_0, \mathcal{O}_{Y_0})$$
are both $1$-dimensional, unless $\psi=F$. (In the latter case they are both $0$.)
\item[iii)] Furthermore, the two images in ii) are orthogonal with respect to the natural pairing induced by the principal polarization on $Y_0$.
\end{enumerate}
\end{lem}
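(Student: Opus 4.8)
The plan is to translate all three assertions into contravariant Dieudonné theory and read them off the explicit module of Lemma~\ref{LemDieu}. Let $\xi\in\mathcal{P}_{3,\eta}$ be the point corresponding to the given PFTQ (which is unique unless the composite isogeny has kernel $E^3[F]$, cf.\ Lemma~\ref{LemIrredComp}), write $\pi\colon E^3\to Y_0$ for the composite, and put $M_2=D(E^3)$ and $\mathbb{M}_0=D(Y_0)$, the latter realised inside $M_2$ via $D(\pi)$. Specialising Lemma~\ref{LemDieu} at $\xi$ identifies $\mathbb{M}_0$ with the $W(k)$-submodule generated by $\gamma_1,\gamma_2,\gamma_3$ and $pM_2$, where now $x_1,x_2,t$ are fixed elements of $W(k)$ with $1+x_1^{p+1}+x_2^{p+1}\equiv 0\pmod p$; in particular $pM_2\subseteq\mathbb{M}_0$ and $M_2/\mathbb{M}_0$ has $k$-basis $\bar m_1,\bar m_2,\overline{Fm_0}$. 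For \emph{i)}: $pM_2\subseteq\mathbb{M}_0=D(Y_0)$ means that the cokernel of $D(\pi)$, hence $\ker(\pi)$, is killed by $p$, i.e.\ $\ker(\pi)\subseteq E^3[p]$; therefore $[p]_{E^3}$ factors through the quotient $\pi$, giving a unique isogeny $\psi\colon Y_0\to E^3$ with $\psi\circ\pi=[p]_{E^3}$, whence also $\pi\circ\psi=[p]_{Y_0}$. The only PFTQs not of this shape — those with $\ker(\pi)=E^3[F]$, i.e.\ the points of $T$, to which Lemma~\ref{LemDieu} does not apply — have $\pi=F$ and $\psi=V$; since $F^2+p=0$ this $\psi$ has kernel $E^3[F]$, the case excluded in ii) and iii).

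For \emph{ii)}: because $D(\pi)$ is the inclusion $\mathbb{M}_0\hookrightarrow M_2$, the relation $\psi\circ\pi=[p]$ forces $D(\psi)\colon M_2\to\mathbb{M}_0$ to be multiplication by $p$; reduced modulo $p$ this is $\psi^*$ on de Rham cohomology, namely $\bar x\mapsto\overline{px}$, with image $pM_2/p\mathbb{M}_0\cong M_2/\mathbb{M}_0$. Since $E$ is supersingular with $F^2+p=0$, the Hodge line of $E$ in $D(E)/p$ is spanned by $\overline{Fe}$ (it coincides with $\ker F=\im F=\ker V=\im V$), so $H^0(E^3,\Omega_{E^3})\subset M_2/p$ is spanned by $\overline{Fm_0},\overline{Fm_1},\overline{Fm_2}$ and $H^1(E^3,\mathcal{O}_{E^3})$ by $\bar m_0,\bar m_1,\bar m_2$. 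Using $\gamma_2,\gamma_3\in\mathbb{M}_0$ one gets $Fm_j\equiv x_j^p\,Fm_0\pmod{\mathbb{M}_0}$ for $j=1,2$, so $\psi^*\bigl(H^0(\Omega_{E^3})\bigr)=k\cdot\overline{pFm_0}$, which is nonzero because $Fm_0\notin\mathbb{M}_0$ and lies in $\omega_{Y_0}$ since $pFm_0=g_3$. Dually, using $\gamma_1$ together with the generators $px_1m_0-pm_1$ and $px_2m_0-pm_2$ of $\omega_{Y_0}$ from the Kodaira--Spencer computation and the Hermitian relation, the classes $\overline{pm_0},\overline{pm_1},\overline{pm_2}$ become pairwise proportional modulo $\omega_{Y_0}$, spanning a line which one checks is nonzero (after possibly permuting coordinates so that $x_2\neq0$). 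Hence $\psi^*\bigl(H^1(\mathcal{O}_{E^3})\bigr)$ is one-dimensional as well. When $\psi=F$ (equivalently $\psi=V=-F$) the computation degenerates and both images are $0$, since $V^*$ kills $H^0(\Omega)$ and $H^1(\mathcal{O})$ of $E^3$.

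For \emph{iii)}: both $\psi^*\bigl(H^0(\Omega_{E^3})\bigr)$ and (a lift of) $\psi^*\bigl(H^1(\mathcal{O}_{E^3})\bigr)$ lie in $\im\bigl(\psi^*\colon H^1_{\mathrm{dR}}(E^3)\to H^1_{\mathrm{dR}}(Y_0)\bigr)=pM_2/p\mathbb{M}_0$, and $\omega_{Y_0}$ is Lagrangian for the pairing on $H^1_{\mathrm{dR}}(Y_0)$ attached to $\lambda_0$; so it suffices to show that $pM_2/p\mathbb{M}_0$ is totally isotropic for that pairing. The inputs are: the identity $\psi^\vee\eta\psi=p^2\lambda_0$ (immediate from $\eta=\pi^\vee\lambda_0\pi$ and $\pi\psi=[p]$), which together with Li \& Oort's normalisation of the $\eta$-pairing on $M_2$ as $\begin{pmatrix}0&p\\-p&0\end{pmatrix}^{\oplus 3}$ identifies the $\lambda_0$-pairing on $\mathbb{M}_0$ with $p^{-2}$ times the restriction of the $\eta$-pairing; and the fact that the $\eta$-pairing takes all its values in $pW$. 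Combining these, for $u,v\in M_2$ one obtains $\langle pu,pv\rangle_{\lambda_0}=p^{-2}\langle pu,pv\rangle_{\eta}=\langle u,v\rangle_{\eta}\in pW$, so the pairing vanishes on $pM_2/p\mathbb{M}_0$ modulo $p$, as wanted. One can argue more invariantly instead, noting that $\ker(\psi)=\pi(E^3[p])$ is Lagrangian for the Weil pairing $e^{\lambda_0}$ — which follows from $E^3[p]=\ker(\eta)=\ker(\pi^\vee\lambda_0\pi)$ forcing $\lambda_0(\ker\psi)$ to be the exact annihilator of $\ker\psi$ — and that $pM_2/p\mathbb{M}_0$ is precisely the subspace of $H^1_{\mathrm{dR}}(Y_0)$ cut out by $\ker(\psi)$, hence Lagrangian by compatibility of the two pairings.

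The step I expect to cost the most effort is the pairing bookkeeping in iii): making precise, within contravariant Dieudonné theory with its duality twists and sign conventions, the identification of the principal-polarisation pairing on $D(Y_0)$ with the appropriate rescaling of Li \& Oort's pairing on $D(E^3)$, so that the displayed congruence is literally correct. A second, more routine matter is treating the locus $\psi=F$ (the points of $T$) by hand, since Lemma~\ref{LemDieu} is stated only for $\xi\notin T$.
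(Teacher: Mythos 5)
Your argument is correct and follows essentially the same route as the paper: both read the images off the explicit Dieudonn\'e module of Lemma~\ref{LemDieu} (identifying them with $\lspan(pFm_0)=\lspan(g_3)$ and $\lspan(pm_0)=\lspan(c_1)$) and deduce orthogonality from the fact that the suitably rescaled $\eta$-pairing restricted to $pM_2$ takes values in $pW(k)$, hence vanishes mod $p$. The only cosmetic differences are that the paper proves i) in one line from $\ker(\eta)=E^3[p]$ (via $\eta=\pi^\vee\lambda_0\pi$), avoiding your detour through the Dieudonn\'e module and the separate case analysis along $T$, and for iii) it simply evaluates the single pairing $\langle pm_0, pFm_0\rangle=p$ rather than showing that the full image of $\psi^*$ on de Rham cohomology is isotropic.
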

\begin{proof}
The first assertion follows from the fact that the polarization $\eta$ in Definition \ref{DefPFTQ} satisfies $\ker(\eta)=E^3[p]$. For the assertion ii) we use the explicit description of the Dieudonn\'e modules. Recall that $M_2$ was the Dieudon\'e of $E^3$ as discussed in Section \ref{SecDieu}. We will denote by $M_0$ the Dieudonn\'e module of $Y_0$ viewed as a submodule of $M_2$ via the map induced by $E^3\rightarrow Y_0$.
\par Then the map $\psi: Y_0\rightarrow E^3$ induces on Dieudonn\'e modules the inclusion $p M_2 \subset M_0$. The map
$$\psi^*: \HH^0(E^3, \Omega_{E^3})\longrightarrow \HH^0(Y_0, \Omega_{Y_0})$$
is identified with
$$\frac{V (pM_2)}{p(pM_2)  }\rightarrow \frac{V M_0}{pM_0}$$
under the isomorphism from \cite[Corollary 5.11]{Oda}.
From the explicit description of the Dieudonn\'e modules in Section \ref{SecDieu} (which uses the assumption that $\psi \neq F$) one readily sees that the image of $\psi^*$ is one-dimensional and the basis corresponds to the element $g_3=pFm_0$. Similarly the map 
$$\psi^*: \HH^1(E^3, \mathcal{O}_{E^3})\longrightarrow \HH^1(Y_0, \mathcal{O}_{Y_0})$$
equals the map
$$\frac{(pM_2)}{V(pM_2)  }\rightarrow \frac{M_0}{VM_0}\,.$$
whose image has basis $c_1=p m_0$. This shows ii).
\par For iii) we need to make the pairing between $\HH^0(Y_0, \Omega_{Y_0})$ and $\HH^1(Y_0, \mathcal{O}_{Y_0})$ explicit. Indeed, the polarization on $\eta$ induces an alternating non-degenerate pairing
$$M_2^t \times M_2^t \longrightarrow W(k)\,.$$
This gives an isomorphism $M_2^t\!\left[\frac{1}{p} \right] \stackrel{\sim}{\longrightarrow} M_2\!\left[\frac{1}{p} \right]$. Therefore, we get a pairing
$$M_2 \times M_2 \longrightarrow W(k)\!\left[\frac{1}{p} \right]\,.$$
The restriction to $M_0$ again takes values in $W(k)$ because $\eta$ descends to a principal polarization on $Y_0$. Thus we get a pairing
$$\langle\cdot, \cdot \rangle:M_0\times M_0 \longrightarrow W(k)\,.$$
From this and the explicit description of the pairing $\langle \cdot , \cdot \rangle$ in Section \ref{SecDieu} one computes $\langle p m_0, pFm_0\rangle=p$. Claim iii) follows.
\end{proof}
Now let $C$ be as at the start of this chapter. The choice of an irreducible component $\mathcal{W}$ of the completion of $\mathcal{S}_3$ at the moduli point $[C]$ is equivalent to the choice of a PFTQ ending in $\Jac(C)$ by Lemma \ref{LemIrredComp}. This gives us a map
$$\psi: \Jac(C) \longrightarrow E^3$$
as in Lemma \ref{LemPFTQPull}. Recall that we assumed that $\mathcal{W}$ was non-singular at $[C]$, which by Corollary \ref{CorIrredComplSm} is equivalent to $\psi\neq F$. 
\begin{definition}\label{DefFil}
We define the following filtration on $V=\HH^0(C, \Omega_C)$:
\begin{itemize}
\item $V_0=V$.
\item $V_1$ is defined to be the orthogonal complement of
$$\im\! \left(\psi^*: \HH^1(E^3, \mathcal{O}_{E^3})\longrightarrow \HH^1(\Jac(C), \mathcal{O}_{\Jac(C)})\cong \HH^1(C, \mathcal{O}_{C}) \right)$$
with respect to the Serre duality pairing.
\item $V_2$ is defined to be
$$V_2=\im\! \left(\psi^*: \HH^0(E^3, \Omega_{E^3})\longrightarrow \HH^0(\Jac(C), \Omega_{\Jac(C)}) \cong \HH^0(C, \Omega_C)\right)$$
\end{itemize}
\end{definition}
Notice that Lemma \ref{LemPFTQPull} implies that $V_2\subset V_1$ and $\dim(V_i)=3-i,\, i=0,1,2$.
\par Consider now the canonical map $\varphi_{\text{can}} : C \longrightarrow \mathbb{P}(\HH^0(C, \Omega_C))\cong \mathbb{P}^2$. The image of $\varphi_{\text{can}}$ is a conic because $C$ is hyperelliptic.
\par On the other hand, the filtration $V_2\subset V_1\subset V_0=\HH^0(C, \Omega_C)$ defines a point $P$ and a line $l$ containing $P$ in $ \mathbb{P}(\HH^0(C, \Omega_C))$:
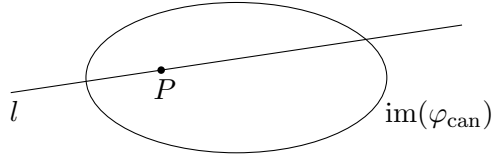
\begin{figure}[H]\label{FigTouch}
\begin{tikzpicture}
\draw (0,0) ellipse (2cm and 1cm);
\node at (2.7,-0.5) {$\im(\varphi_{\text{can}})$};
\fill (-1,0.1) circle (0.05cm and 0.05cm);
\node at (-1,0.1) [label={[shift={(0.05,-0.65)}] $P$}] {};
\draw (-3, -0.2) -- (3, 0.7);
\node at (-3, -0.2)[label={[shift={(0.05,-0.65)}] $l$}] {};
\end{tikzpicture}
\caption{Generic picture}
\end{figure}
We are now ready to state Theorem A from the introduction. Let $\mathcal{W}$ be as above. By Lemma \ref{LemIrredComp} there is a PFTQ $\pi:E^3 \rightarrow \Jac(C)$ corresponding to $\mathcal{W}$. Furthermore the smoothness assumption on $\mathcal{W}$ is equivalent to saying that $\pi \neq F$. We can thus define the line $l$ and the point $P$ via the previous definition. 
\begin{thm}\label{ThmTransGen}
The following are equivalent:
\begin{enumerate}
\item[i)] The component $\mathcal{W}$ meets $\mathfrak{H}_3$ non-transversally at $[C]$.
\item[ii)] The line $l$ touches $\im(\varphi_{\text{can}})$ at the point $P$.
\end{enumerate}
\end{thm}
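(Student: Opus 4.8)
The plan is to reduce the statement to linear algebra in the space of conics on $\mathbb{P}(V)$, $V=\HH^0(C,\Omega_C)$, and then to translate the resulting conditions into classical projective geometry. Since $\mathcal{W}$ is smooth at $[C]$, the attached PFTQ is not the relative Frobenius, so the corresponding point $\xi\in\mathcal{P}_{3,\eta}$ lies off $T$; hence $\varphi_{\text{LO}}$ is unramified at $\xi$ by Corollary \ref{CorUnram} and in fact étale onto $\mathcal{W}$ near $\xi$ (as in the proof of Corollary \ref{CorIrredComplSm}), so the Kodaira--Spencer map identifies $T_\xi\mathcal{P}_{3,\eta}$ with $T_{[C]}\mathcal{W}$. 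Thus $T_{[C]}\mathcal{W}=\lspan(\kappa(\delta_1),\kappa(\delta_2))\subseteq S^2 V^\vee=T_{[C]}\mathfrak{A}_3$. On the other hand, by Lemma \ref{LemHypDefo} the subspace $T_{[C]}\mathfrak{H}_3$ is the orthogonal complement of $\ker(\mu\colon S^2 V\to\HH^0(C,\Omega_C^{\otimes 2}))$; since $\dim\mathfrak{H}_3=5=\dim S^2V^\vee-1$, this kernel is one-dimensional, spanned by the quadratic form $r$ cutting out the conic $Q:=\im(\varphi_{\text{can}})$, and $T_{[C]}\mathfrak{H}_3$ is a hyperplane. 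Consequently $\mathcal{W}$ meets $\mathfrak{H}_3$ non-transversally at $[C]$ if and only if $T_{[C]}\mathcal{W}\subseteq T_{[C]}\mathfrak{H}_3$, i.e. if and only if $\langle r,\kappa(\delta_1)\rangle=\langle r,\kappa(\delta_2)\rangle=0$; in other words, both Kodaira--Spencer quadrics are \emph{apolar} to $Q$.

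The second step is to describe these two quadrics geometrically, combining Lemma \ref{LemKS} with the pairing computations of Lemma \ref{LemPFTQPull} and the filtration of Definition \ref{DefFil}. By Lemma \ref{LemPFTQPull} and its proof, $V_2=\lspan(g_3)$ corresponds to $P$, and a short check with the polarization pairing on $\mathcal{H}^1_{\text{dR}}$ modulo $p$ shows that $c_1$ pairs to zero with $g_2$ and $g_3$ but not with $g_1$, so $V_1=\lspan(g_2,g_3)$ corresponds to $l$. Reading off Lemma \ref{LemKS}, the symmetric map $\kappa(\delta_1)$ annihilates $g_2,g_3$ and has image $\lspan(c_1)$, so it has rank $1$ with radical $V_1$: hence $\kappa(\delta_1)$ is the double line $2l$. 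Likewise $\kappa(\delta_2)$ has rank $2$ with radical $\lspan(g_3)=V_2$, and another short computation with the pairing gives $\kappa(\delta_2)|_{V_1}=0$; therefore $\kappa(\delta_2)$ is a pair of distinct lines $l\cup l'$ meeting at $P$, one component being $l$ itself.

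Finally I would conclude by the pole--polar calculus. The conics apolar to $Q$ form a hyperplane in the $\mathbb{P}^5$ of all conics, and $[2l]\neq[l\cup l']$, so the pair of conditions $\langle r,2l\rangle=\langle r,l\cup l'\rangle=0$ holds if and only if the whole line joining $[2l]$ and $[l\cup l']$ lies in that hyperplane, that is, if and only if \emph{every} degenerate conic $l\cup m$ with $m$ a line through $P$ is apolar to $Q$. By the classical dictionary, $\langle r,\ell\cdot m\rangle=0$ says exactly that the lines $l$ and $m$ are conjugate with respect to $Q$; requiring this for all $m$ through $P$ means that the pencil of lines through $P$ coincides with the pencil of lines conjugate to $l$, i.e. that $P$ is the pole of $l$, i.e. that $l$ is the polar line of $P$ with respect to $Q$. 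Since $P$ always lies on $l$, this forces $P$ to lie on its own polar, hence $P\in Q$, and then the polar of $P$ is precisely the tangent line $T_PQ$. Thus non-transversality is equivalent to $l=T_PQ$, i.e. to $l$ touching $Q$ at $P$; the converse direction just runs this chain of equivalences backwards.

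The step I expect to be the main obstacle is the bookkeeping in the second paragraph: one must track the (alternating) polarization pairing on $\mathcal{H}^1_{\text{dR}}$ modulo $p$ carefully enough to confirm that $\kappa(\delta_1)$ and $\kappa(\delta_2)$ really are the symmetric forms claimed --- in particular that $\{g_2,g_3\}$ spans $V_1$, that $\kappa(\delta_2)$ has rank exactly $2$, and that it vanishes identically on $V_1$ --- and then, in the third paragraph, to make the apolarity--conjugacy dictionary fully precise in the degenerate cases $2l$ and $l\cup l'$, so that the conclusion genuinely pins the point of tangency down to $P$ rather than merely asserting that $l$ is tangent to $Q$ somewhere.
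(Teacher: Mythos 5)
Your proposal is correct and follows essentially the same route as the paper: the same identification of $T_{[C]}\mathcal{W}$ with $\lspan(\kappa(\delta_1),\kappa(\delta_2))$ via Corollary \ref{CorUnram}, the same reduction through Lemma \ref{LemHypDefo} to orthogonality against the one-dimensional kernel of $S^2 V\to \HH^0(C,\Omega_C^{\otimes 2})$, the same pairing computations from Lemmas \ref{LemKS} and \ref{LemPFTQPull}, with your pole--polar/apolarity calculus playing the role of the paper's ``matrix shape of $\Upsilon$'' argument. One bookkeeping caveat: with the convention forced by $Q$ being cut out by an element of $S^2V$, the filtration gives $P\leftrightarrow V_1$ and $l\leftrightarrow V_2$ (not the other way around, as you write), so the Kodaira--Spencer classes $c_1^2$ and $c_1\cdot\mu$ are really the degenerate \emph{dual} conics (the point-pairs $\{P,P\}$ and $\{P,X\}$ with $X\in l$) rather than the line-pairs $2l$ and $l\cup l'$; since you dualize the apolarity dictionary consistently (conjugate lines in place of conjugate points), the two swaps cancel and your final condition ``$l$ is the polar of $P$, hence $P\in Q$ and $l=T_PQ$'' is exactly the paper's conclusion.
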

\begin{proof}
The tangent space of $\mathcal{W}$ at the point $[C]$ is two-dimensional with basis given by the two Kodaira-Spencer classes $\kappa(\delta_1),\kappa(\delta_2)$ as in Lemma \ref{LemKS}. We can interpret  $\kappa(\delta_1),\kappa(\delta_2)$ as elements in $S^2 \HH^0(C, \Omega_C)^\vee$. By Lemma \ref{LemHypDefo}, the component $\mathcal{W}$ meets $\mathfrak{H}_3$ non-transversally at $[C]$ if and only if $\kappa(\delta_1),\kappa(\delta_2)$ vanish on the kernel of
$$S^2 \HH^0(C, \Omega_C) \longrightarrow \HH^0(C, \Omega_C^{\otimes 2})\,.$$
Since $g=3$ this kernel is 1-dimensional and generated by the quadratic form cutting out the conic $\im(\varphi_{\text{can}})$.
\par Thus we need to show that the orthogonality of this quadratic form to the Kodaira-Spencer classes $\kappa(\delta_1),\kappa(\delta_2)$ has the desired geometric interpretation.
\par Indeed, recall that we computed in Lemma \ref{LemKS}
$$\kappa(\delta_1)(g_1)=-p m_0=-c_1,\qquad \kappa(\delta_1)(g_2)=\kappa(\delta_1)(g_3)=0$$
$$\kappa(\delta_2)(g_1)\in \lspan(c_1, c_2) ,\qquad \kappa(\delta_2)(g_2)=\tx_2^p pm_0,\qquad\kappa(\delta_2)(g_3)=0\,$$
under the additional assumption that $x_2$ is not zero at $\xi$ which we can assume without loss of generality.
\par We will now compute the Serre duality pairing
$$\langle \cdot, \cdot \rangle : \HH^0(C, \Omega_C) \times \HH^1(C, \mathcal{O}_C) \longrightarrow k$$
on some of the given basis elements $g_1, g_2, g_3$ resp. $c_1, c_2, c_3$. Indeed this pairing equals the pairing induced by the principal polarization on ${Y_0\cong \Jac(C)  }$. This has the explicit description given in the proof of Lemma \ref{LemPFTQPull}.
\par Thus we can compute
$$\langle g_2, c_1 \rangle=\langle g_3, c_1 \rangle=\langle g_3, c_2 \rangle = 0\,. $$
This implies that $\lspan(c_1)^\perp=\lspan(g_2, g_3)$. But that means that $\kappa(\delta_1) $ is the unique map (up to scalars)
$$  \HH^0(C, \Omega_C) \longrightarrow \HH^1(C, \mathcal{O}_C)$$
that vanishes on the two-dimensional vector space $\lspan(c_1)^\perp$ and has image $\lspan(c_1)$.
\par It is not difficult to see that this means that the quadratic forms in $S^2 \HH^0(C, \Omega_C)$ orthogonal to $\kappa(\delta_1)$ are exactly those vanishing at the point in
$$\mathbb{P}(\HH^0(C, \Omega_C))$$
corresponding to $\lspan(c_1)$. But that point is equal to $P$ by its definition. Therefore, we conclude that $\kappa(\delta_1)$ represents a tangent direction to the hyperelliptic locus if and only if $P\in \im(\varphi_{\text{can}})$.
\par Next, we analyze $\kappa(\delta_2)$. Indeed, we showed above
$$\lspan(c_1, c_2)^\perp=\lspan(g_3)\,.$$
Therefore
\begin{align}
\im\!\left(\kappa\left(\delta_2\right)\right)\subseteq \lspan\!\left(c_1, c_2\right)\,,\\
\kappa\left(\delta_2\right)\! \left(\lspan\!\left(c_1\right)^\perp\right)\subseteq \lspan\!\left(c_1\right)\,,\\
\kappa\left(\delta_2\right)\! \left(\lspan\!\left(c_1,c_2\right)^\perp\right)=0\,. 
\end{align}
Let us denote by $\Upsilon\subset \Hom^\text{sym}(\HH^0(C, \Omega_C )\longrightarrow \HH^1(C, \mathcal{O}_C)  )$ the linear subspace of maps satisfying the conditions (1)-(3) from above. One has ${\dim(\Upsilon)=2}$ because $\Upsilon$ is equal to the vector space of maps whose matrix is of the form
$$X=\begin{pmatrix}
\ast & \ast & 0\\
\ast & 0 & 0\\
0 & 0 & 0
\end{pmatrix},\, X=X^T$$
with respect to the basis $c_1, c_2, c_3$ on $\HH^1(C, \mathcal{O}_C)$ and its dual basis on $\HH^0(C, \Omega_C)$.
\par But we also have $\kappa(\delta_1)\in \Upsilon$. Since $\kappa(\delta_1), \kappa(\delta_2)$ are linearly independent, we conclude that $\Upsilon=\lspan(\kappa(\delta_1), \kappa(\delta_2))$.
\par Now let $Q\in S^2 \HH^0(C, \Omega_C)$ be arbitrary. Then $Q$ is contained in the orthogonal complement of $\Upsilon$ if and only if $\mathcal{V}(Q)$ is a conic touching the line $\mathcal{V}(g_3) $ at the point $P$. (This can be seen from the matrix representation above.) But that line equals $l$ by definition.
\par Thus we may conclude: The component $\mathcal{W}$ meets $\mathfrak{H}_3$ non-transversally at $[C]$ if and only of the line $l$ touches $\im(\varphi_{\text{can}})$ at the point $P$. The theorem follows.
\end{proof}
\subsection{Simplifications for $a=1$}
We will now specialize to the case $a=1$. By \cite[Remark on p. 38]{Li-Oort} this is equivalent to saying that there is a unique component of $\mathcal{S}_3$ passing through our moduli point.
\par Our first lemma will describe the filtration $V_2\subset V_1\subset V=\HH^0(C, \Omega_C)$ in terms of the Cartier operator
$$\mathcal{C}: \HH^0(C, \Omega_C)\longrightarrow \HH^0(C, \Omega_C)$$
as follows:
\begin{lem}\label{LemCarta1}
Assume further that $a=1$. Let $E^3\longrightarrow \Jac(C)$ be the unique PFTQ ending at $\Jac(C)$. Let further $V_2\subset V_1\subset V=\HH^0(C, \Omega_C)$ be as in Definition \ref{DefFil}.
\par Then $V_2=\ker(\mathcal{C})$ and $V_1=\ker\! \left(\mathcal{C}^2\right)$.
\end{lem}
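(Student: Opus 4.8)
The plan is to reduce the statement to Dieudonn\'e/cohomological bookkeeping and then exploit the functoriality of the Cartier operator along the map $\psi\colon\Jac(C)\to E^3$ attached to the (unique) PFTQ, together with the supersingularity of $E$ and the hypothesis $a=1$. Write $M_0=D(\Jac(C))$, viewed as in Lemma \ref{LemPFTQPull} as a submodule of $M_2$. Under the canonical identifications $\HH^0(C,\Omega_C)\cong\HH^0(\Jac(C),\Omega^1)=VM_0/pM_0$ and $\HH^1(C,\mathcal O_C)\cong\HH^1(\Jac(C),\mathcal O)=M_0/VM_0$ (see \cite{Oda}), the Cartier operator $\mathcal C$ corresponds to the $p^{-1}$-linear endomorphism of $VM_0/pM_0$ obtained by restricting $V\colon M_0\to M_0$ (note $V(VM_0)\subseteq VM_0$ and $V(pM_0)\subseteq pM_0$), and the Frobenius (Hasse--Witt operator) $\bar F$ on $M_0/VM_0$ is the $p$-linear endomorphism induced by $F$; moreover Serre duality makes $\mathcal C$ and $\bar F$ adjoint, so that $\ker(\mathcal C^i)=(\im(\bar F^i))^{\perp}$ for every $i$. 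Consequently $\dim\ker\mathcal C=\dim\ker\bar F=a=1$ (the $a$-number is the corank of the Cartier--Manin matrix); and since $\Jac(C)$ is supersingular, $\bar F$ is nilpotent, hence $\bar F^{3}=0$, and rank--nullity for $p$-linear maps gives $\rk\bar F=3-a=2$, $\bar F^{2}\neq 0$, $\dim\im(\bar F^{2})=1$.

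Next I would invoke functoriality. Since $E$ is supersingular, $\mathcal C$ vanishes on $\HH^0(E^3,\Omega^1)$ and $\bar F$ vanishes on $\HH^1(E^3,\mathcal O)$; as $\psi^*$ commutes with $\mathcal C$ and with $\bar F$, it follows that $\mathcal C$ vanishes on $V_2=\im\big(\psi^*\colon\HH^0(E^3,\Omega^1)\to\HH^0(C,\Omega_C)\big)$ and that $\bar F$ vanishes on $W:=\im\big(\psi^*\colon\HH^1(E^3,\mathcal O)\to\HH^1(C,\mathcal O_C)\big)$. By Lemma \ref{LemPFTQPull} — using $\psi\neq F$, which holds because $a\neq 3$ — both $V_2$ and $W$ are one-dimensional, so comparing with $\dim\ker\mathcal C=1=\dim\ker\bar F$ yields $V_2=\ker\mathcal C$ and $W=\ker\bar F$. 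For $V_1$: by Definition \ref{DefFil} one has $V_1=W^{\perp}=(\ker\bar F)^{\perp}$; on the other hand $\im(\bar F^{2})\subseteq\ker\bar F$ because $\bar F^{3}=0$, and both spaces are one-dimensional, hence $\im(\bar F^{2})=\ker\bar F$, and therefore $\ker(\mathcal C^2)=(\im(\bar F^{2}))^{\perp}=(\ker\bar F)^{\perp}=V_1$. This gives $V_2=\ker(\mathcal C)$ and $V_1=\ker(\mathcal C^{2})$.

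The delicate point is the dictionary itself: identifying $\mathcal C$ with the operator induced by $V$ (with the correct Frobenius twist) and verifying the adjunction $\ker(\mathcal C^i)=(\im(\bar F^i))^{\perp}$ under Serre duality. If one prefers to avoid it, everything can instead be read off from the explicit generators $\gamma_1,\gamma_2,\gamma_3,p\mathbb M_2$ of $M_0\subset M_2$ from Lemma \ref{LemDieu}: a short computation using $V\!\cdot\! Fm_i=pm_i$, $Vm_i=-Fm_i$ and the Hermitian relation $1+x_1(\xi)^{p+1}+x_2(\xi)^{p+1}=0$ (together with $(u+v)^p=u^p+v^p$) gives $V(pFm_0)\in pM_0$, so $\lspan(g_3)=V_2\subseteq\ker\mathcal C$, and $F^{2}\gamma_1\equiv-\big(1+x_1(\xi)^{p^{3}+1}+x_2(\xi)^{p^{3}+1}\big)\,pm_0\pmod{VM_0}$, whence $\im(\bar F^{2})=\lspan(c_1)$ — the coefficient being nonzero precisely because $a=1$ forces $\bar F^{2}\neq 0$. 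Either way, the role of the hypothesis $a=1$ is essential: it is what pins the chain $\ker\mathcal C\subsetneq\ker(\mathcal C^2)\subsetneq\HH^0(C,\Omega_C)$ to dimensions $1<2<3$ and matches it with $V_2\subset V_1\subset V$.
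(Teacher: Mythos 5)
Your proof is correct and follows essentially the same route as the paper: naturality of the Cartier/Frobenius operators under $\psi^*$, their vanishing on the cohomology of the supersingular $E^3$, the dimension counts forced by $a=1$, and the Serre-duality adjunction $\langle Fx,y\rangle=\langle x,\mathcal{C}(y)\rangle^p$. The only real difference is cosmetic: for $V_1$ you dualize and work with $\bar F$ on $\HH^1(C,\mathcal{O}_C)$, identifying $\im(\bar F^2)=\ker(\bar F)=W$ and then applying $\ker(\mathcal{C}^2)=(\im(\bar F^2))^\perp$, whereas the paper uses the same adjunction to show $V_1$ is $\mathcal{C}$-stable and concludes via nilpotency of $\mathcal{C}$ restricted to the two-dimensional $V_1$.
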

\begin{proof}
The assumption $a=1$ implies that $\dim(\ker(\mathcal{C}))=1$. Therefore, it suffices to show that $V_2\subseteq \ker(\mathcal{C})$. This follows from
$$V_2=\im\! \left(\pi^*: \HH^0(E^3, \Omega_{E^3})\longrightarrow \HH^0(\Jac(C), \Omega_{\Jac(C)})\cong \HH^0(C, \Omega_C) \right)$$
and the naturality of the Cartier operator.
\par To show that $V_1=\ker\! \left(\mathcal{C}^2\right)$ it suffices again to prove $V_1\subseteq\ker(\mathcal{C}^2)$. Recall that $V_1$ is defined to be the orthogonal complement of
$$\im\! \left(\pi^*: \HH^1(E^3, \mathcal{O}_{E^3})\longrightarrow \HH^1(\Jac(C), \mathcal{O}_{\Jac(C)})\cong \HH^1(C, \mathcal{O}_{C}) \right)$$
with respect to the Serre duality pairing. But since $F$ vanishes on this image, the identity
$$\langle Fx, y\rangle = \langle x, \mathcal{C}(y) \rangle^p $$
shows that $V_1$ is closed under $\mathcal{C}$. Now $\mathcal{C}$ is nilpotent as a consequence of the supersingularity of $C$. Therefore,
$$V_1\subseteq\ker\! \left(\mathcal{C}^{\dim(V_1)}\right)=\ker\! \left(\mathcal{C}^2\right)\,.$$
This proves the lemma.
\end{proof}
We can now state a reformulation of Theorem \ref{ThmTransGen} in the special case $a=1$ leading to Theorem B from the introduction:
\begin{thm}\label{ThmTransa1}
Let $C$ be a hyperelliptic supersingular curve of genus $3$ with $a(\Jac(C))=1$. Denote the hyperelliptic involution by $\tau: C\longrightarrow C$. Then the following are equivalent:
\begin{enumerate}
\item[i)] The supersingular locus $\mathcal{S}_3$ meets $\mathfrak{H}_3$ non-transversally at $[C]$.
\item[ii)] There exists a point $P\in C$ such that the filtration $V_2\subset V_1\subset V_0=\HH^0(C, \Omega_C)$ of Lemma \ref{LemCarta1} agrees with the filtration
$$\HH^0\left(C, \Omega_C(-2 P-2 \tau(P) \right)\subset \HH^0\left(C, \Omega_C(- P-\tau(P) \right)\subset \HH^0\left(C, \Omega_C \right)$$
\item[iii)] There exists a point $P\in C$ such that for one (equivalently: for all) hyperelliptic equation
$$y^2=f(x)$$
of $C$ that satisfies $x(P)=0$ the corresponding Cartier-Manin matrix (formed by extracting suitable coefficients of $f^\frac{p-1}{2}$) has the shape
$$\begin{pmatrix}
0 & 0 & 0\\
\ast & 0 & 0\\
\ast & \ast & 0
\end{pmatrix}$$
\end{enumerate}
\end{thm}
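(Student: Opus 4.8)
The plan is to derive all three equivalences from Theorem \ref{ThmTransGen}, combined with two classical ingredients: the description of the canonical image of a genus-$3$ hyperelliptic curve, and the identification of the Cartier operator on $\HH^0(C,\Omega_C)$ with the Cartier--Manin matrix. First I would record that, since $a(\Jac(C))=1$, there is (as recalled at the start of this subsection) a unique component $\mathcal W$ of $\hat{\mathcal S}_{3,[C]}$ through $[C]$, it is smooth by Corollary \ref{CorIrredComplSm}, and the corresponding PFTQ $\pi\colon E^3\to\Jac(C)$ satisfies $\pi\neq F$; hence Definition \ref{DefFil} applies and produces a flag $V_2\subset V_1\subset V_0=\HH^0(C,\Omega_C)$ with $\dim V_i=3-i$, the statements ``$\mathcal S_3$ meets $\mathfrak H_3$ non-transversally at $[C]$'' and ``$\mathcal W$ meets $\mathfrak H_3$ non-transversally at $[C]$'' coincide, and by Lemma \ref{LemCarta1} the flag equals $\ker(\mathcal C)\subset\ker(\mathcal C^2)\subset\HH^0(C,\Omega_C)$.

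\emph{Step 1: (i) $\Leftrightarrow$ (ii).} By Theorem \ref{ThmTransGen}, (i) is equivalent to the line $l$ attached to $V_2\subset V_1$ touching the conic $\im(\varphi_{\text{can}})$ at the attached point $P$, so the task is to translate the right-hand condition. Since $C$ is hyperelliptic of genus $3$, the canonical map factors as $C\xrightarrow{\bar x}\mathbb{P}^1\hookrightarrow\mathbb{P}^2$ with the second map the degree-$2$ Veronese; thus $\im(\varphi_{\text{can}})$ is a smooth conic, every point of it equals $\varphi_{\text{can}}(Q)$ for some $Q\in C$, and $\varphi_{\text{can}}(Q)=\varphi_{\text{can}}(\tau Q)$. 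Using $\Omega_C\cong(g^{1}_{2})^{\otimes 2}$ and Riemann--Roch I would check, for every $Q\in C$ (Weierstrass or not), that $\HH^0(\Omega_C(-Q))=\HH^0(\Omega_C(-Q-\tau Q))$ is the $2$-dimensional ``hyperplane through'' $\varphi_{\text{can}}(Q)$ and that $\HH^0(\Omega_C(-2Q-2\tau Q))$ is the $1$-dimensional subspace cutting out the tangent line of the conic at $\varphi_{\text{can}}(Q)$. Then ``$l$ touches $\im(\varphi_{\text{can}})$ at $P$'' — which forces $P$ onto the conic and $l$ to be its tangent line there, so that the whole flag is determined — becomes exactly the existence of $Q\in C$ with $V_1=\HH^0(\Omega_C(-Q-\tau Q))$ and $V_2=\HH^0(\Omega_C(-2Q-2\tau Q))$; after renaming $Q$ as $P$ (the ambiguity $Q\leftrightarrow\tau Q$ being harmless since both give the same flag), this is precisely (ii).

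\emph{Step 2: (ii) $\Leftrightarrow$ (iii).} I would fix a hyperelliptic equation $y^2=f(x)$ with $x(P)=0$ and work in the basis $\omega_i=x^{i-1}\,dx/y$, $i=1,2,3$, of $\HH^0(C,\Omega_C)$. A direct order-of-vanishing computation at $P$ and $\tau P$, uniform in the Weierstrass and non-Weierstrass cases, gives $\HH^0(\Omega_C(-P-\tau P))=\langle\omega_2,\omega_3\rangle$ and $\HH^0(\Omega_C(-2P-2\tau P))=\langle\omega_3\rangle$; since these spaces are intrinsic, the ``one (equivalently: for all)'' clause is automatic. Expanding $f^{(p-1)/2}=\sum_k c_k x^k$ and applying the Cartier operator yields $\mathcal C(\omega_i)=\sum_j c_{pj-i}^{1/p}\,\omega_j$, so the matrix of $\mathcal C$ in this basis is the entrywise $p$-th root of the Cartier--Manin matrix $M=(c_{ip-j})$ and in particular has the same zero pattern. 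By Lemma \ref{LemCarta1}, condition (ii) for this $P$ says $\ker(\mathcal C)=\langle\omega_3\rangle$ and $\ker(\mathcal C^2)=\langle\omega_2,\omega_3\rangle$. As $C$ is supersingular, $\mathcal C$ (hence $M$) is nilpotent, and $a(\Jac(C))=1$ forces $\dim\ker(\mathcal C)=1$, so the flag $\ker(\mathcal C)\subset\ker(\mathcal C^2)\subset\HH^0(C,\Omega_C)$ has dimensions $1,2,3$. A short linear-algebra argument then shows that ``$\ker(\mathcal C)=\langle\omega_3\rangle$ and $\ker(\mathcal C^2)=\langle\omega_2,\omega_3\rangle$'' holds iff the matrix of $\mathcal C$ — equivalently $M$ — is strictly lower triangular, i.e.\ has the shape displayed in (iii); its $(2,1)$ and $(3,2)$ entries are then automatically nonzero, since $\rk M=2$. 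This gives (ii) $\Leftrightarrow$ (iii) and completes the argument.

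The main obstacle is Step 1: one must reconcile the projective flag $(P,l)$ that Definition \ref{DefFil} produces abstractly, via the PFTQ and the pairings, with the concrete osculating flag of the Veronese conic — which requires pinning down the convention for $\mathbb{P}(\HH^0(C,\Omega_C))$ and verifying the Riemann--Roch statements above (notably that ``$l$ touches the conic at $P$'' genuinely forces $P$ to lie on the conic). The remaining pieces — the order-of-vanishing computation in the chosen hyperelliptic chart, the Cartier/Cartier--Manin dictionary, and the $3\times 3$ nilpotent linear algebra — I expect to be routine.
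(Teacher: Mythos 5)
Your proposal is correct and follows essentially the same route as the paper: deduce i)$\Leftrightarrow$ii) from Theorem \ref{ThmTransGen} via the identification of the flag $(P,l)$ with the osculating flag $\HH^0(\Omega_C(-2P-2\tau P))\subset\HH^0(\Omega_C(-P-\tau P))$ of the canonical conic, and prove ii)$\Leftrightarrow$iii) by the explicit basis $x^{i-1}\,dx/y$, Lemma \ref{LemCarta1}, and the nilpotence of $\mathcal{C}$. Your Step 1 merely spells out in more detail what the paper dismisses as a ``reformulation,'' and your Step 2 matches the paper's computation.
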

\begin{proof}
In i) we use that $a=1$ implies that there is unique component of $\mathcal{S}_3$ passing through $[C]$ \cite[p. 38]{Li-Oort} which furthermore is smooth at $[C]$ (Corollary \ref{CorUnram}). The implication ``i)$\Leftrightarrow$ii)'' is a reformulation of Theorem \ref{ThmTransGen}. 
\par Assume now that ii) is satisfied. We will show that iii) holds true with the same point $P\in C$. Indeed let
$$y^2=f(x)$$
be an arbitrary hyperelliptic equation for $C$ satisfying $x(P)=0$. Then the filtration 
$$\HH^0\left(C, \Omega_C(-2 P-2 \tau(P) \right)\subset \HH^0\left(C, \Omega_C(- P-\tau(P) \right)\subset \HH^0\left(C, \Omega_C \right)$$
is explicitly given by
$$\lspan\! \left(x^2 \frac{dx}{y}\right) \subset \lspan\! \left(x \frac{dx}{y}, x^2 \frac{dx}{y}\right)\subset \lspan\! \left(\frac{dx}{y}, x \frac{dx}{y}, x^2 \frac{dx}{y}\right)\,.$$
By assumption and Lemma \ref{LemCarta1} one then has
$$\lspan\! \left(x^2 \frac{dx}{y}\right)=\ker(\mathcal{C}),\, \lspan\! \left(x \frac{dx}{y}, x^2 \frac{dx}{y}\right)=\ker\! \left(\mathcal{C}^2\right)\,.$$
Together with the nilpotence of $\mathcal{C}$ this implies that the matrix of $\mathcal{C}$ with respect to the basis $\frac{dx}{y}, x \frac{dx}{y}, x^2 \frac{dx}{y}$, i.e., the Cartier-Manin matrix, has the shape
$$\begin{pmatrix}
0 & 0 & 0\\
\ast & 0 & 0\\
\ast & \ast & 0
\end{pmatrix}$$
proving iii).
\par The implication iii)$\Rightarrow$ ii) is shown by reversing the previous argument. This concludes the proof of the theorem.
\end{proof}
The utility of part iii) of the Theorem is the fact that it is easy to check whether the criterion is satisfied for a given hyperelliptic equation $y^2=f(x)$.
\begin{definition}
\begin{itemize}
\item Let $C$ be a hyperelliptic curve of genus $3$ with hyperelliptic involution $\tau$ and $P\in C(k)$ be a point. We define the \emph{filtration associated to $P$} to be the filtration in Theorem \ref{ThmTransa1} ii):
$$\HH^0\left(C, \Omega_C(-2 P-2 \tau(P) \right)\subset \HH^0\left(C, \Omega_C(- P-\tau(P) \right)\subset \HH^0\left(C, \Omega_C \right)\,.$$
\item When $a(\Jac(C))=1$ and one (and hence both) of conditions ii) or iii) of Theorem \ref{ThmTransa1} are satisfied, we call $P$ a \emph{touchpoint for the Cartier operator}.
\end{itemize}
\end{definition}
A touchpoint for the Cartier operator (if it exists) is unique up to replacing $P$ by $\tau(P)$.
\section{Examples}
\subsection{Na\"\i ve method ($a=1$)}
Using the na\"\i ve method described in the introduction one finds the following examples of supersingular hyperelliptic curves with $a$-number $1$ satisfying the criterion of Theorem \ref{ThmTransa1}.  For all these curves the Cartier-Manin matrix has the shape
$$\begin{pmatrix}
0 & 0 & 0\\
\ast & 0 & 0\\
\ast & \ast & 0
\end{pmatrix}\,.$$
The list is not proven to be complete; for fixed $p$ there could be more examples not isomorphic to the ones in the table.\\
\bgroup
\def\arraystretch{1.5}
\begin{tabular}{c|c}
$p$ & $C$\\
\hline
\hline
$11$ & $y^2=x^7 + x^5 + 7x^3 + x$\\
\hline
$19$ & $y^2 = x^7 + 7x^5 + 14x^3 + 7x$\\
     & $y^2=x^7 + 4\sqrt{3} x^5 + (12 \sqrt{3} + 1)x^3 + x$\\
     & $y^2 = x^7 + x^6 + 9x^5 + 5x^4 + 6x^3 + x^2 + 16x + 1$\\
\hline
$23$ & $y^2 = x^7 + 7x^5 + 14x^3 + 7x$\\
\hline
$31$ & $y^2 = x^7 + (\sqrt{3} + 6) x^5 + (8\sqrt{3} + 23)x^3 + 17x$\\
\hline
$43$ & $y^2= x^7 + (3b^2 + 25b + 40)x^5 + (28b^2 + 10b + 7)x^3 + x,\, b=\sqrt[3]{3}$\\
\hline
$47$ & $y^2 = x^7 + 7x^5 + 14x^3 + 7x$\\
\hline
$59$ & $y^2= x^7 + (32b^2 + b + 16)x^5 + (28b^2 + 43b + 1)x^3 + 7x,$\\
    & where $b\in \mathbb{F}_{59^3}$ is a solution of $b^3 + 5b  -2=0$\\
\hline
$67$ & $y^2 = x^7 + 18x^5 + 3x^3 + x$\\
     & $y^2 = x^7 + (33 b^3 + 43b^2 + 64b + 6)x^5 + (54b^3 + 8b^2 + 26b + 7)x^3 + x,$\\
     & where $b\in \mathbb{F}_{67^4}$ is a solution of $b^4 + 8b^2 + 54b + 2=0$ \\     
\hline
$71$ & $y^2= x^7 + 7x^5 + 17x^3 + 5x$\\
     & $y^2= x^7 + 2x^5 - 6x^3 + 21x$\\
\hline
$79$ & $y^2 = x^7 + 7x^5 + 14x^3 + 7x$\\
     & $y^2 = x^7 + (9\sqrt{3} + 5)x^5 + (8\sqrt{3} + 15)x^3 + 2x$
\end{tabular}
\egroup
\vspace{1cm}
\par The curve $y^2=x^7 + 7x^5 + 14x^3 + 7x$ appears several times in this list. As we shall see in the next subsection this can be explained with CM-theory.

\begin{rem}
The alert reader will notice that all the primes in this list satisfy $p\equiv 3 \mod 4$. Furthermore, all curves, except $y^2 = x^7 + x^6 + 9x^5 + 5x^4 + 6x^3 + x^2 + 16x + 1$ have an automorphism of order $4$ given by $x\mapsto -x, \, y\mapsto \sqrt{-1} y$. This has the following explanation: Indeed, consider the Picard surface $S\subseteq \mathfrak{A}_3$ which parametrizes principally polarized abelian threefolds $A$ together with an embedding $\mathbb{Z}[i] \hookrightarrow \End(A)$ such that the action of $i$ on $T_0 A$ has eigenvalues $(\sqrt{-1},\sqrt{-1},-\sqrt{-1})$. It follows from \cite[Theorem 4.5]{Weng} that $S$ is generically contained in the hyperelliptic locus. The corresponding hyperelliptic curves will have an equation of the form
\begin{equation}\label{eq:aut}\tag{$\ast$}
y^2 = a_7 x^7+a_5 x^5+a_3 x^3+a_1 x
\end{equation}
Now, if $p\equiv 3 \mod 4$, then $p$ is inert in $\mathbb{Z}[i]$ and hence by \cite[Theorem 2.1]{GorenShalit}, the supersingular locus $S_\text{ss}$ of $S$ is equidimensional of dimension $1$. Furthermore, the irreducible components of $S_\text{ss}$ are smooth and intersect in points with $a=3$. Let $C$ be one of the curves in the table above with an equation of the form $(\ast)$. Since the irreducible componets of $S_\text{ss}$ are smooth, there must be an additional irreducible component $Z$ of $\mathcal{S}_3 \cap \mathfrak{H}_3$ passing through the moduli point $[\Jac(C)]$. Furthermore $Z$ cannot be contained in $S_\text{ss}$ because $a(\Jac(C))=1$. We conclude that the non-transversality of $\mathcal{S}_3$ and $\mathfrak{H}_3$ is explained by the existence of two types of irreducible components of the intersection: Those coming from $S_\text{ss}$ and those that do not. The two types of irreducible components can intersect anywhere in $\mathcal{S}_3$, even at points with $a$-number $1$.
\end{rem}
\subsection{CM-examples ($a=1$)}
In this section we will construct an example of a curve $C$ over $\mathbb{Q}$ with the property that the reduction at any prime $p$ with $p\equiv \pm 2 \mod 7, p\equiv 3 \mod 4$ satisfies the criterion of Theorem \ref{ThmTransa1}.
\par To set up our notation consider a hyperelliptic genus $3$ curve $C$ over $\mathbb{C}$. Suppose that $\Jac(C)$ has CM by a field $K$. Let us look at the differential forms $V=\HH^0(C, \Omega_C)$. By CM-theory there is a decomposition
$$V=\bigoplus_{\iota \in \Phi^c} V_{\iota} $$
into one dimensional $\mathbb{C}$-vector spaces. Here $\Phi\subset \Hom(K, \mathbb{C})$ is the CM-type and $\Phi^c$ its complement. An element $a\in K$ acts on $V_{\iota}$ via multiplication by $\iota(a)$.
\par Furthermore we denote by $L$ the Galois closure of $K$ over $\mathbb{Q}$ and by $G=\Gal(L/\mathbb{Q})$ the Galois group.
\begin{definition}\label{DefCMadapt}
Let $C$ be as above and let $P\in C(\mathbb{C})$ be a point. We say that \emph{$P$ is a touchpoint for the CM-action} if the following condition holds true:
\begin{itemize}
\item[i)] Denote by $V=V_0 \supset V_1 \supset V_2$ the filtration associated to $P$. Then we demand that there is an ordering $\iota_0, \iota_1, \iota_2$ of $\Phi^c$ such that
$$V_{\iota_i} \subseteq V_i \text{ for all } i \in \{0,1,2\} \,.$$
\item[ii)] The choice of the ordering in i) gives an ordering $\iota_0,\iota_1,\iota_2,\overline{\iota_0},\overline{\iota_1},\overline{\iota_2}$ of all the embeddings of $K$ into $\mathbb{C}$. This gives a map
$$\text{per}:G\hookrightarrow S_6$$
induced from the action of $G$ on the complex embeddings of $K$. The second condition is that
$$(123456)\in \im(\text{per})\,.$$
\end{itemize}
\end{definition}
\begin{rem} The motivation for the terminology is that we can draw the following picture: From the decomposition $V=\bigoplus_{\iota \in \Phi^c} V_{\iota} $ we get three points $P_0, P_1, P_2$ in $\mathbb{P}^2$, namely $P_i$ is the point corresponding to the surjection $\pr_i: \bigoplus_{\iota \in \Phi^c} V_{\iota} \rightarrow V_{\iota_i}$ for $i\in \{0,1,2\}$. Then condition i) is equivalent to $P_0=\varphi_{\text{can}}(P)$ and the line $P_0P_1$ should be tangent to $\im(\varphi_{\text{can}})$. Condition ii) does not have a geometric interpretation.
\begin{figure}[H]\label{FigTouchCM}
\begin{tikzpicture}
\draw (0,0) ellipse (2cm and 1cm);
\node at (2.7,-0.5) {$\im(\varphi_{\text{can}})$};
\fill (0,1) circle (0.05cm and 0.05cm);
\node at (0,1) [label={[shift={(0.1,-0.65)}] $P_0$}] {};
\draw (-3, 1) -- (3, 1);
\fill (2,1) circle (0.05cm and 0.05cm);
\node at (2, 1)[label={[shift={(0.1,-0.65)}] $P_1$}] {};
\fill (-2.5,-0.5) circle (0.05cm and 0.05cm);
\node at (-2.5, -0.5)[label={[shift={(0.1,-0.65)}] $P_2$}] {};
\end{tikzpicture}
\caption{CM touchpoint}
\end{figure}
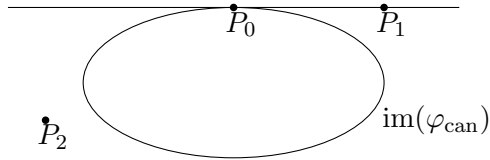

\par Notice that the existence of a touchpoint $P$ for the CM-action is very restrictive. Most of the CM-curves do not have any. Contrary to touchpoints for the Cartier operator there could exist more than one touchpoint for the CM-action (modulo the hyperelliptic involution) because it might happen that $P_2$ is on $\im(\varphi_{\text{can}})$ with tangent line $P_1 P_2$.
\par A touchpoint can be a Weierstrass point. We will now discuss an example where this happens.
\end{rem}
\begin{exmpl}\label{ExmplTouch}
Consider the hyperelliptic curve $C_{14}$ over $\mathbb{C}$ given by the affine equation
$$y^2=x\left(x^{14}-1\right)\,.$$
It is known \cite{BrandtStich} that the automorphism group of $C_{14}$ is isomorphic to the group $U_{14}$, where
$$U_{n}=\langle a,b \mid a^2=b^{2n}=abab^{n+1} =1\rangle \,.$$
The action is given by
$$a:\begin{array}{c}
        x\mapsto \frac{-1}{x}\\
        y \mapsto \frac{y}{x^8}
\end{array},\, b:\begin{array}{c}
        x\mapsto -\zeta_7 x\\
        y \mapsto i \zeta_7^4\, y
\end{array}$$
where $\zeta_7=e^{\frac{2 \pi i}{7}}\in \mathbb{C}$.
\par We now define $C$ to be the quotient $C_{14}/a$. Denote by
$$\pi:C_{14}\longrightarrow C$$
the quotient map. We define a point $P$ on $C$ given by the image of\\
$(0,0)\in C_{14}$ under $\pi$.
\end{exmpl}
\begin{rem}
The curve $C$ was first found by Annegret Weng. It is the first entry of the list \cite[p. 18]{Weng} and has the following Weierstrass equation:
$$y^2=x^7+7x^5+14x^3+7x$$
Together with Bert van Geemen and Kenji Koike \cite[Example 2.1]{GeeKoiWe} she used the construction described in the example to prove that $\Jac(C)$ has CM by $\mathbb{Q}(\zeta_7+\zeta_7^{-1},\, i)$.
\par The construction for the curve $C$ can be seen as a special case of Tautz-Top-Verberkmoes \cite{TTV} in the case where (in their notation) $p=7,\, t=0$.
\end{rem}
To facilitate reading the proof of the following proposition, we remark that we do not choose coordinates on $C$. Thus the coordinates $x,y$ always refer to $C_{14}$.
\begin{prop}\label{PropTouch} In the notation from above:
\begin{enumerate}
\item[i)] The curve $C$ is hyperelliptic of genus $3$.
\item[ii)] $\Jac(C)$ has CM by $K=\mathbb{Q}(\zeta_7+\zeta_7^{-1},\, i)$. The CM-order
$$R=\End(\Jac(C)) \cap K$$
is maximal away from $2$. (In fact $R=\mathcal{O}_K$, but we will not use this.)
\item[iii)] The point $P$ is a touchpoint for the CM action.
\end{enumerate}
\end{prop}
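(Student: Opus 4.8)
I would first describe $C$ explicitly. The functions $u = x - x^{-1}$ and $v = y x^{-4}$ of $\mathbb{C}(C_{14}) = \mathbb{C}(x,y)$ are $a$-invariant --- immediate from $a^{*}x = -x^{-1}$, $a^{*}y = y x^{-8}$ --- so $\mathbb{C}(u,v) \subseteq \mathbb{C}(x,y)^{a}$, and since $[\mathbb{C}(x,y):\mathbb{C}(u,v)] = 2 = \deg\pi$ this inclusion is an equality. Substituting $y^{2} = x(x^{14}-1)$ gives $v^{2} = x^{7} - x^{-7}$, which the standard identity expressing $x^{7} - x^{-7}$ as a polynomial in $x - x^{-1}$ rewrites as $v^{2} = u^{7} + 7u^{5} + 14u^{3} + 7u$. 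Thus $C$ is the hyperelliptic curve with this equation; it is smooth of genus $3$ once one checks that $u(u^{6} + 7u^{4} + 14u^{2} + 7)$ is separable, which holds because the substitution $w = u^{2}$ turns the sextic factor into $w^{3} + 7w^{2} + 14w + 7$, whose roots are the distinct nonzero numbers $2 + \zeta_{7}^{j} + \zeta_{7}^{-j}$ ($j = 1,2,3$). (Alternatively one may use Riemann--Hurwitz for $\pi$ --- which has precisely the four branch points lying over $x = \pm i$ --- together with the fact that the hyperelliptic involution of $C_{14}$ descends to one of $C$ with quotient $C_{14}/\langle a, \tau\rangle = \mathbb{P}^{1}$.)

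\textbf{Part ii).} The plan is to exhibit two commuting endomorphisms of $\Jac(C)$ generating a sufficiently large order. A relation computation in $U_{14}$ shows $b^{7}$ commutes with $a$, and on coordinates $b^{7}$ is $x \mapsto -x$, $y \mapsto -iy$; hence $b^{7}$ descends to an automorphism of $C$ of order $4$ whose square is the hyperelliptic involution, i.e. $[-1]$ on $\Jac(C)$, producing an embedding $\mathbb{Z}[i] \hookrightarrow \End(\Jac C)$. For the totally real part, take $c \colon x \mapsto \zeta_{7} x$, $y \mapsto \zeta_{14} y$ on $C_{14}$; although $c$ itself does not descend, one checks $a c a^{-1} = c^{-1}$, so the correspondence $c_{*} + (c^{-1})_{*}$ commutes with $a_{*}$ and therefore restricts to an endomorphism $\Psi$ of the abelian subvariety $\pi^{*}\Jac(C) \cong \Jac(C)$. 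Computing the $c$-action on the basis $x^{j}\,dx/y$ ($0 \le j \le 6$) of $\HH^{0}(C_{14}, \Omega)$ shows $\Psi$ acts on $\HH^{0}(C,\Omega)$ with the three distinct eigenvalues $\zeta_{14}^{2j+1} + \zeta_{14}^{-(2j+1)}$ ($j = 0,1,2$); hence $\mathbb{Z}[\Psi] \cong \mathbb{Z}[\zeta_{14} + \zeta_{14}^{-1}]$, which is the maximal order $\mathcal{O}_{F}$ of $F := \mathbb{Q}(\zeta_{7})^{+} = \mathbb{Q}(\zeta_{7} + \zeta_{7}^{-1})$ (its discriminant is $49 = \disc(F)$). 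Since $\Psi$ commutes with $i$ and $i \notin F$, together they generate $\mathcal{O}_{F}[i] = \mathbb{Z}[\zeta_{7} + \zeta_{7}^{-1}, i]$, an order of rank $6 = 2\dim\Jac(C)$ in $K$; in particular $\Jac(C)$ has CM by $K$ (recovering \cite{GeeKoiWe}) and $R \supseteq \mathbb{Z}[\zeta_{7} + \zeta_{7}^{-1}, i]$. Finally, $K = F(i)$ with $K/F$ ramified only over $2$, so $\mathbb{Z}[\zeta_{7}+\zeta_{7}^{-1},i]$ has $2$-power index in $\mathcal{O}_{K}$; being squeezed between it and $\mathcal{O}_{K}$, the order $R$ is maximal away from $2$.

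\textbf{Part iii).} Since $(0,0) \in C_{14}$ is not a fixed point of $a$ (it is interchanged with the point at infinity), $\pi$ is a local isomorphism there, while $u$ has a pole; hence $P = \pi(0,0)$ is the Weierstrass point of $C$ over $u = \infty$, and using $y$ as a local parameter at $P$ one computes $\operatorname{ord}_{P}(u^{j}\,du/v) = 4 - 2j$, so the filtration associated to $P$ is
$$V_{2} = \langle du/v \rangle \ \subset\ V_{1} = \langle du/v,\ u\,du/v \rangle \ \subset\ V_{0} = \HH^{0}(C,\Omega).$$
Pulling these lines back to $\HH^{0}(C_{14},\Omega)$ and reusing the eigenvalue computation of Part ii) shows that $\langle du/v\rangle$ and $\langle u\,du/v\rangle$ are themselves CM-eigenlines (the third being $\langle du/v + u^{2}\,du/v\rangle$), so Definition~\ref{DefCMadapt}~i) holds with $V_{\iota_{2}} = V_{2}$ and $V_{\iota_{1}} = \langle u\,du/v\rangle \subseteq V_{1}$. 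For part ii) of that definition, $K$ is Galois over $\mathbb{Q}$ with $G = \Gal(K/\mathbb{Q}) \cong \Gal(F/\mathbb{Q}) \times \Gal(\mathbb{Q}(i)/\mathbb{Q}) \cong \mathbb{Z}/3 \times \mathbb{Z}/2$, so $\text{per}$ is the regular representation (left translation) and its image automatically contains $6$-cycles; what must be checked is that the particular cycle $(1\,2\,3\,4\,5\,6)$ occurs for this ordering. I would read off from the $\Psi$-eigenvalues (giving $\iota|_{F}$) and the $b^{7}$-eigenvalues (giving $\iota|_{\mathbb{Q}(i)}$, the sign of $i$ being immaterial) that, with $\iota_{0}$ taken as the base embedding so that $\Hom(K,\mathbb{C}) = G$ and writing elements additively in $\mathbb{Z}/3 \times \mathbb{Z}/2$ (with complex conjugation $= (0,1)$),
$$\iota_{0} = (0,0),\quad \iota_{1} = (2,1),\quad \iota_{2} = (1,0),\quad \bar\iota_{0} = (0,1),\quad \bar\iota_{1} = (2,0),\quad \bar\iota_{2} = (1,1);$$
left translation by $(2,1) = \iota_{1}$ then sends the $k$-th entry of this list to the $(k+1)$-st cyclically, i.e. $\text{per}((2,1)) = (1\,2\,3\,4\,5\,6)$, which is part ii).

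The step I expect to be the real obstacle is this last one: one must pin down the CM type and fix conventions for complex conjugation, for the $G$-action on $\Hom(K,\mathbb{C})$, and for the sign of $i$ coming from $b^{7}$, precisely enough to obtain the specific $6$-cycle $(1\,2\,3\,4\,5\,6)$ rather than merely some $6$-cycle; the remaining work is routine computation with the model $C_{14}$.
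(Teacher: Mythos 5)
Your proposal is correct and in substance follows the same route as the paper: descend (combinations of) automorphisms of $C_{14}$ to endomorphisms of $\Jac(C)$, compute their action on the $a$-invariant differentials $(1+x^6)\tfrac{dx}{y},\,(x-x^5)\tfrac{dx}{y},\,(x^2+x^4)\tfrac{dx}{y}$, and match the resulting eigenlines against the filtration at $P=\pi((0,0))$. The differences are in execution, and they are reasonable ones. For i) you exhibit the quotient model $v^2=u^7+7u^5+14u^3+7u$ explicitly, whereas the paper argues via Riemann--Hurwitz plus the fact that a curve dominated by a hyperelliptic curve is (hyper)elliptic or rational, and only quotes the Weierstrass model in a remark. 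For ii) you use the honest endomorphism $c_*+(c^{-1})_*$ with $c\colon x\mapsto\zeta_7x,\ y\mapsto\zeta_{14}y$ (which is $b^{22}$, so it does commute with $b^7$), in place of the paper's element $\varphi\bigl(\tfrac{b^8+b^{-8}+ab^8+ab^{-8}}{2}\bigr)$ of $\End(\Jac(C_{14}))\bigl[\tfrac12\bigr]$; your variant avoids inverting $2$, and your conductor argument for $2$-power index of $\mathcal{O}_F[i]$ in $\mathcal{O}_K$ replaces the paper's appeal to $\mathcal{O}_K=\mathbb{Z}[\zeta_7+\zeta_7^{-1},i]$ --- both suffice for maximality away from $2$. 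For iii) you actually carry out the verification of Definition \ref{DefCMadapt} ii), which the paper leaves implicit after listing the eigenvalues; your table of group elements and the observation that translation by the class of $\iota_1$ is the cyclic shift are consistent with the eigenvalue data (and since $\im(\mathrm{per})$ is a subgroup, the left/right-action and sign-of-$i$ ambiguities are indeed harmless), so this is a genuine completion of the terse step rather than a gap. Two cosmetic slips: the roots of $w^3+7w^2+14w+7$ are $\zeta_7^j+\zeta_7^{-j}-2$, not $2+\zeta_7^j+\zeta_7^{-j}$ (distinctness and nonvanishing still hold); and $y$ is not a function on $C$ --- what you mean is that the orders $\mathrm{ord}_P(u^j\,du/v)=4-2j$ can be computed upstairs at $(0,0)$, where $\pi$ is unramified and $y$ is a local parameter. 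Neither affects the argument.
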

\begin{proof}
For i) notice that the automorphism $a$ of $C_{14}$ has $4$ fixed points, namely the points with $x$-coordinate $\pm i$. Then the Riemann-Hurwitz formula shows that $g(C)=3$. Moreover, $C$ is hyperelliptic because every curve with a finite map from a hyperelliptic curve is either (hyper)-elliptic or has genus $0$ (look at the push-forward of the divisor defining the hyperelliptic pencil).
\par We will now show ii) and iii). Indeed, consider the action of the group $U_{14}$ on $C_{14}$. It induces a map
$$\varphi:\mathbb{Z}\!\left[\frac{1}{2}\right][U_{14}]\longrightarrow \End(\Jac(C_{14}))\!\left[\frac{1}{2}\right]\,.$$
Consider the two elements 
$$\mathsf{i}=\varphi\left(b^7\right),\, \alpha=\varphi\! \left(\frac{b^8+b^{-8}+a b^8+ a b^{-8}}{2}\right)\in \End(\Jac(C_{14}))\!\left[\frac{1}{2}\right]\,.$$
We claim that $\mathsf{i}, \alpha$ give endomorphisms in $\End(\Jac(C))\!\left[\frac{1}{2}\right]$ by restricting them to the image of the map
$$\pi^*: \Jac(C)\hookrightarrow \Jac(C_{14})\,. $$
Indeed, since $\im(\pi^*)=\ker(1-a)^o$ it suffices to point out that
$$a b^7=b^7 a$$
$$a \frac{b^8+b^{-8}+a b^8+ a b^{-8}}{2}= \frac{b^8+b^{-8}+a b^8+ a b^{-8}}{2} a\,.$$
This proves the claim. 
\par It is now elementary to check that the map
$$\mathbb{Z}\!\left[\frac{1}{2}, \zeta_7+\zeta_7^{-1}, \, i \right] \longrightarrow \End(\Jac(C))\!\left[\frac{1}{2}\right],\, \zeta_7+\zeta_7^{-1} \mapsto \alpha,\, i \mapsto \mathsf{i}$$
is a well-defined ring homomorphism. Since $\mathcal{O}_K=\mathbb{Z}[\zeta_7+\zeta_7^{-1}, \, i]$, ii) follows.
\par To show iii) we need to compute the action of $K$ on the differential forms on $C$. To this end, consider the action of the group $U_{14}$ on $\HH^0(C_{14}, \Omega_{C_{14}}) $. Indeed, the vector space $\HH^0(C_{14}, \Omega_{C_{14}})$ has 

 basis $x^j \frac{dx}{y}, j=0,\ldots, 6$. With respect to this basis the automorphisms $a$ and $b$ act via the matrices
$$\begin{pmatrix}
&&&&&&1\\
&&&&&-1\\
&&&& 1\\
&&&-1\\
&& 1\\
& -1\\
1
\end{pmatrix} \text{resp}. \, \begin{pmatrix}
i \zeta_7^4\\
 & (i \zeta_7^4)^3\\
 && \ddots\\
 &&& (i \zeta_7^4)^{13}
\end{pmatrix}\,.$$
This implies that the image of
$$\pi^*: \HH^0(C, \Omega_C) \longrightarrow \HH^0(C_{14}, \Omega_{C_{14}}) $$
has basis
$$\omega_0=\left(1+x^6\right) \frac{dx}{y},\, \omega_1=\left(x-x^5\right) \frac{dx}{y}, \omega_2=\left(x^2+x^4\right) \frac{dx}{y}\,$$
(as it is given by the differential forms fixed by $a$). Moreover, we compute
$$\mathsf{i}^*\left( \omega_0 \right) = -i \omega_0,\, \alpha^*\left( \omega_0 \right) = (\zeta_7^3+\zeta_7^{-3}) \omega_0\,,$$
$$\mathsf{i}^*\left( \omega_1 \right) = i \omega_1, \, \alpha^*\left( \omega_1 \right) = (\zeta_7^2+\zeta_7^{-2}) \omega_1\,, $$
$$\mathsf{i}^* \left(\omega_2 \right) = -i \omega_2, \, \alpha^*\left( \omega_2 \right) = (\zeta_7+\zeta_7^{-1})  \omega_2\,. $$
This gives us the desired description of the action of $K$ on the differential forms on $C$. We get that the point $P=\pi\left(\left(0,0\right)\right)$ is a touchpoint for the CM-action.
\end{proof}
\begin{rem}
\par The idea behind the choice of the field $\mathbb{Q}(\zeta_7+\zeta_7^{-1},i)$ is as follows: The well-known CM curve $y^2 = x^7+1$ has CM by the cyclotomic field $\mathbb{Q}(\zeta_7)$ and satisfies condition i) in Definition \ref{DefCMadapt} either with $P=(0,1)$ or with $P=\infty$. However, both of the points fail condition ii).
\par For that reason we will look for a hyperelliptic curve with CM by another field. The field $\mathbb{Q}(\zeta_7+\zeta_7^{-1}, i)$ is the next option (if you order sextic CM fields by discriminant) \cite[Table 1]{Kilicer}).
\end{rem}
For the next theorem we use the following notation: Let $C$ be a hyperelliptic genus $3$ curve over $\mathbb{C}$ such that $\Jac(C)$ has CM by a field $K$. We denote by $L$ the Galois closure of $K/\mathbb{Q}$ and by $G=\Gal(L/\mathbb{Q})$ the Galois group. Assume, furthermore, that there exists a point $P\in C$ which is a touchpoint for the CM-action. Let us denote by
$$\text{per}:G\hookrightarrow S_6$$
the group homomorphism introduced in Definition \ref{DefCMadapt}.ii)
\par By CM-theory $C$ is defined over a number field, say $M$. We assume that $M$ is large enough, such that $M$ contains $L$, all endomorphisms of $\Jac(C)$ are defined over $M$, and such that $P$ is defined over $M$ (notice that $P$ must be defined over $\overline{\mathbb{Q}}$).
\begin{thm}\label{ThmCMRed}
Let $\mathfrak{P}$ be a prime ideal of $M$ such that $C$ has good reduction at $\mathfrak{P}$. Assume that $p$ does not divide $\disc(R)$ where $R=K\cap \End_{\mathbb{C}}(\Jac(C))$ is the CM-order and $p$ is the residue characteristic of $\mathfrak{P}$.
\par Denote by $(\overline{C}, \overline{P})$ the reduction of the pair $(C, P)$ at $\mathfrak{P}$ and by $\mathfrak{p}$ the intersection $\mathfrak{P}\cap L$.
\par Assume that $\text{\emph{per}}(\Frob_{\mathfrak{p}})=(123456)^{-1}$.
\par Then the curve $\overline{C}$ is supersingular and the pair ($\overline{C}, \overline{P}$) satisfies the criterion of Theorem \ref{ThmTransa1}, i.e., $\overline{P}$ is a touchpoint for the Cartier operator on $\overline{C}$.
\end{thm}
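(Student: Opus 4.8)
The plan is to reduce the statement to Theorem \ref{ThmTransa1} by establishing, for the reduction, that (a) $\overline C$ is supersingular with $a(\Jac(\overline C))=1$, so that there is a unique, smooth component of $\mathcal S_3$ through $[\Jac(\overline C)]$ and the filtration of Lemma \ref{LemCarta1} applies, and (b) the filtration $\ker(\mathcal C)\subset\ker(\mathcal C^{2})\subset \HH^0(\overline C,\Omega_{\overline C})$ of Lemma \ref{LemCarta1} coincides with the filtration associated to $\overline P$. Given (a) and (b), the implication ii)$\Rightarrow$i) of Theorem \ref{ThmTransa1} yields exactly the claim.

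\textbf{Reduction of the CM structure; supersingularity and the $a$-number.} Since $p\nmid\disc(R)$, the prime $p$ is unramified in $K$ (hence in $L$) and $R$ is maximal at $p$; good reduction at $\mathfrak P$ makes $\End(\Jac(C))\to\End(\Jac(\overline C))$ injective, so $\Jac(\overline C)$ has CM by $R$ and the full ring $\mathcal O_{K,p}=R\otimes\mathbb Z_p$ acts. The hypothesis that $\text{per}(\Frob_{\mathfrak p})$ is a $6$-cycle means precisely that $p$ is inert in $K$, i.e.\ $\mathcal O_{K,p}\cong W(\mathbb F_{p^6})$. Writing $\pi$ for the Frobenius endomorphism of $\Jac(\overline C)$ and using $\pi\bar\pi=q$ together with the fact that there is a unique prime of $K$ above $p$, one gets $v(\pi)=\tfrac12 v(q)$ at that prime, so every Newton slope is $1/2$ and $\overline C$ is supersingular. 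For the $a$-number I would use the crystalline Dieudonné module $\mathbb D=\HH^1_{\mathrm{cris}}(\Jac(\overline C)/W(k))$ with its $\mathcal O_{K,p}$-action: since $\mathcal O_{K,p}\otimes_{\mathbb Z_p}W(k)\cong\prod_{j\in\mathbb Z/6}W(k)$, it decomposes $\mathbb D=\bigoplus_j\mathbb D_j$ into $W(k)$-lines, which $F$ (resp.\ $V$) permutes according to the Frobenius $\sigma$ on the index set, each step being multiplication by $1$ or by $p$, with $FV=VF=p$. The set of indices on which $F$ is "zero mod $p$" is the Hodge index set, and under reduction this is the image of $\Phi^c=\{\iota_0,\iota_1,\iota_2\}$, i.e.\ the three consecutive positions $\{1,2,3\}$ in the ordering fixed in Definition \ref{DefCMadapt}. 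Reading off the induced operator on $\HH^0(\overline C,\Omega_{\overline C})=\bigoplus_{j\in\{1,2,3\}}\mathbb D_j/p\mathbb D_j$ — which is the Cartier operator $\mathcal C$, a $\sigma^{-1}$-semilinear nilpotent map — the hypothesis $\text{per}(\Frob_{\mathfrak p})=(123456)^{-1}$ makes $\sigma^{-1}$ shift this interval onto the adjacent one, so $\mathcal C$ is a single nilpotent Jordan block; hence $a(\Jac(\overline C))=1$, and $\ker(\mathcal C)$ (resp.\ $\ker(\mathcal C^2)$) is the reduction of $V_{\iota_2}$ (resp.\ of $V_{\iota_1}\oplus V_{\iota_2}$). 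That $\mathcal C$ shifts the $\mathcal O_K$-eigenspace index by exactly $\sigma^{-1}$ also follows directly from functoriality and semilinearity of the Cartier operator (as used in Lemma \ref{LemCarta1}), the crystalline computation being needed only to see that the successive maps in the chain are isomorphisms.

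\textbf{Matching the two filtrations.} By the definition of a touchpoint for the CM-action (Definition \ref{DefCMadapt}.i)), the filtration associated to $P$ on $C$ equals $V_{\iota_2}\subset V_{\iota_1}\oplus V_{\iota_2}\subset\HH^0(C,\Omega_C)$. Since $\tau$ reduces to the hyperelliptic involution of $\overline C$, the divisor $\overline P+\tau(\overline P)$ is still a fibre of the hyperelliptic map, so the dimensions of $\HH^0(\Omega(-i(P+\tau P)))$ are constant under specialization; cohomology and base change then show that the reduction of the filtration associated to $P$ is the filtration associated to $\overline P$. Comparing with the previous paragraph, the filtration associated to $\overline P$ equals $\ker(\mathcal C)\subset\ker(\mathcal C^2)\subset\HH^0(\overline C,\Omega_{\overline C})$, which by Lemma \ref{LemCarta1} is exactly the filtration appearing in Theorem \ref{ThmTransa1}.ii). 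Theorem \ref{ThmTransa1} (valid because $a(\Jac(\overline C))=1$) then gives that $\mathcal S_3$ meets $\mathfrak H_3$ non-transversally at $[\Jac(\overline C)]$ and that $\overline P$ is a touchpoint for the Cartier operator.

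\textbf{Main obstacle.} The delicate part is the precise bookkeeping relating (i) the $\mathcal O_K$-eigenspace decomposition of $\HH^0(C,\Omega_C)$ and its reduction, (ii) the decomposition of the crystalline Dieudonné module and the action of $F,V$, (iii) the Cartier operator on $\HH^0(\overline C,\Omega_{\overline C})$ and its Frobenius-semilinearity, and (iv) the identification, via the chosen embedding $\overline{\mathbb Q}\hookrightarrow\overline{\mathbb Q}_p$, of $\sigma$ acting on the $p$-adic places of $K$ with $\text{per}(\Frob_{\mathfrak p})$. It is exactly this chain of Frobenius twists (and the arithmetic-versus-geometric-Frobenius convention) that forces the hypothesis to be stated with $(123456)^{-1}$ rather than $(123456)$, and that upgrades "$\mathcal C$ is nilpotent with a staircase shape" to "$\mathcal C$ is a single Jordan block", without which one could only conclude $a(\Jac(\overline C))\in\{1,2\}$.
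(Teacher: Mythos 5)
Your proposal is correct and follows essentially the same route as the paper: decompose the Dieudonné module of $\Jac(\overline{C})$ into eigenlines under $R\otimes_{\mathbb{Z}}W(k)$ (using $p\nmid\disc(R)$), observe that the Hodge positions are the $\Phi^c$-indices $\{1,2,3\}$ and that $\text{per}(\Frob_{\mathfrak{p}})=(123456)^{-1}$ forces $V$ to shift the eigenlines cyclically in the listed order, so that $\mathcal{C}$ is a single Jordan block (hence $a=1$ and supersingularity) with $\ker(\mathcal{C})\subset\ker(\mathcal{C}^2)$ equal to the specialization of $V_{\iota_2}\subset V_{\iota_1}\oplus V_{\iota_2}$, i.e.\ the filtration attached to $\overline{P}$, after which Theorem \ref{ThmTransa1} applies. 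The only minor divergence is that you deduce supersingularity from inertness of $p$ in $K$ and $\pi\bar{\pi}=q$ rather than reading it off the explicit module, an alternative the paper itself flags in a footnote.
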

\begin{proof}
In order to prove the theorem we need to describe the action of the Cartier operator on $\HH^0(\overline{C}, \Omega_{\overline{C}})$ in terms of CM-theory. By \cite[Corollary 5.11]{Oda} it suffices for that purpose to understand the Verschiebung operator on the Dieudonn\'e module $M=D(\Jac(\overline{C}))$. Now the description of the Dieudonn\'e module of the reduction of a CM abelian variety is known (see e.g. \cite{Deligne} cited after \cite[pp. 83-85]{Milne} or look at \cite{Anschuetz} for a stronger result in $p$-adic Hodge theory). Nevertheless, we will reproduce a proof for the special case at hand because we need the inner mechanics of the identification for showing the other claims of the theorem.
\par Indeed, choose $k$, an algebraically closed field containing the residue field of $\mathfrak{P}$. By slight abuse of notation we will denote the base change of $\overline{C}$ to $k$ by the same letter.
\par We claim that the Dieudonn\'e module $M=D(\Jac(\overline{C}))$ has a $W(k)$ basis $m_1, \ldots, m_6$ such that
$$V m_1= m_2, V m_2=m_3, Vm_3=p m_4, V m_4=p m_5, V m_5=p m_6, V m_6=m_1\,.$$
Indeed $M$ naturally carries the structure of an $R\otimes_{\mathbb{Z}} W(k)$-module. By assumption $R\otimes_\mathbb{Z} \mathbb{Z}_p$ is an \'etale $\mathbb{Z}_p$-algebra and therefore
$$R\otimes_{\mathbb{Z}} W(k) \cong \bigoplus_{i=1}^6 W(k)$$
where the copies of $W(k)$ are indexed by $\Hom(R, W(k))$. Now under our assumptions we claim that there is a bijection $\Hom(K, \mathbb{C})\stackrel{\sim}{\longrightarrow} \Hom(R, W(k))$. Indeed, we assumed that $C$ is defined over the number field $M\subset \mathbb{C}$. Furthermore $M$ was assumed to be large enough to contain $L$, a Galois closure of $K$. This gives an embedding $L \hookrightarrow \mathbb{C}$. Now every embedding $\iota: K\hookrightarrow \mathbb{C}$ must factor through $L$. We define the map $\Hom(K, \mathbb{C})\longrightarrow \Hom(R, W(k))$ to be the composition
$$\begin{tikzcd}[cramped, sep=small]
\Hom(K, \mathbb{C}) \arrow{r}& \Hom(K, L) \arrow{r}& \Hom(R, \mathcal{O}_L)\arrow{r}&  \Hom(R, \mathcal{O}_L/\mathfrak{p})\\
& \iota \arrow[mapsto]{r}& \iota_{|R}
\end{tikzcd}$$
followed by the composition $\Hom(R, \mathcal{O}_L/\mathfrak{p}) \rightarrow \Hom(R, k)\rightarrow \Hom(R, W(k)),$ where the first map is induced from the natural inclusion
$$\mathcal{O}_L/\mathfrak{p} \hookrightarrow \mathcal{O}_M/\mathfrak{P}\hookrightarrow k\,,$$
and the second map exists because of the identification $\Hom(R, W(k))\stackrel{\sim}{\rightarrow} \Hom(R\otimes_{\mathbb{Z}}\mathbb{Z}_p, W(k))$ (a consequence of $p$-adic completeness, the assumption that $R\otimes_{\mathbb{Z}}\mathbb{Z}_p$ is an \'etale $\mathbb{Z}_p$-algebra, and general properties of the Witt ring). This gives us the map $\Hom(K, \mathbb{C})\longrightarrow \Hom(R, W(k))$. One can show that this map is a bijection, e.g. by writing down a chain of maps inverting all the arrows.
\par We are now allowed to index the direct sum decomposition 
$$R\otimes_{\mathbb{Z}} W(k) \cong \bigoplus_{i=1}^6 W(k)$$
with the ordering corresponding to the ordering $\iota_0, \iota_1, \iota_2, \overline{\iota}_0, \overline{\iota}_1, \overline{\iota}_2$ of $\Hom(K, \mathbb{C})$.
\par This gives a direct sum decomposition
$$M=\bigoplus_{i=1}^6 M_i$$
as $W(k)$-modules such that $R$ acts on $M_i$ via multiplication by a scalar. More precisely, if $\psi_1, \ldots, \psi_6$ are the maps $R\rightarrow W(k)$ corresponding to $\iota_0, \iota_1, \iota_2, \overline{\iota}_0, \overline{\iota}_1, \overline{\iota}_2$, then $a\in R$ acts on $M_i$ via multiplication by $\psi_i(a)$.
\par Choose at first an arbitrary generator $m_1\in M_1$. (We will later adjust $m_1$.) Then for any $a\in R$ we have
$$a\cdot  Vm_1=V(a\cdot m_1)=V(\psi_1(a) m_1)=(\sigma^{-1}\circ \psi_1)(a) Vm_1$$
where $\sigma: W(k)\rightarrow W(k)$ is the lift of Frobenius. Now the assumption $\text{\emph{per}}(\Frob_{\mathfrak{p}})=(123456)^{-1}$ implies that $\sigma^{-1}\circ \psi_1 = \psi_2$ and thus\\
${m_2 \stackrel{\textrm{def.}}{=} V m_1 \in  M_2}$. We claim that $m_2$ generates $M_2$. For this purpose we look at the isomorphism
$$M/VM \cong \HH^1(\overline{C}, \mathcal{O}_{\overline{C}})\,.$$
Thus $M/VM$ must be generated by the image of $M_4, M_5, M_6$, because the maps $\psi_4, \psi_5, \psi_6$ correspond to the embeddings contained in the CM-type $\Phi$, i.e., $\overline{\iota}_0, \overline{\iota}_1, \overline{\iota}_2$. Therefore $M_2$ is contained in $VM$ and this proves the claim.
\par Similarly we can define $m_3=V m_2$ and $m_3$ must generate $M_3$. Now consider $V m_3$. We have $V m_3\in M_4$. But $V m_3$ does not generate $M_4$ because $M_4/pM_4$ injects into $\HH^1(\overline{C}, \mathcal{O}_{\overline{C}})$ by the argument above. Therefore, $V m_3 \in p M_4$ and we define $m_4=\frac{1}{p} V m_3$. Since this implies that $F m_4=m_3$, we see that $m_4$ must be a generator of $M_4$.
\par Along the same lines of reasoning we obtain generators $m_5$, (resp. $ m_6$) of $M_5$ (resp. $M_6$) satisfying the relations $V m_4=p m_5, V m_5=p m_6$.
\par Then, $V m_6$ will also be a generator of $M_1$, say $V m_6=u m_1$ for some unit $u\in W(k)^\times$. Now, since $k$ is algebraically closed, one can show that there exists a unit $\tilde{u}\in W(k)^\times$ such that $\sigma^{-6}(\tilde{u}) u = \tilde{u}$. After replacing $m_1$ by $\tilde{u} m_1$ and redefining $m_2, \ldots, m_6$ by the relations above, we see that $V m_6=m_1$ holds true. This gives the desired description of the Dieudonn\'e module $M$.
\par Then it follows from this description of $M$ that $\overline{C}$ is supersingular \footnote{Alternatively one can use the Shimura-Taniyama formula to compute the Newton polygon.} and $a(\Jac(\overline{C}))=1$.
\par We claim that $\overline{P}$ is a touchpoint for the Cartier operator. Indeed, by \cite[Corollary 5.11]{Oda} there is an isomorphism
$$\frac{V M}{p M} \cong \HH^0(\overline{C}, \Omega^1_{\overline{C}})$$
that identifies $V$ on the left with the Cartier operator on the right. Therefore, the filtration
$$\ker(\mathcal{C})\subset \ker(\mathcal{C}^2) \subset \HH^0(\overline{C}, \Omega^1_{\overline{C}})$$
is given by
$$\lspan(m_3) \subset \lspan(m_2,m_3) \subset \lspan(m_1, m_2, m_3)\,.$$ 
On the other hand, let us look at the filtration $V_2\subset V_1 \subset V_0$ associated to $\overline{P}$, i.e.
$$\HH^0\left(\overline{C}, \Omega_{\overline{C}}(-2 \overline{P}-2 \tau(\overline{P}) \right)\subset \HH^0\left(\overline{C}, \Omega_{\overline{C}}(- \overline{P}-\tau(\overline{P}) \right)\subset \HH^0\left(\overline{C}, \Omega_{\overline{C}} \right)\,.$$
Definition \ref{DefCMadapt} implies that $V_2$ contains the eigenspace where $R$ acts via the map correponding to the embedding $\iota_2$ (a priori we have such an inclusion on the generic fiber, but it passes to the specialization). Therefore, we see that the class of $m_3$ must be contained in $V_2$ because $m_3$ generates that eigenspace. Similarly we get $m_2 \in V_1$. This shows that $\overline{P}$ is a touchpoint for the Cartier operator on $\overline{C}$.
\end{proof}
\begin{cor}
For every prime number $p$ such that
$$p\equiv \pm 2 \mod 7,\, p \equiv 3 \mod 4$$
the hyperelliptic locus and the supersingular locus intersect non-transversally in $\mathfrak{A}_3\times \mathbb{F}_p$.
\end{cor}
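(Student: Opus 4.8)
The plan is to apply Theorem~\ref{ThmCMRed} to the explicit CM curve of Example~\ref{ExmplTouch}, namely $C\colon y^2=x^7+7x^5+14x^3+7x$ over $\mathbb{Q}$, together with the point $P=\pi((0,0))$. By Proposition~\ref{PropTouch}, $C$ is hyperelliptic of genus $3$, $\Jac(C)$ has CM by $K=\mathbb{Q}(\zeta_7+\zeta_7^{-1},i)$ with CM-order $R$ maximal away from $2$, and $P$ is a touchpoint for the CM-action; in particular condition ii) of Definition~\ref{DefCMadapt} holds. Since $\mathbb{Q}(\zeta_7+\zeta_7^{-1})$ and $\mathbb{Q}(i)$ are both abelian over $\mathbb{Q}$, the field $K$ is itself Galois, so in the notation preceding Theorem~\ref{ThmCMRed} we have $L=K$ and $G=\Gal(K/\mathbb{Q})\cong\mathbb{Z}/3\times\mathbb{Z}/2\cong\mathbb{Z}/6$. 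First I would fix a number field $M\supseteq K$ over which $C$, all endomorphisms of $\Jac(C)$, and $P$ are defined.

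Next, fix a prime $p$ with $p\equiv\pm2\pmod 7$ and $p\equiv3\pmod4$, a prime $\mathfrak{P}$ of $M$ above $p$, and $\mathfrak{p}=\mathfrak{P}\cap K$. The congruences force $p\notin\{2,7\}$. Since the defining polynomial $x^7+7x^5+14x^3+7x$ has discriminant supported on $\{2,7\}$, the curve $C$ has good reduction at $\mathfrak{P}$; and since $R$ is maximal away from $2$ while $\disc(\mathcal{O}_K)$ is a product of powers of $2$ and $7$, the quantity $\disc(R)$ is supported on $\{2,7\}$, so $p\nmid\disc(R)$. This disposes of the standing hypotheses of Theorem~\ref{ThmCMRed}.

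The main step is to verify the Frobenius condition $\text{per}(\Frob_{\mathfrak{p}})=(123456)^{-1}$. Here I would use the explicit ordering $\iota_0,\iota_1,\iota_2,\overline{\iota_0},\overline{\iota_1},\overline{\iota_2}$ of $\Hom(K,\mathbb{C})$ coming from the eigenvalue computation in the proof of Proposition~\ref{PropTouch}: $\alpha^*$ acts on $\omega_0,\omega_1,\omega_2$ by $\zeta_7^3+\zeta_7^{-3}$, $\zeta_7^2+\zeta_7^{-2}$, $\zeta_7+\zeta_7^{-1}$, and $\mathsf{i}^*$ acts by $-i$, $i$, $-i$, which identifies each $\iota_j$ (and its conjugate) with a pair (value of $\zeta_7+\zeta_7^{-1}$, value of $i$). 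Because $p\equiv\pm2\pmod7$, $\Frob_{\mathfrak{p}}$ sends $\zeta_7+\zeta_7^{-1}\mapsto\zeta_7^{2}+\zeta_7^{-2}$ (the sign ambiguity is harmless since $\zeta_7^{-2}+\zeta_7^{2}=\zeta_7^{2}+\zeta_7^{-2}$), and because $p\equiv3\pmod4$ it sends $i\mapsto -i$; in particular $\Frob_{\mathfrak{p}}$ is one fixed generator of $G$ for every admissible $p$. Composing this element with the six embeddings above and matching against the list, one finds that $\text{per}(\Frob_{\mathfrak{p}})$ shifts the ordered tuple backwards by one, i.e.\ equals $(123456)^{-1}$. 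I expect this bookkeeping — keeping the labeling and the direction (Frobenius versus its inverse) straight so as to land on exactly $(123456)^{-1}$ rather than merely some $6$-cycle — to be the only delicate point of the argument.

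With all hypotheses checked, Theorem~\ref{ThmCMRed} yields that the reduction $\overline{C}$ is supersingular with $a(\Jac(\overline{C}))=1$ and that $\overline{P}$ is a touchpoint for the Cartier operator on $\overline{C}$; Theorem~\ref{ThmTransa1} then gives that $\mathcal{S}_3$ meets $\mathfrak{H}_3$ non-transversally at $[\Jac(\overline{C})]$ inside $\mathfrak{A}_3\times\mathbb{F}_p$. Finally, by the Chinese Remainder Theorem the conditions $p\equiv\pm2\pmod7$ and $p\equiv3\pmod4$ amount to $p\equiv19$ or $23\pmod{28}$, so Dirichlet's theorem on primes in arithmetic progressions produces infinitely many such $p$ (consistent with the primes $19,23,47,67,71,79$ appearing in the table above). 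This proves the corollary.
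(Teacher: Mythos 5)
Your proposal is correct and follows essentially the same route as the paper: reduce the CM curve of Example~\ref{ExmplTouch}, use Proposition~\ref{PropTouch} to secure the hypotheses (good reduction away from $2,7$, $p\nmid\disc(R)$ since the index of $R$ in $\mathcal{O}_K$ is a $2$-power, touchpoint for the CM-action), check $\text{per}(\Frob_{\mathfrak{p}})=(123456)^{-1}$ for $p\equiv\pm2\bmod 7$, $p\equiv 3\bmod 4$, and conclude via Theorems~\ref{ThmCMRed} and~\ref{ThmTransa1}. Your explicit Frobenius bookkeeping (which the paper only asserts) is consistent with the eigenvalue data of Proposition~\ref{PropTouch} and with the paper's remark that $p\equiv\pm3\bmod 7$ yields $(123456)$ instead; the concluding appeal to Dirichlet is superfluous for the statement but harmless.
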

\begin{proof}
The curve $C$ from Example \ref{ExmplTouch} is defined over $\mathbb{Q}$ and has good reduction at every prime different from $2,\, 7$. Furthermore $\Jac(C)$ has CM by $K=\mathbb{Q}(\zeta_7+\zeta_7^{-1}, i)$. Let $R=\End_{\mathbb{C}}(\Jac(C))\cap K$ be the CM-order. By Proposition \ref{PropTouch} the index of $R$ in $\mathcal{O}_K$ must be a power of $2$.
\par Now Proposition \ref{PropTouch} implies that $C$ satisfies the assumptions of Theorem \ref{ThmCMRed}. Let $p$ be a prime that satisfies the congruences $p\equiv \pm 2 \mod 7,\, p \equiv 3 \mod 4$. Then $p$ is unramified in $L$, which equals $K$ in our case. Furthermore $\text{per}(\Frob_{\mathfrak{p}})=(123456)^{-1}$ for any prime $\mathfrak{p}$ of $L$ lying over $p$. Thus with Theorem \ref{ThmCMRed} and Theorem \ref{ThmTransa1} we conclude that $\mathfrak{H}_3$ and $\mathcal{S}_3$ intersect non-transversally in the moduli point $[\overline{C}]$.
\end{proof}
It is worthwile to remark that the curve $C$ from Example \ref{ExmplTouch} also has good supersingular reduction at the primes satisfying
$$p\equiv \pm 3 \mod 7,\, p \equiv 3 \mod 4\,.$$
However, in this case the assumptions of Theorem \ref{ThmCMRed} are not satisfied because one has $\text{per}(\Frob_{\mathfrak{p}})=(123456)$ instead of $(123456)^{-1}$. The proof of the theorem still gives an explicit description of the Cartier operator and shows that $a=1$. However, the criterion iii) of Theorem \ref{ThmTransa1} is never satisfied, i.e., there is no touchpoint for the Cartier operator. As a consequence the moduli point $[\overline{C}]$ is a point of transversal interection when the congruences
$p\equiv \pm 3 \mod 7,\, p \equiv 3 \mod 4$ hold.
\subsection{One example with $a=3$}
The curve $C: y^2=x^8-1$ over $\mathbb{F}_7$ is supersingular and satisfies $a(\Jac(C))=3$ (this can be checked by computing the Hasse-Witt matrix, for example). As a consequence of Lemma \ref{LemIrredComp} the formal completion of $\mathcal{S}_3$ at $[C]$ has $p^5+p^2+1=16857$ irreducible components. One can show that $p^2=49$ of these have the property that they intersect $\mathfrak{H}_3$ non-transversally at the moduli point $[C]$.
\par To prove this, it does not suffice to compute the Cartier-Manin matrix as one would do in the $a=1$ case. In general one must know the matrix of Frobenius on crystalline cohomology mod $p^2$, i.e., $H^1_{\text{crys}}(C/W_2)$. But, if $C$ is defined over $\mathbb{F}_{p^2}$, as in the example, it suffices to look at the matrix of Frobenius acting on $H^1_{\text{dR}}$.
\par These matrices can be computed with Kedlaya's algorithm \cite{Kedlaya}. The computations related to this example were carried out with the aid of \texttt{SAGE} \cite{Sage}. The source code is available on the author's webpage \cite{Source}.
\sloppy
\bibliographystyle{plain}
\bibliography{bibl.bib}
\end{document}